\documentclass[10pt,oneside,english,final]{amsart}
\usepackage[T1]{fontenc}
\usepackage[latin9]{inputenc}
\usepackage{geometry}
\geometry{verbose,tmargin=3cm,bmargin=3cm,lmargin=3cm,rmargin=3cm,headheight=3cm,headsep=3cm,footskip=3cm}
\usepackage{amsthm}
\usepackage{amstext}
\usepackage{amssymb}
\usepackage{esint}
\usepackage[numbers]{natbib}

\makeatletter
\numberwithin{equation}{section}
\numberwithin{figure}{section}
\theoremstyle{plain}
\newtheorem{thm}{\protect\theoremname}
  \theoremstyle{plain}
  \newtheorem{fact}[thm]{\protect\factname}
  \theoremstyle{remark}
  \newtheorem{rem}[thm]{\protect\remarkname}
  \theoremstyle{definition}
  \newtheorem{defn}[thm]{\protect\definitionname}
  \theoremstyle{plain}
  \newtheorem{cor}[thm]{\protect\corollaryname}
  \theoremstyle{plain}
  \newtheorem{lem}[thm]{\protect\lemmaname}

\usepackage{color}
\usepackage[unicode=true,
 bookmarks=false,
 breaklinks=false,pdfborder={0 0 1},backref=false,colorlinks=false]
 {hyperref}
\usepackage[notref,notcite]{showkeys} 

\makeatother

\usepackage{babel}
  \providecommand{\corollaryname}{Corollary}
  \providecommand{\definitionname}{Definition}
  \providecommand{\factname}{Fact}
  \providecommand{\lemmaname}{Lemma}
  \providecommand{\remarkname}{Remark}
\providecommand{\theoremname}{Theorem}

\begin{document}
\global\long\def\intr{\int_{R}}
 \global\long\def\sbr#1{\left[ #1\right] }
\global\long\def\cbr#1{\left\{  #1\right\}  }
\global\long\def\rbr#1{\left(#1\right)}
\global\long\def\ev#1{\mathbb{E}{#1}}
\global\long\def\R{\mathbb{R}}
\global\long\def\norm#1#2#3{\Vert#1\Vert_{#2}^{#3}}
\global\long\def\pr#1{\mathbb{P}\rbr{#1}}
\global\long\def\cleq{\lesssim}
\global\long\def\ceq{\eqsim}
\global\long\def\conv{\rightarrow}
\global\long\def\Var#1{\text{Var}(#1)}
\global\long\def\TDD#1{{\color{red}To\, Do(#1)}}
\global\long\def\dd#1{\textnormal{d}#1}
\global\long\def\inti{\int_{0}^{\infty}}
\global\long\def\crr{\mathcal{C}([0,\infty),\R)}
\global\long\def\sb#1{\langle#1\rangle}
\global\long\def\pm#1{d_{P}\rbr{#1}}
\global\long\def\crt{\mathcal{C}([0,T],\R)}
\global\long\def\nuu{\nu_{n;\lambda}}
\global\long\def\ZZ{Z_{\Lambda_{n}}}
\global\long\def\PP{\mathbb{P}_{\Lambda_{n}}}
\global\long\def\EE{\mathbb{E}_{\Lambda_{n}}}
\global\long\def\LL{\Lambda_{n}}
\global\long\def\AA{\mathcal{A}}
\global\long\def\evx{\mathbb{E}_{x}}
\global\long\def\pin#1{1_{\cbr{#1\in\mathcal{A}}}}
\global\long\def\Zd{\mathbb{Z}^{d}}
\global\long\def\TT{T_{n}}
\global\long\def\ZZa{Z_{n;\epsilon}}
\global\long\def\emax{\bar{\epsilon}}
\global\long\def\emin{\underbar{\ensuremath{\epsilon}}}
\global\long\def\estar{\epsilon^{*}}
\global\long\def\ee{\mathbf{e}}
\global\long\def\ddp#1#2{\langle#1,#2\rangle}
\global\long\def\intc#1{\int_{0}^{#1}}
\global\long\def\T#1{\mathcal{P}_{#1}}
\global\long\def\ii{\mathbf{i}}
\global\long\def\star#1{\left.#1^{*}\right.}
\global\long\def\pspace{\mathcal{C}}
\global\long\def\eq{\varphi}
\global\long\def\grad{\text{grad}}
\global\long\def\var{\text{Var}}
\global\long\def\fMeas{\mathcal{M}_{F}(\mathbb{R}^{d})}

\title{Spatial CLT for the supercritical Ornstein-Uhlenbeck superprocess}

\author{Piotr Mi\l{}o\'{s}\\
Faculty of Mathematics, Informatics and Mechanics, University of Warsaw}

\thanks{This research was partially supported by a Polish Ministry of Science
grant N N201 397537 and by the British Council Young Scientists Programme.}

\date{02.01.2011}
\begin{abstract}
In this paper we consider a superprocess being a measure-valued diffusion
corresponding to the equation $u_{t}=Lu+\alpha u-\beta u^{2}$, where
$L$ is the infinitesimal operator of the \emph{Ornstein-Uhlenbeck
process} and $\beta>0,\:\alpha>0$. The latter condition implies that
the process is \emph{supercritical, }i.e. its total mass grows exponentially.
This system is known to fulfill a law of large numbers. In the paper
we prove the corresponding \emph{central limit theorem}. The limit
and the CLT normalization fall into three qualitatively different
classes. In what we call the small growth rate case the situation
resembles the classical CLT. The weak limit is Gaussian and the normalization
is the square root of the size of the system. In the critical case
the limit is still Gaussian, however the normalization requires an
additional term. Finally, when the growth rate is large the situation
is completely different. The limit is no longer Gaussian, the normalization
is substantially larger than the classical one and the convergence
holds in probability. These different regimes arise as a result of
{}``competition'' between spatial smoothing due to the particles'
movement and the system's growth which is local.

We prove also that the spatial fluctuations are asymptotically independent
of the fluctuations of the total mass of the process (which is a continuous
State Branching Processes). 
\\
MSC: primary 60F05; 60J80 secondary 60G20 \\
Keywords: Supercritical branching diffusion, Central limit theorem.
\end{abstract}
\maketitle

\section{Introduction}

\subsection{Model}

Let $\cbr{\T t}_{t\geq0,}$ be the semigroup of the Ornstein-Uhlenbeck
process in $\R^{d}$ i.e. the one with the infinitesimal operator
\begin{equation}
L:=\frac{1}{2}\sigma^{2}\Delta-\mu x\circ\nabla,\quad\sigma>0,\:\mu>0,\label{eq:infinitesimalOP}
\end{equation}
where $\circ$ denotes the standard scalar product. Abusing the notation
we will denote both the invariant distribution (of $L$) and its density
with the same symbol, namely
\begin{equation}
\eq(x)=\rbr{\frac{\mu}{\pi\sigma^{2}}}^{d/2}\exp\rbr{-\frac{\mu}{\sigma^{2}}\norm x{}2}.\label{eq:equilibrium}
\end{equation}
In this paper we will study the behavior of the superprocess $\cbr{X_{t}}_{t\geq0}$
with semigroup $\T{}$ and the branching mechanism $\psi$ given by
\begin{equation}
\psi(\lambda)=-\alpha\lambda+\beta\lambda^{2},\quad\alpha\in\R,\beta>0.\label{eq:branchingMechnism}
\end{equation}
Its behavior is described by a Markovian evolution; that is, for each
$\nu\in\fMeas$ being the set of finite, compactly supported measures,
by $\mathbb{P}_{\nu}$ we denote the the law of $X$ with initial
condition $\nu$. Let $f\in bp(\mathbb{R}^{d})$ (bounded, positive
and measurable functions on $\R^{d}$) . The following equation defines
the evolution (see \citep{Dynkin:1993sw}, \citep{Dynkin:2002ry}
for more details). 
\begin{equation}
-\log\mathbb{E}_{\nu}(e^{-\langle f,X_{t}\rangle})=\int_{\mathbb{R}^{d}}u_{f}(x,t)\nu({\rm d}x),\quad t\geq0,\label{eq:logLaplace}
\end{equation}
 where $u_{f}(x,t)$ is the unique non-negative solution to the integral
equation 
\begin{equation}
u_{f}(x,t)=\T tf(x)-\int_{0}^{t}\T s[\psi(u_{f}(\cdot,t-s))](x)\dd s.\label{eq:integralEq}
\end{equation}
In the paper we are interested only in the \emph{supercritical} case,
i.e. the case when the total mass of the superprocess grows (exponentially).
This is ensured by the condition

\[
\alpha>0,
\]
which we assume in the whole paper. The above definition is rather
abstract but superprocess have a natural interpretation as the short
life time and high density diffusion limit of a branching particle
systems (see e.g. Introduction of \citep{Englnder:2006wm} and Remark
\ref{rem:BPSapproximation}). In our case the branching particle counterpart
is the Ornstein-Uhlenbeck branching system considered in \citep{Adamczak:2011kx,Adamczak:2011fk}.
We will comment on this connection later on in Remark \ref{rem:BPSapproximation}.

\subsection{Results}

The expectation of the total mass of the system grows exponentially
fast at a rate $\alpha$. Furthermore, the system fulfills the law
of large numbers \citep[Theorem 1]{Englnder:2006wm}, i.e. for a bounded
continuous function $f$ we have 
\[
\lim_{t\conv+\infty}e^{-\alpha t}\ddp{X_{t}}f=\ddp{\varphi}fV_{\infty},\quad\text{in probability},
\]
where $V_{\infty}$ is a random variable, to be defined later (let
us note that the results of \citep{Englnder:2006wm} holds for a larger
class of branching diffusions). The goal of our paper is to prove
the corresponding \emph{central limit theorem}. The second order behavior
depends qualitatively on the sign of $\alpha-2\mu$. Roughly speaking
this condition reflects the interplay of two antagonistic forces,
the growth which is local and makes the system more coarse and the
smoothing introduced by the spatial evolution corresponding to the
OU-process. We will now describe this behavior in more detail. The
main object of our interest are the spatial fluctuations given by:
\[
F_{t}^{-1}\rbr{\ddp{X_{t}}f-|X_{t}|\ddp{\eq}f},
\]
where $F_{t}$ is some norming, not necessarily deterministic, and
$|X_{t}|:=\ddp{X_{t}}1$ is the total mass of the system. We will
describe the situation on the set where the process is not extinguished
$Ext^{c}$ (to be defined later). The results split into three classes
\begin{description}
\item [{Small~growth~rate~$\alpha<2\mu$}] Our main result is contained
in Theorem \ref{thm:smallBranching}. In this the case ``the movement
part prevails'' and the result resembles the standard CLT. The normalization
is given by $F_{t}=|X_{t}|^{1/2}$ (which is of order $e^{-(\alpha/2)t}$).
Moreover, we obtain the limit which is Gaussian (though its variance
is given by a complicated formula) and does not depend on the starting
configuration. 
\item [{Critical~growth~rate~$\alpha=2\mu$}] Our main result is contained
in Theorem \ref{thm:criticalBranching}. In this case ``the growth
prevails''. The behavior of the fluctuations slightly diverge from
the classical setting. The normalization is bigger: $F_{t}=t^{1/2}|X_{t}|^{1/2}$.
The limit still does not depend on the starting condition and is Gaussian
but its variance depends on the derivatives of $f$. To explain this
we notice that the growth is so fast that the fluctuations are not
smoothed by the motion and become essentially local. In consequence
they give rise to a {}``spatial white noise'' and larger normalization
is required.
\item [{Large~growth~rate~$\alpha>2\mu$}] Our main result is contained
in Theorem \ref{thm:fastBranching}. In this case not only does the
growth ``prevail'' but also ``the motion fails to make any smoothing''.
The normalization is even bigger: $F_{t}=e^{(\alpha-\mu)t}$ and we
have $\alpha-\mu>\alpha/2$. The limit is no longer Gaussian, it is
given by $\ddp f{\grad\varphi}\circ\tilde{H}_{\infty}$ (where $\tilde{H}_{\infty}$
is the limit of a certain martingale). What is perhaps surprising,
the limit holds in probability. The first term, $\ddp f{\grad\varphi}$,
means that like in the critical situation the growth is fast enough
to produce some sort of a white noise. Even more, it is so fast that
the limit depends on the starting condition and in fact, up to some
extent, the system ``remembers its whole evolution'', which is encoded
in $\tilde{H}_{\infty}$. 
\end{description}
In either case we prove also that the spatial fluctuations become
independent of fluctuations of the total mass as time increases.

\subsection{Comments}

This paper is a superprocess counterpart of \citep{Adamczak:2011kx}.
Namely, $X$ can be defined as a limit of the Ornstein-Uhlenbeck particle
systems considered in \citep{Adamczak:2011kx} (see Remark \ref{rem:BPSapproximation}).
It turns out that qualitatively the first and second order behavior
are very similar (the reader aware of \citep{Adamczak:2011kx} easily
notices that the list above is very similar to the one in \citep{Adamczak:2011kx}).
Related problems for branching particle systems were considered also
in \citep{Adamczak:2011fk,Bansaye:2009cl} however we are not familiar
of any results in this direction concerning superprocesses. For general
information about superprocesses we refer to books \citep{Dynkin:1994aa,Etheridge:2000fe,Gall:1999pi}
and also to the references given in Introduction of \citep{Adamczak:2011kx}.

Let us now comment on the methodology. Our proofs hinge on the backbone
representation obtained recently in \citep{Berestycki:2011kx} which
allows us to reuse many concepts developed in \citep{Adamczak:2011kx}.
This together with analytical estimation of the behavior of $\T{}$
is enough to prove our results. A more detailed description of the
proof startegy is in Remark \ref{rem:proofStrategy}.

The article is organized as follows. The next section presents notation
and basic fact required further. Section \ref{sec:Results} is devoted
to presentation of results. Proofs are deferred to Sections \ref{sec:Proof-Preliminaires}-\ref{sec:criticalBranching}
and the Appendix.

\section{Preliminaries and notation\label{sec:Preliminareis}}

Let us first recall the notions which appeared in Introduction. $\T{}$
is the semigroup corresponding to \eqref{eq:infinitesimalOP}. $\mathcal{M}_{F}$
is the space of finite, compactly supported measures and $bp(\R^{d})$
is the space of bounded, positive and measurable functions on $\R^{d}$. 

We use $\ddp f{\nu}:=\int_{\R^{d}}f(x)\nu(\dd x)$. We denote the
total mass of the measure $\nu$ by $|\nu|:=\ddp 1{\nu}$. 

By $x\cleq y$ we will denote the fact that $x\leq Cy$ for a certain
constant $C>0$ (which exact value is not relevant to following calculations). 

We will use 
\begin{multline*}
\pspace{}=\pspace{}(\R^{d}):=\left\{ f:\R^{d}\mapsto\R:f\,\text{ is continuous and }\right.\\
\left.\text{there exists }n\text{ such that }|f(x)|/\norm x{}n\conv0\text{ as }\norm x{}{}\conv+\infty\right\} ,
\end{multline*}
that is the space of continuous functions which grow at most polynomially.
To shorten the notation we introduce $\cbr{\T t^{\alpha}}_{t\geq0}$
by
\[
\T t^{a}f(x):=e^{at}\T tf(x).
\]
We may rewrite equation on $u_{f}$ as 
\begin{equation}
u_{f}(x,t)=\T t^{\alpha}f(x)-\beta\int_{0}^{t}\T s^{\alpha}[u_{f}(\cdot,t-s)^{2}](x)\dd s.\label{eq:integralEquation}
\end{equation}
By $Ext$ we denote the event that the process is extinguished, i.e.
\begin{equation}
Ext:=\cbr{\lim_{t\conv+\infty}|X_{t}|=0}.\label{eq:extenquished}
\end{equation}
We denote also process $\cbr{V_{t}}_{t\geq0}$ by 
\begin{equation}
V_{t}:=e^{-\alpha t}|X_{t}|,\label{eq:martingaleDef}
\end{equation}
The gather the basic facts of process $V$ which will be useful in
the formulation of the main results
\begin{fact}
\label{fac:totalMassMartingale}Let $\cbr{X_{t}}_{t\geq0}$ be the
OU-superprocess starting from $\nu\in\fMeas$ and let $\cbr{V_{t}}_{t\geq0}$
be defined according to \eqref{eq:martingaleDef}. Then $V$ with
it natural filtration is a (positive) martingale. It converges 
\begin{equation}
V_{\infty}:=\lim_{t\conv+\infty}V_{t},\quad\text{a.s. and in }L^{2}.\label{eq:Vlimit}
\end{equation}
Moreover, $\cbr{V_{\infty}=0}=Ext$, $\mathbb{P}_{\nu}(Ext)=\exp\cbr{-|\nu|\frac{\alpha}{\beta}}$
and the law of $V_{\infty}$ can be described by 
\begin{equation}
V_{\infty}=^{d}\sum_{i=1}^{N}E_{i},\label{eq:LimitRep}
\end{equation}
where $N$ is a Poisson random variable with parameter $|\nu|\frac{\alpha}{\beta}$
and $E_{1},E_{2},\ldots$ is an i.i.d sequence of exponential random
variables with parameter $\alpha/\beta$, which are independent of
$N$. We also have
\begin{equation}
\sigma_{V}^{2}:=Var(V_{\infty}):=\frac{2\beta}{\alpha}|\nu|.\label{eq:varianceMaringaleLimit}
\end{equation}

\end{fact}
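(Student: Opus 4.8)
The plan is to exploit the fact that, because the branching mechanism $\psi$ is spatially homogeneous and the Ornstein--Uhlenbeck semigroup fixes constants ($\T t 1\equiv 1$), the total mass $\cbr{|X_t|}_{t\geq0}$ is a one-dimensional continuous state branching process with branching mechanism $\psi$, whose one-dimensional marginals are completely explicit; everything in the statement then follows from elementary computations with Laplace transforms.

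First I would put $f\equiv\theta$, $\theta\geq0$, in \eqref{eq:logLaplace}--\eqref{eq:integralEq}. Since $\T s \theta\equiv\theta$, the solution $u_\theta(\cdot,t)$ of \eqref{eq:integralEq} does not depend on the space variable and the integral equation collapses to the Bernoulli ODE $u_\theta'(t)=-\psi(u_\theta(t))=\alpha u_\theta(t)-\beta u_\theta(t)^2$, $u_\theta(0)=\theta$, with explicit solution
\[
u_\theta(t)=\frac{\alpha\theta e^{\alpha t}}{\alpha+\beta\theta(e^{\alpha t}-1)},\qquad\text{so that}\qquad\mathbb{E}_{\nu}(e^{-\theta|X_t|})=\exp\rbr{-|\nu|u_\theta(t)}.
\]
Differentiating in $\theta$ at $\theta=0$ gives $\mathbb{E}_{\nu}|X_t|=|\nu|e^{\alpha t}$ and $\mathbb{E}_{\nu}|X_t|^2=|\nu|^2e^{2\alpha t}+\tfrac{2\beta}{\alpha}|\nu|\,e^{\alpha t}(e^{\alpha t}-1)$.

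Next I would obtain the martingale property and $L^{2}$-convergence. By the Markov property at time $s$ together with \eqref{eq:logLaplace} and the fact that $u_\theta(\cdot,t)$ is the constant $u_\theta(t)$, $\mathbb{E}_{\nu}(e^{-\theta|X_{s+t}|}\mid\mathcal{F}_s)=\mathbb{E}_{X_s}(e^{-\theta|X_t|})=\exp(-u_\theta(t)|X_s|)$, hence $\mathbb{E}_{\nu}(|X_{s+t}|\mid\mathcal{F}_s)=e^{\alpha t}|X_s|$, i.e. $V_t=e^{-\alpha t}|X_t|\geq0$ is a martingale for the natural filtration, with $\mathbb{E}_{\nu}V_t=|\nu|$. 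Moreover $\mathbb{E}_{\nu}(V_t^2)=e^{-2\alpha t}\mathbb{E}_{\nu}|X_t|^2=|\nu|^2+\tfrac{2\beta}{\alpha}|\nu|(1-e^{-\alpha t})\leq|\nu|^2+\tfrac{2\beta}{\alpha}|\nu|$, so $V$ is $L^2$-bounded and therefore converges a.s. and in $L^2$ to some $V_\infty$, which is \eqref{eq:Vlimit}; letting $t\conv\infty$ in the variance formula yields $\var(V_\infty)=\tfrac{2\beta}{\alpha}|\nu|$, which is \eqref{eq:varianceMaringaleLimit}. Then I would identify the law of $V_\infty$: by $L^1$-convergence (or bounded convergence, as $e^{-\theta V_t}\leq1$), $\mathbb{E}_{\nu}(e^{-\theta V_\infty})=\lim_{t\conv\infty}\mathbb{E}_{\nu}(e^{-\theta e^{-\alpha t}|X_t|})=\lim_{t\conv\infty}\exp(-|\nu|u_{\theta e^{-\alpha t}}(t))$, and substituting $\theta e^{-\alpha t}$ into the formula above gives $u_{\theta e^{-\alpha t}}(t)=\tfrac{\alpha\theta}{\alpha+\beta\theta(1-e^{-\alpha t})}\conv\tfrac{\alpha\theta}{\alpha+\beta\theta}$, whence $\mathbb{E}_{\nu}(e^{-\theta V_\infty})=\exp\!\rbr{-\tfrac{|\nu|\alpha\theta}{\alpha+\beta\theta}}$. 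A one-line computation of the Laplace transform of a compound Poisson sum shows this equals $\mathbb{E}\,\exp(-\theta\sum_{i=1}^{N}E_i)$ with $N$ Poisson$(|\nu|\alpha/\beta)$ and the $E_i$ i.i.d. Exp$(\alpha/\beta)$ independent of $N$, giving \eqref{eq:LimitRep}; letting $\theta\conv\infty$ yields $\mathbb{P}_{\nu}(V_\infty=0)=\exp(-|\nu|\alpha/\beta)$.

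Finally I would check $Ext=\cbr{V_\infty=0}$, which also pins down $\mathbb{P}_{\nu}(Ext)$. On $Ext$ one has $|X_t|\conv0$, so $V_t\conv0$ and thus $Ext\subseteq\cbr{V_\infty=0}$. For the converse I would use that $0$ is absorbing for $\cbr{|X_t|}$ (because $\mathbb{E}_0(e^{-\theta|X_t|})=1$, so mass $0$ stays $0$ by the Markov property), whence $\cbr{\exists t:|X_t|=0}\subseteq Ext$, and that $\mathbb{P}_{\nu}(|X_t|=0)=\lim_{\theta\to\infty}\exp(-|\nu|u_\theta(t))=\exp\!\rbr{-|\nu|\tfrac{\alpha e^{\alpha t}}{\beta(e^{\alpha t}-1)}}\uparrow\exp(-|\nu|\alpha/\beta)$ as $t\conv\infty$; combining these, $\exp(-|\nu|\alpha/\beta)=\mathbb{P}_{\nu}(\exists t:|X_t|=0)\leq\mathbb{P}_{\nu}(Ext)\leq\mathbb{P}_{\nu}(V_\infty=0)=\exp(-|\nu|\alpha/\beta)$, so all of these coincide, $\mathbb{P}_{\nu}(Ext)=\exp(-|\nu|\alpha/\beta)$, and $\mathbb{P}_{\nu}\bigl(Ext\,\triangle\,\cbr{V_\infty=0}\bigr)=0$. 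All the computations here are routine; the only slightly delicate point is this last step --- showing that $Ext$ genuinely coincides with $\cbr{V_\infty=0}$ and not merely that one is contained in the other --- and it is resolved exactly by the absorption of $0$ together with the explicit extinction probability just computed.
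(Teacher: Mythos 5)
Your proposal is correct and follows essentially the same route as the paper: take $f\equiv\theta$ so that, because the OU semigroup fixes constants, the integral equation \eqref{eq:integralEq} collapses to the Riccati ODE for the total-mass CSBP, solve it explicitly, and read off the martingale property, the moments (hence $L^{2}$-boundedness and \eqref{eq:varianceMaringaleLimit}), and the limiting Laplace transform $\exp(-|\nu|\alpha\theta/(\alpha+\beta\theta))$ from which \eqref{eq:LimitRep} follows. Your identification $Ext=\cbr{V_{\infty}=0}$ via the absorption of $0$ and the sandwich $\exp(-|\nu|\alpha/\beta)=\mathbb{P}_{\nu}(\exists t:|X_{t}|=0)\leq\mathbb{P}_{\nu}(Ext)\leq\mathbb{P}_{\nu}(V_{\infty}=0)=\exp(-|\nu|\alpha/\beta)$ is in fact a bit more careful than the paper's terse appeal to the extinction/explosion dichotomy, and the compound-Poisson Laplace-transform check is exactly the verification the paper only asserts can be done.
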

The proof of the fact is delegated to Appendix.
\begin{rem}
Although representation \eqref{eq:LimitRep} may seem a little peculiar
at the moment it will appear naturally once we introduce the backbone
decomposition in Section \ref{sub:Backbone-construction}.
\end{rem}

\section{Results\label{sec:Results}}

In this section we present the results of our paper. We split the
section into three parts corresponding to each case listed in Introduction

\subsection{Slow growth  $\alpha<2\mu$.\protect \\
}

Let us denote $\tilde{f}(x):=f(x)-\ddp{\varphi}f$ and 
\begin{equation}
\sigma_{f}^{2}:=\frac{\beta}{\alpha}\int_{0}^{\infty}e^{-\alpha s}\ddp{\varphi}{2\beta\rbr{\T s^{\alpha}\tilde{f}(\cdot)}^{2}-2\beta\rbr{\T s^{-\alpha}\tilde{f}(\cdot)}^{2}+4\alpha\beta u(\cdot,s)}\dd s,\label{eq:sigmaDefinition}
\end{equation}
where $u(x,s)=\intc s\T{s-u}^{-\alpha}\sbr{\rbr{\T u^{-\alpha}\tilde{f}(\cdot)}^{2}}(x)\dd u.$

The main result of this section is
\begin{thm}
\label{thm:smallBranching}Let $\cbr{X_{t}}_{t\geq0}$ be the OU-superprocess
starting from $\nu\in\fMeas$. Let us assume $\alpha<2\mu$ and $f\in\pspace(\R^{d})$.
Then $\sigma_{f}<+\infty$ and conditionally on set $Ext^{c}$ there
is the convergence
\[
\rbr{e^{-\alpha t}|X_{t}|,\frac{|X_{t}|-e^{\alpha t}V_{\infty}}{\sqrt{|X_{t}|}},\frac{\ddp{X_{t}}f-|X_{t}|\ddp f{\varphi}}{\sqrt{|X_{t}|}}}\rightarrow^{d}(\tilde{V}_{\infty},G_{1},G_{2}),\quad\text{as }t\conv+\infty,
\]
where $G_{1}\sim\mathcal{N}(0,\frac{2\beta}{\alpha}),G_{2}\sim\mathcal{N}(0,\sigma_{f}^{2})$
and $\tilde{V}_{\infty}$ is $V_{\infty}$ conditioned on $Ext^{c}$.
Moreover, the variables $\tilde{V}_{\infty},G_{1},G_{2}$ are independent
.
\end{thm}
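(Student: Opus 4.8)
The proof will rest on the backbone (skeleton) decomposition of \citep{Berestycki:2011kx}. Under it, $X$ issued from $\nu\in\fMeas$ has the law of a \emph{dressed skeleton}: a supercritical binary branching Ornstein--Uhlenbeck particle system $Z$ (the skeleton), branching at rate $\alpha$ and issued from a Poisson random measure of intensity $(\alpha/\beta)\nu$, onto which one immigrates --- at the initial points, continuously along the skeleton trajectories, and at the branch points --- independent copies of the superprocess conditioned on extinction. The conditioned--on--extinction process is the subcritical superprocess with branching mechanism $\psi^{\dagger}(\lambda)=\alpha\lambda+\beta\lambda^{2}$, and this is exactly why the semigroups $\T{}^{-\alpha}$ (and the quantity $u(\cdot,s)$ built from them) enter \eqref{eq:sigmaDefinition}, alongside the $\T{}^{\alpha}$--term which will come from the skeleton itself. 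Three features of this picture will be used: (a) $Ext^{c}$ coincides with the event that the Poisson number $N$ of founding skeleton particles is positive, so conditioning on $Ext^{c}$ is transparent; (b) the founding particles give the decomposition $V_{\infty}=\sum_{i=1}^{N}E_{i}$ of Fact \ref{fac:totalMassMartingale}, and more precisely $V_{\infty}$ is a deterministic multiple of the intrinsic martingale limit $W$ of $Z$ (the immigrated mass self--averages given the skeleton), which is how the prefactor $\beta/\alpha$ will appear; (c) conditionally on the $\sigma$--field $\mathcal{G}_{s}$ generated by the skeleton and the pre--$s$ immigration up to a fixed time $s$, $X_{t}$ splits as a sum, over the $|Z_{s}|$ skeleton particles alive at time $s$, of \emph{conditionally independent} dressed subprocesses $X^{(i)}_{t-s}$, plus the contribution of the finitely--old blobs immigrated before $s$.

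Write $\tilde f=f-\ddp{\varphi}f$, so the spatial fluctuation is $\ddp{X_{t}}{\tilde f}/\sqrt{|X_{t}|}$ and $\ddp{\varphi}{\tilde f}=0$. Two analytic inputs are needed. First, the superprocess moment formulae $\mathbb{E}_{\delta_{x}}\ddp{X_{r}}g=\T r^{\alpha}g(x)$ and $\var_{\mu}\ddp{X_{r}}g=2\beta\ddp{\mu}{\int_{0}^{r}\T u^{\alpha}[(\T{r-u}^{\alpha}g)^{2}]\dd u}$, together with the analogous identities (with $\T{}^{-\alpha}$ in place of $\T{}^{\alpha}$) for the conditioned--extinct process and for its excursion measure, $\mathbb{N}^{\dagger}_{x}[\ddp{\Xi_{r}}g^{2}]=2\beta\int_{0}^{r}\T u^{-\alpha}[(\T{r-u}^{-\alpha}g)^{2}](x)\dd u$. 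Second, the exponential ergodicity $\T t h(x)\conv\ddp{\varphi}h$ for $h\in\pspace(\R^{d})$, with rate $e^{-\mu t}$ from the spectral gap $\mu$ of $L$. Because $\alpha<2\mu$, the integral $\inti e^{-\alpha s}\ddp{\varphi}{(\T s^{\alpha}\tilde f)^{2}}\dd s$ converges --- this is where the hypothesis on $\alpha$ is essential --- and this already yields $\sigma_{f}<\infty$ and the convergence of all the time integrals in \eqref{eq:sigmaDefinition}.

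Next I would prove the statement conditionally on $\mathcal{G}_{s}$ for fixed large $s$, and then let $s\conv\infty$. The pre--$s$ blobs contribute $o(\sqrt{|X_{t}|})$ as $t\conv\infty$ (they are subcritical and old). For the sum over the $|Z_{s}|$ dressed pieces one checks, using the moment formulae and the ergodicity above, that: the conditional mean of $\ddp{X_{t}}{\tilde f}$ is $\cleq e^{(\alpha-\mu)(t-s)}|Z_{s}|=o(\sqrt{|X_{t}|})$ since $\alpha<2\mu$; the conditional variance, after division by $|X_{t}|\ceq e^{\alpha t}V_{\infty}$, converges (first $t\conv\infty$, then $s\conv\infty$) to $\sigma_{f}^{2}$ --- here the $\T{}^{\alpha}$--term comes from the spatial second moments of the skeleton, as in the branching particle CLT of \citep{Adamczak:2011kx}, the $u(\cdot,s)$-- and $\T{}^{-\alpha}$--terms come from the variance functionals of the immigrated conditioned--extinct blobs and from the $Ext^{c}$--conditioning, and the random factors cancel because $V_{\infty}$ is the same deterministic multiple of $W$ that governs $|Z_{s}|$; and the normalized third absolute moments tend to $0$, since $\alpha<2\mu$ forces the third cumulant of $\ddp{X_{t}}{\tilde f}$ to be $o(e^{3\alpha t/2})$. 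The Lindeberg--Feller CLT for the resulting triangular array then gives conditional asymptotic normality $\mathcal{N}(0,\sigma_{f}^{2})$, with a limit not depending on $\mathcal{G}_{s}$, whence the unconditional convergence of $G_{2}$ and its independence from $\tilde V_{\infty}$ (which is $\mathcal{G}_{\infty}$--measurable and is approximated by $\mathcal{G}_{s}$--measurable quantities). Running the same computation jointly for $\bigl(\tfrac{|X_{t}|-e^{\alpha t}V_{\infty}}{\sqrt{|X_{t}|}},\tfrac{\ddp{X_{t}}{\tilde f}}{\sqrt{|X_{t}|}}\bigr)$ --- the first coordinate being likewise a sum over the skeleton particles of centred conditionally independent terms, each the fluctuation of one subprocess mass about its own limit --- the joint conditional characteristic function factorizes in the limit, and the cross--covariance of the two coordinates converges to a multiple of $\ddp{\varphi}{\T r^{-\alpha}\tilde f}=e^{-\alpha r}\ddp{\varphi}{\tilde f}=0$ (using invariance of $\varphi$ for the OU semigroup), so $G_{1}\perp G_{2}$. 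The marginal $G_{1}\sim\mathcal{N}(0,2\beta/\alpha)$ is the CLT for the continuous--state branching process $|X_{t}|$ and can also be read off the same computation with $\tilde f$ constant.

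The main obstacle is the variance identification in the previous paragraph: precisely accounting for the fluctuations of the immigrated conditioned--extinct blobs --- those immigrated long before $t$, which have decayed, and those immigrated near $t$, which are overwhelmingly numerous but individually tiny --- so that their aggregate, combined with the skeleton's own spatial fluctuations and with the factor coming from the $V_{\infty}$--versus--$W$ relation, reproduces exactly the constant $\sigma_{f}^{2}$ of \eqref{eq:sigmaDefinition}, and with errors uniform enough in $t$ to legitimize interchanging the limits $t\conv\infty$ and $s\conv\infty$. This requires working with the excursion measure $\mathbb{N}^{\dagger}_{x}$ and with moment bounds uniform in the immigration site and time, and it is here that the polynomial growth allowed for $f$ (i.e.\ $f\in\pspace(\R^{d})$, not merely bounded $f$) must be controlled against the OU smoothing estimates.
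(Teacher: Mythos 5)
Your proposal follows essentially the same route as the paper: backbone (skeleton) decomposition, a two-time-scale argument (your fixed $s$ then $s\to\infty$ plays the role of the paper's $t$ versus $nt$ with $n$ fixed at the end), negligibility of the old subcritical pieces, a CLT for the conditionally independent sub-superprocesses rooted at the skeleton particles, and the same ergodicity mechanism with rate $e^{-\mu t}$ against growth $e^{\alpha t}$ making $\alpha<2\mu$ the relevant threshold. The "main obstacle" you flag --- identifying $\sigma_{f}^{2}$ by bookkeeping skeleton fluctuations against blob fluctuations under $\mathbb{N}^{\dagger}_{x}$ --- is precisely what the paper sidesteps by differentiating the Laplace functional of the dressed skeleton itself (equation \eqref{eq:generalEquation}, Fact \ref{fac:backBoneFacts}), which yields the closed forms for $V_{\tilde f}^{1},V_{\tilde f}^{2},V_{\tilde f}^{4}$ in one stroke and makes $\sigma_{f}^{2}$ simply $\lim_{t}e^{-\alpha t}V_{\tilde f}^{2}(x,t)$; the paper then verifies a Lyapunov (fourth-moment) condition with a spatial truncation $\|z_{t}(i)\|<t^{l}$ and the uniform convergence \eqref{eq:uniformConvergence}, rather than your Lindeberg route, and obtains the independence of $G_{1}$ and $G_{2}$ by conditioning on disjoint time windows rather than by a vanishing cross-covariance, though both mechanisms rest on the same cancellation $\ddp{\varphi}{\tilde f}=0$.
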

The proofs corresponding to this case are delegated to Section \ref{sec:smallBranching}.

\subsection{Critical growth $\alpha=2\mu$.\protect \\
}

We denote 
\begin{equation}
\sigma_{f}^{2}:=2\frac{\beta^{2}}{\alpha}\int_{\R^{d}}\rbr{x\circ\ddp{\grad f}{\eq}}^{2}\varphi(x)\dd x,\label{eq:criticalSigma}
\end{equation}
where $\circ$ is the standard scalar product. The main result of
this section is 
\begin{thm}
\label{thm:criticalBranching}Let $\cbr{X_{t}}_{t\geq0}$ be the OU-superprocess
starting from $\nu\in\fMeas$. Let us assume $\alpha=2\mu$ and $f\in\pspace(\R^{d})$.
Then $\sigma_{f}^{2}<+\infty$ and conditionally on set $Ext^{c}$
there is the convergence 
\[
\rbr{e^{-\alpha t}|X_{t}|,\frac{|X_{t}|-e^{t\alpha}V_{\infty}}{\sqrt{|X_{t}|}},\frac{\ddp{X_{t}}f-|X_{t}|\ddp f{\eq}}{t^{1/2}\sqrt{|X_{t}|}}}\rightarrow^{d}(\tilde{V}_{\infty},G_{1},G_{2}),\quad\text{as }t\conv+\infty,
\]
where $G_{1}\sim\mathcal{N}(0,\frac{2\beta}{\alpha}),G_{2}\sim\mathcal{N}(0,\sigma_{f}^{2})$
and $\tilde{V}_{\infty}$ is $V_{\infty}$ conditioned on $Ext^{c}$.
Moreover, the variables $\tilde{V}_{\infty},G_{1},G_{2}$ are independent.
\end{thm}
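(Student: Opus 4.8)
Write $\tilde f:=f-\ddp{f}{\eq}$, so that the quantity to be normalised is $\ddp{X_t}{\tilde f}$, and recall $V_t=e^{-\alpha t}|X_t|\conv V_\infty$, with $V_\infty>0$ on $Ext^{c}$. The plan is to isolate the part of $\ddp{X_t}{\tilde f}$ that produces the Gaussian fluctuation, to recognise it — using that $\alpha=2\mu$ is exactly the borderline case — as a martingale run on a linear time scale, and to close with the stable/mixing form of the martingale central limit theorem, reading off the limiting variance from the law of large numbers; the backbone decomposition of Section \ref{sub:Backbone-construction} serves to make the conditioning on $Ext^{c}$ and the identification of $\tilde V_\infty$ clean, and otherwise the argument parallels \citep{Adamczak:2011kx}. \emph{Step 1 (reduction to the first Hermite component).} Decompose $\tilde f=\tilde f_{1}+\tilde f_{\geq2}$, where $\tilde f_{1}(x)=x\circ\ddp{\grad f}{\eq}$ is the projection of $\tilde f$ onto the linear functions (the first Hermite functions of the OU semigroup; since $Lx_{i}=-\mu x_{i}$ and $\alpha=2\mu$ one has $(L+\alpha)\tilde f_{1}=(\alpha-\mu)\tilde f_{1}=\tfrac{\alpha}{2}\tilde f_{1}$), and $\tilde f_{\geq2}$ is the remainder. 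By the analytic estimates on $\T{}$ of Section \ref{sec:Proof-Preliminaires}, $\T{t}\tilde f_{\geq2}$ decays like $e^{-2\mu t}=e^{-\alpha t}$, so the second moment formula $\var_{\nu}\rbr{\ddp{X_t}{g}}=2\beta\intc{t}\ddp{\nu}{\T{s}^{\alpha}\sbr{\rbr{\T{t-s}^{\alpha}g}^{2}}}\dd s$ gives $\mathbb{E}_{\nu}\ddp{X_t}{\tilde f_{\geq2}}^{2}=O(e^{\alpha t})$, whence — since $|X_t|\ceq e^{\alpha t}V_\infty$ on $Ext^{c}$ — $\ddp{X_t}{\tilde f_{\geq2}}/\rbr{t^{1/2}\sqrt{|X_t|}}\conv0$ in probability on $Ext^{c}$. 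So it suffices to treat $\ddp{X_t}{\tilde f_{1}}$.

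\emph{Step 2 (the martingale and its bracket).} Since $(L+\alpha)\tilde f_{1}=\tfrac{\alpha}{2}\tilde f_{1}$, the process $N_t:=e^{-\alpha t/2}\ddp{X_t}{\tilde f_{1}}$ is a (continuous) martingale, and the martingale problem for the superprocess (branching variance $2\beta$) gives $\sb{N}_t=2\beta\intc{t}e^{-\alpha s}\ddp{X_s}{\tilde f_{1}^{2}}\dd s$. The key input is the law of large numbers of \citep{Englnder:2006wm}, which I would first extend from bounded continuous to polynomially growing test functions — legitimate because the OU equilibrium is Gaussian, and again a matter of the $\T{}$-estimates — in the form $e^{-\alpha s}\ddp{X_s}{\tilde f_{1}^{2}}\conv\ddp{\eq}{\tilde f_{1}^{2}}V_\infty$ a.s. (write $\tilde f_{1}^{2}=\ddp{\eq}{\tilde f_{1}^{2}}+h$ with $h$ a mean-zero Hermite function of degree two, for which $e^{-\alpha s}\ddp{X_s}{h}\conv0$, so that the limit is genuinely $\ddp{\eq}{\tilde f_{1}^{2}}V_\infty$). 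Ces\`aro averaging then yields $t^{-1}\sb{N}_t\conv c\,V_\infty$ a.s. for the explicit constant $c$ occurring in \eqref{eq:criticalSigma}.

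\emph{Step 3 (stable martingale CLT, joint convergence, independence).} Apply the martingale CLT to the rescaled martingales $s\mapsto N_{st}/\sqrt t$: their brackets converge (to $s\mapsto scV_\infty$) and the continuity of $N$ disposes of the Lindeberg term, so $N_t/\sqrt t\conv\sqrt{c\,V_\infty}\,Z$ in law with $Z\sim\mathcal{N}(0,1)$ \emph{independent} of $V_\infty$ — it is precisely the stable form of the theorem that delivers this independence, and it is the reason the limit is a true Gaussian and not a variance mixture. Since $\sqrt{|X_t|}=e^{\alpha t/2}\sqrt{V_t}$ and $V_t\conv V_\infty>0$ on $Ext^{c}$, Slutsky gives $\ddp{X_t}{\tilde f}/\rbr{t^{1/2}\sqrt{|X_t|}}=N_t/\rbr{\sqrt t\,\sqrt{V_t}}+o_{P}(1)\conv\sqrt{c}\,Z=G_2\sim\mathcal{N}(0,\sigma_f^{2})$ conditionally on $Ext^{c}$, with $G_2$ independent of $V_\infty$, hence of $\tilde V_\infty$. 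For the middle coordinate, $\rbr{|X_t|-e^{\alpha t}V_\infty}/\sqrt{|X_t|}=e^{\alpha t/2}\rbr{V_t-V_\infty}/\sqrt{V_t}$, and the tail of the martingale $V$ satisfies $\sb{V}_\infty-\sb{V}_t=2\beta\int_{t}^{\infty}e^{-\alpha s}V_s\,\dd s\ceq\tfrac{2\beta}{\alpha}e^{-\alpha t}V_\infty$, so a CLT for this tail — equivalently the classical CLT for the compound-Poisson representation \eqref{eq:LimitRep} of $e^{\alpha t}V_\infty$ conditionally on $X_t$, whose variance equals $\tfrac{2\beta}{\alpha}|X_t|$ — gives convergence of this coordinate to $G_1\sim\mathcal{N}(0,\tfrac{2\beta}{\alpha})$. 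The joint convergence of the triple and the mutual independence of $\tilde V_\infty,G_1,G_2$ then follow from the two-dimensional version of the argument: the cross bracket $\sb{N,V}_t=2\beta\intc{t}e^{-\alpha s}N_s\,\dd s$ stays bounded while $\sb{N}_t\ceq t$, so after the $t^{1/2}$ scaling the Gaussian carried by $N$ decorrelates from — and, being a stable limit, is independent of — both $V_\infty$ and the Gaussian carried by $V$.

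\emph{Main obstacle.} The heart of the matter is the stable martingale CLT of Step 3, with its random, $V_\infty$-proportional normalisation, together with the independence assertions; these rest on the almost sure convergence $t^{-1}\sb{N}_t\conv cV_\infty$, hence on the law of large numbers for the \emph{unbounded} function $\tilde f_{1}^{2}$, so the extension of the LLN of \citep{Englnder:2006wm} to polynomially growing test functions (again via the $\T{}$-estimates) is the second substantial ingredient. An equivalent, more uniform route — closer to \citep{Adamczak:2011kx} — is to prove everything at once through the integral equation \eqref{eq:integralEquation}, establishing $\mathbb{E}_{\nu}\exp\rbr{-\lambda e^{-\alpha t}|X_t|+i\theta\,t^{-1/2}e^{-\alpha t/2}\ddp{X_t}{\tilde f}}\conv\mathbb{E}\exp\rbr{-\rbr{\lambda+\tfrac12\theta^{2}\sigma_f^{2}}V_\infty}$; there the difficulty is the perturbative control of \eqref{eq:integralEquation} in the critical scaling, where $\theta$ enters the limit only through the quadratic term and the factor $t^{1/2}$ is produced because $\intc{t}$ diverges.
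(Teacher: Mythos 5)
Your proposal takes a genuinely different route from the paper, and it is fundamentally sound. The paper works through the backbone decomposition \eqref{eq:fundamentalDecomposition}: it runs on two time scales $(t,nt)$, reduces $\ddp{\Lambda_{nt}}{\tilde f}$ to a sum $\sum_{i=1}^{|z_t|}(Z_t^{n,i}-z_t^{n,i})$ of conditionally independent terms indexed by the backbone particles at time $t$, establishes a Lindeberg CLT for that array via the fourth-moment bound \eqref{eq:criticalBranchingFourthMoment} and the uniform variance convergence \eqref{eq:criticalBranchingUniformConvergence}, and then disposes of the residual term $H_t^n$ (which is $O(1/\sqrt n)$ in $L^2$, see \eqref{eq:hnt}) by the metric-space argument at the end of Section \ref{sec:criticalBranching} which lets $n\to\infty$ last. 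You instead stay at the level of the superprocess: you split $\tilde f$ into the linear eigenfunction $\tilde f_1=x\circ\ddp{\grad f}{\eq}$ (for which $(L+\alpha)\tilde f_1=\tfrac{\alpha}{2}\tilde f_1$ precisely because $\alpha=2\mu$) and a remainder that decays fast enough to be negligible at this scaling; you recognise $N_t=e^{-\alpha t/2}\ddp{X_t}{\tilde f_1}$ as a continuous martingale with $\langle N\rangle_t=2\beta\int_0^t e^{-\alpha s}\ddp{X_s}{\tilde f_1^2}\dd s$, use the LLN to get $t^{-1}\langle N\rangle_t\to 2\beta\ddp{\eq}{\tilde f_1^2}V_\infty$, and invoke the stable/mixing martingale CLT. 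This has real advantages: it explains in one line \emph{why} the $t^{1/2}$ appears (the bracket grows linearly exactly at the eigenvalue crossing), it makes the independence of $G_2$ from $V_\infty$ structural rather than a by-product of truncations, and it avoids the entire two-time-scale / dressing-negligibility machinery. The price is that you need the martingale problem and the LLN for \emph{unbounded} test functions in $\pspace$; you correctly flag both, but note also that the LLN for $\pspace$-functions is stated in this paper only as a \emph{corollary} of the CLT theorems, so to avoid circularity you would have to prove it directly (a second-moment/Borel--Cantelli argument using Fact \ref{fac:decay1} suffices, but it is not free).

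One substantive issue: your constant does not match \eqref{eq:criticalSigma} as you assert. From $t^{-1}\langle N\rangle_t\to 2\beta\ddp{\eq}{\tilde f_1^2}V_\infty$ and $\sqrt{|X_t|}=e^{\alpha t/2}\sqrt{V_t}$, your argument yields the limiting variance
\[
\frac{\ddp{X_t}{\tilde f}}{t^{1/2}\sqrt{|X_t|}}\xrightarrow{d}\mathcal{N}\bigl(0,\,2\beta\ddp{\eq}{\tilde f_1^2}\bigr),
\]
whereas \eqref{eq:criticalSigma} reads $\sigma_f^2=\frac{2\beta^2}{\alpha}\ddp{\eq}{\tilde f_1^2}$; the two differ by a factor $\beta/\alpha$. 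Tracing the paper's own chain $Z_1\to\cdots\to Z_5^n$ one finds the conversion from the $|z_t|^{-1/2}$-normalised sum over backbone particles back to the $|X_t|^{-1/2}$-normalised quantity introduces a factor $\sqrt{W_\infty/V_\infty}=\sqrt{\alpha/\beta}$ (by \eqref{eq:martingaleEquiv1}) which appears not to have been carried through, so your $2\beta\ddp{\eq}{\tilde f_1^2}$ is in fact the consistent value and the formula \eqref{eq:criticalSigma} seems to carry a spurious $\beta/\alpha$. You should not present $c$ as equal to the constant of \eqref{eq:criticalSigma} without remarking on this discrepancy. The remaining gaps you acknowledge (polynomial test functions in the martingale problem, stable form of the MCLT with $\mathcal{F}_\infty$-measurable random scaling, the tail-CLT for the second coordinate via the compound Poisson representation \eqref{eq:LimitRep}) are genuine but fillable with standard tools.
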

The proofs corresponding to this case are delegated to Section \ref{sec:criticalBranching}.

\subsection{Fast growth $\alpha>2\mu$.\protect \\
}

Let us define a process $\cbr{H_{t}}_{t\geq0}$ by 
\begin{equation}
H_{t}:=e^{-(\alpha-\mu)t}\int_{\R^{d}}x\: X_{t}(\dd x).\label{eq:secondMartingale}
\end{equation}

\begin{fact}
\label{fac:Hmartingale}The process $H$ is a martingale and under
assumption $\alpha>2\mu$ it is $L^{2}$-bounded.

From the fact it follows that in the setting of this section the following
limit (both $a.s.$ and $L^{2}$) is well-defined and $a.s.$ finite
\[
H_{\infty}:=\lim_{t\conv+\infty}H_{t}.
\]

\end{fact}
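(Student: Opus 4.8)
The plan is to verify the martingale property component-by-component and then control the second moment via the first- and second-moment formulas for the superprocess together with the explicit action of the Ornstein--Uhlenbeck semigroup on linear functions. Write $H_t = (H_t^{(1)},\dots,H_t^{(d)})$ with $H_t^{(i)} = e^{-(\alpha-\mu)t}\ddp{X_t}{p_i}$, where $p_i(x)=x_i$. The key analytic input is that $p_i$ is an eigenfunction of $L$: since $L p_i(x) = -\mu x_i$, we have $\T t p_i(x) = e^{-\mu t} x_i$, hence $\T t^{\alpha} p_i(x) = e^{(\alpha-\mu)t}x_i$. Although $p_i \notin bp(\R^d)$, it belongs to $\pspace(\R^d)$ and one extends the first-moment identity $\mathbb{E}_\nu \ddp{X_t}{g} = \ddp{\T t^{\alpha} g}{\nu}$ to such $g$ by the usual truncation/monotone-class argument (and compact support of $\nu$ makes all integrals finite). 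This immediately gives $\mathbb{E}_\nu H_t^{(i)} = e^{-(\alpha-\mu)t}e^{(\alpha-\mu)t}\ddp{p_i}{\nu} = \ddp{p_i}{\nu}$, constant in $t$.

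For the martingale property I would use the Markov property of $X$: conditionally on $\mathcal{F}_s$, $X_{s+t}$ is a superprocess started from $X_s$, so
\[
\mathbb{E}_\nu\sbr{H_{s+t}^{(i)}\mid\mathcal{F}_s} = e^{-(\alpha-\mu)(s+t)}\mathbb{E}_{X_s}\ddp{X_t}{p_i} = e^{-(\alpha-\mu)(s+t)}\ddp{\T t^{\alpha}p_i}{X_s} = e^{-(\alpha-\mu)s}\ddp{p_i}{X_s} = H_s^{(i)},
\]
using the eigenfunction computation again. (Integrability of $H_s^{(i)}$, needed to condition, follows from the $L^2$ bound established next, or directly from the first-moment formula applied to $|p_i|$.) So each coordinate, and hence $H$, is a martingale with respect to the natural filtration.

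For $L^2$-boundedness under $\alpha>2\mu$ I would compute $\mathbb{E}_\nu\ddp{X_t}{p_i}^2$ from the second-moment formula for superprocesses with branching mechanism $\psi(\lambda)=-\alpha\lambda+\beta\lambda^2$, namely
\[
\mathbb{E}_\nu\ddp{X_t}{g}^2 = \rbr{\ddp{\T t^{\alpha}g}{\nu}}^2 + 2\beta\int_0^t \ddp{\T s^{\alpha}\sbr{\rbr{\T{t-s}^{\alpha}g}^2}}{\nu}\,\dd s,
\]
valid for $g\in\pspace(\R^d)$ by the same truncation argument (this is the standard consequence of differentiating \eqref{eq:logLaplace}--\eqref{eq:integralEquation} twice, or of the particle-system/backbone picture). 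With $g=p_i$ the first term is $(e^{(\alpha-\mu)t}\ddp{p_i}{\nu})^2$. For the second term, $\rbr{\T{t-s}^{\alpha}p_i}^2(y) = e^{2(\alpha-\mu)(t-s)} y_i^2$, and since $p_i^2 = y_i^2$ grows only polynomially, $\T s^{\alpha}[\,\cdot\,]$ applied to it produces $e^{\alpha s}$ times a quadratic-in-$y$ function of $y$ with bounded-in-$s$ coefficients: explicitly $\T s(y_i^2)(x) = e^{-2\mu s}x_i^2 + \frac{\sigma^2}{2\mu}(1-e^{-2\mu s})$, so $\T s^{\alpha}(p_i^2)(x) \cleq e^{\alpha s}(1+\norm x{}2)$ uniformly in $s\ge 0$. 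Plugging in, the integrand is $\cleq e^{\alpha s}\,e^{2(\alpha-\mu)(t-s)} = e^{2(\alpha-\mu)t}\,e^{(\alpha-2\mu)(-s)}\cdot e^{0}$; more precisely it equals $C\, e^{2(\alpha-\mu)(t-s)}e^{\alpha s} = C\, e^{2(\alpha-\mu)t} e^{-(\alpha-2\mu)s}$, which is integrable over $s\in[0,\infty)$ precisely because $\alpha-2\mu>0$. Hence $\mathbb{E}_\nu\ddp{X_t}{p_i}^2 \cleq e^{2(\alpha-\mu)t}$ with a constant uniform in $t$, so $\mathbb{E}_\nu (H_t^{(i)})^2 = e^{-2(\alpha-\mu)t}\mathbb{E}_\nu\ddp{X_t}{p_i}^2$ is bounded, giving $L^2$-boundedness of $H$. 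The almost-sure and $L^2$ convergence to a finite limit $H_\infty$ then follows from the martingale convergence theorem applied coordinatewise.

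The step I expect to be the main obstacle is the justification of the first- and second-moment formulas for the unbounded test function $p_i(x)=x_i$ — i.e.\ that \eqref{eq:logLaplace}--\eqref{eq:integralEquation}, which are stated for $f\in bp(\R^d)$, extend to $f\in\pspace(\R^d)$ when $\nu$ has compact support. I would handle this by truncating $p_i$ to $p_i^{(R)}:=(p_i\wedge R)\vee(-R)$, applying the bounded theory, and then passing to the limit $R\to\infty$ using the explicit bounds on $\T s^{\alpha}(p_i^{(R)})$ and $\T s^{\alpha}((p_i^{(R)})^2)$ together with dominated convergence (the dominating functions being exactly the $e^{(\alpha-\mu)t}$- and $e^{\alpha s}(1+\norm\cdot{}2)$-type bounds above, which are $\nu$-integrable by compact support); monotone convergence controls $\mathbb{E}_\nu\ddp{X_t}{|p_i|}$ to license the interchange. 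Everything else is a direct substitution of the OU eigen-relation into the moment formulas.
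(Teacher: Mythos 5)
Your argument is correct and is essentially the same as the paper's: both establish the martingale property from the Markov property together with the eigenfunction identity $\T t^{\alpha}\,id = e^{(\alpha-\mu)t}\,id$ (the paper's \eqref{eq:superpocesMoment1} and \eqref{eq:semigroupFirstEignenvalue}), and both derive $L^2$-boundedness by substituting this identity into the second-moment formula \eqref{eq:superprocesMoment2} and observing that the resulting integral is bounded precisely because $\alpha>2\mu$, followed by the martingale convergence theorem. You spell out the truncation step (extending the moment identities from $bp(\R^d)$ to $\pspace(\R^d)$) more explicitly than the paper, which handles this once and for all in Fact \ref{fac:misc1}; otherwise the two arguments are the same computation.
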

Let us note that $H_{\infty}$ depends on the starting condition $\nu\in\mathcal{M}_{F}(\R^{d})$,
describing this dependence would be notationally cumbersome at this
point, hence is deferred to Theorem \ref{fact:Hdependence}.

The main result of this section is 
\begin{thm}
\label{thm:fastBranching}Let $\cbr{X_{t}}_{t\geq0}$ be the OU-superprocess
starting from $\nu\in\fMeas$. Let us assume $\alpha>2\mu$ and $f\in\pspace(\R^{d})$.
Then conditionally on the set of non-extinction $Ext^{c}$ there is
the convergence 
\begin{equation}
\rbr{e^{-\alpha t}|X_{t}|,\frac{|X_{t}|-e^{t\alpha}V_{\infty}}{\sqrt{|X_{t}|}},\frac{\ddp{X_{t}}f-|X_{t}|\ddp f{\eq}}{\exp\rbr{(\alpha-\mu)t}}}\rightarrow^{d}(\tilde{V}_{\infty},G,\ddp{\grad f}{\eq}\circ\tilde{H}_{\infty}),\label{eq:factConv1}
\end{equation}
where $G\sim\mathcal{N}(0,\frac{2\beta}{\alpha})$, variables $\tilde{H}_{\infty},\tilde{V}_{\infty}$
are respectively $H_{\infty},V_{\infty}$ conditioned on $Ext^{c}$
and $(\tilde{V}_{\infty},\tilde{H}_{\infty}),G$ are independent.
Moreover 
\begin{equation}
\rbr{e^{-\alpha t}|X_{t}|,\frac{\ddp{X_{t}}f-|X_{t}|\ddp f{\eq}}{\exp\rbr{(\alpha-\mu)t}}}\rightarrow(V_{\infty},\ddp{\grad f}{\eq}\circ H_{\infty}),\quad\text{in probability.}\label{eq:fastConv2}
\end{equation}

\end{thm}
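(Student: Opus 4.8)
The plan is to reduce everything to the asymptotics of the linearized semigroup $\T t^{\alpha}$ combined with the martingale convergence already recorded in Facts \ref{fac:totalMassMartingale} and \ref{fac:Hmartingale}, and then to use the backbone decomposition to handle the fluctuations. First I would analyze the deterministic object $e^{-(\alpha-\mu)t}\T t^{\alpha}\tilde f(x)$, where $\tilde f=f-\ddp\varphi f$. Since $\T t$ is the Ornstein--Uhlenbeck semigroup, it contracts toward $\varphi$ geometrically, and the slowest-decaying mode beyond the constant is the linear one, decaying like $e^{-\mu t}$; concretely one expects $e^{\mu t}\,\T t\tilde f(x)\to x\circ\ddp{\grad f}\varphi$ (the Hermite/Mehler-kernel expansion of the OU semigroup), hence $e^{-(\alpha-\mu)t}\T t^{\alpha}\tilde f(x)\to x\circ\ddp{\grad f}\varphi$ uniformly on compacts and with polynomial control in $x$ so that it can be integrated against $X_t$-moments. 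This is the analytic heart and the step where the hypothesis $\alpha>2\mu$ (equivalently $\alpha-\mu>\alpha/2$) will be used to guarantee that the quadratic correction term in \eqref{eq:integralEquation}, carrying two factors of $u_f$ and hence roughly $e^{2(\alpha-\mu)t}$ worth of growth against only $e^{\alpha t}$ of decay from $\T{}^\alpha$, is genuinely lower order after the $e^{(\alpha-\mu)t}$ normalization.

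Next I would treat the probabilistic side. Write $\ddp{X_t}f-|X_t|\ddp f\varphi=\ddp{X_t}{\tilde f}$. The first-moment formula gives $\mathbb{E}_\nu\ddp{X_t}{\tilde f}=\ddp{\T t^\alpha\tilde f}\nu$, so by the first step $e^{-(\alpha-\mu)t}\mathbb{E}_\nu\ddp{X_t}{\tilde f}\to\ddp{\grad f}\varphi\circ\int_{\R^d}x\,\nu(\dd x)$. To upgrade this to convergence in probability I would show that $e^{-(\alpha-\mu)t}\ddp{X_t}{\tilde f}$ is, up to an error vanishing in $L^2$, equal to $\ddp{\grad f}\varphi\circ e^{-(\alpha-\mu)t}\int x\,X_t(\dd x)=\ddp{\grad f}\varphi\circ H_t$, and then invoke the $L^2$ martingale convergence $H_t\to H_\infty$ from Fact \ref{fac:Hmartingale}. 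The error term is $e^{-(\alpha-\mu)t}\rbr{\ddp{X_t}{\tilde f}-\ddp{X_t}{x\circ\ddp{\grad f}\varphi}}=e^{-(\alpha-\mu)t}\ddp{X_t}{g}$ with $g(x):=\tilde f(x)-x\circ\ddp{\grad f}\varphi$ a polynomially bounded function with $\ddp\varphi g=0$ whose OU-semigroup decays strictly faster than $e^{-\mu t}$ (the next Hermite mode, order $e^{-2\mu t}$ or, if $d$-dependent degeneracies occur, at least $o(e^{-\mu t})$); a second-moment computation via the standard variance formula for superprocesses (the $\beta\int_0^t\T s^\alpha[(\T{t-s}^\alpha\,\cdot\,)^2]$ expression one extracts from \eqref{eq:integralEquation}) then shows $\mathbb{E}_\nu\rbr{e^{-(\alpha-\mu)t}\ddp{X_t}{g}}^2\to0$, again because $\alpha-\mu>\alpha/2$ makes the normalization beat the $e^{\alpha/2\cdot\text{(something)}}$ growth of the fluctuation and the strict extra decay of $\T{}g$ kills the leading would-be-surviving term. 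This establishes \eqref{eq:fastConv2}, the convergence in probability, jointly with $e^{-\alpha t}|X_t|=V_t\to V_\infty$ which is Fact \ref{fac:totalMassMartingale}.

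For the stable/distributional statement \eqref{eq:factConv1} I would pass to the backbone decomposition of Section \ref{sub:Backbone-construction}: conditionally on $Ext^c$ the mass is carried by finitely many immortal backbone particles, and $V_\infty,H_\infty$ conditioned on $Ext^c$ are $\tilde V_\infty,\tilde H_\infty$ as in the statement; the convergence in probability just proved, restricted to $Ext^c$, immediately yields the third coordinate of \eqref{eq:factConv1} as $\ddp{\grad f}\varphi\circ\tilde H_\infty$. The only genuinely new content in \eqref{eq:factConv1} over \eqref{eq:fastConv2} is therefore the middle coordinate $\rbr{|X_t|-e^{t\alpha}V_\infty}/\sqrt{|X_t|}\to^d G\sim\mathcal N(0,\tfrac{2\beta}\alpha)$ together with its asymptotic independence from $(\tilde V_\infty,\tilde H_\infty)$. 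This is a statement purely about the total mass $|X_t|$, which is a continuous-state branching process (a Feller diffusion), so it reduces to a classical CLT for the fluctuations of a supercritical CSBP around its exponential limit; the asymptotic independence comes from the fact that $G$ arises from the \emph{late} fluctuations of the mass whereas $\tilde V_\infty,\tilde H_\infty$, being $L^2$ martingale limits, are determined up to arbitrarily small $L^2$ error by the mass/first-moment configuration at a fixed large time, so one uses a Markov-property splitting at time $s$, lets $t\to\infty$ to get a Gaussian conditionally on $\mathcal F_s$ with the correct variance, then lets $s\to\infty$. I expect the main obstacle to be the \textbf{uniform polynomial-in-$x$ control of $e^{\mu t}\T t\tilde f$ and its remainder $g$}: because $f\in\pspace$ only grows polynomially, one must carry explicit Mehler-kernel estimates showing that the subleading terms decay in $t$ uniformly against the polynomial weights that appear when integrating against the second-moment measure of $X_t$ — and this same estimate, together with bookkeeping the quadratic term of \eqref{eq:integralEquation}, is exactly what the $\alpha>2\mu$ threshold is designed to make work, so getting the exponents to balance cleanly is the delicate part.
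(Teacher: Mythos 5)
Your argument for the convergence in probability \eqref{eq:fastConv2} is correct and in fact takes a genuinely different, more direct route than the paper. You decompose $\tilde f(x)=x\circ\ddp{\grad f}{\eq}+g(x)$ and exploit that, by \eqref{eq:semigroupFirstEignenvalue} and \eqref{eq:decaySpeed}, $\T t g$ decays at rate $e^{-2\mu t}$ rather than $e^{-\mu t}$; then the first coordinate of the decomposition is exactly $e^{(\alpha-\mu)t}\,\ddp{\grad f}{\eq}\circ H_t$, and the variance formula extracted from \eqref{eq:superprocesMoment2} gives
\[
e^{-2(\alpha-\mu)t}\,\mathrm{Var}_\nu\!\ddp{X_t}{g}\cleq e^{-2(\alpha-\mu)t}\,e^{\alpha t}\int_0^t e^{(\alpha-4\mu)s}\dd s,
\]
which vanishes for every $\alpha>2\mu$ (whether $\alpha$ is below, equal to or above $4\mu$), while with only the crude bound $|\T s\tilde f|\cleq e^{-\mu s}$ one would get boundedness but not vanishing — so your remark that the extra decay of $\T{} g$ is the essential ingredient is exactly right. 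Together with $H_t\to H_\infty$ in $L^2$ (Fact \ref{fac:Hmartingale}) this gives \eqref{eq:fastConv2}. The paper instead routes the same estimate through the backbone representation \eqref{eq:fundamentalDecomposition}, proving the convergence first for the backbone realization $\Lambda$ and then transferring it to a general realization by a distributional argument; your route bypasses that transfer step entirely, which is a genuine simplification. Both approaches ultimately rest on the same analytic facts (Fact \ref{fac:decay1}, Fact \ref{fac:backBoneFacts}/\ref{fac:misc1}) and the same role of the threshold $\alpha>2\mu$.

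For the joint distributional statement \eqref{eq:factConv1} your plan is correct in outline but is still only a sketch: the marginal convergence in probability of coordinates one and three gives their joint convergence for free, but the Gaussian limit of the middle coordinate and its asymptotic independence from $(\tilde V_\infty,\tilde H_\infty)$ still require the branching-property/Markov-splitting machinery — splitting at time $nt$, identifying the middle coordinate as a normalized sum of the centered martingale limits $1-V_\infty^i$ of the $\lfloor|\Lambda_{nt}|\rfloor$ sub-systems, applying the Lindeberg CLT, and showing the conditional characteristic function factorizes. This is precisely what the paper does in the proof of Theorem \ref{thm:smallBranching} (the chain $Z_1,\dots,Z_6$ and $\chi_1,\dots,\chi_4$), and the paper's proof of Theorem \ref{thm:fastBranching} simply defers to that machinery. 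Your "split at time $s$, let $t\to\infty$, then $s\to\infty$" heuristic is consistent with it, but you should be explicit that the limit random variable $G$ is identified through the i.i.d. sum $\sum_i(1-V_\infty^i)/\sqrt{|\Lambda_{nt}|}$ and that independence comes from conditioning on $\mathcal F_{nt}$, which determines $(\tilde V_\infty,\tilde H_\infty)$ up to vanishing $L^2$ error while the centered sum becomes Gaussian with variance $\sigma_V^2=2\beta/\alpha$ from \eqref{eq:varianceMaringaleLimit}.
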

The proofs corresponding to this case are delegated to Section \ref{sec:fastBranching}.

\subsection{Discussion and remarks.\protect \\
}

As a corollary to the above statements we obtain a weak law of large
numbers for a slightly larger class of the test functions, compared
to \citep[Theorem 1]{Englnder:2006wm} . 
\begin{thm}
Let $\cbr{X_{t}}_{t\geq0}$ be the OU-superprocess starting from $\nu\in\fMeas$
and $f\in\pspace(\R^{d})$ then 
\[
\lim_{t\conv+\infty}e^{-\alpha t}\ddp{X_{t}}f=\ddp{\varphi}tV_{\infty},\quad\text{in probability.}
\]
\end{thm}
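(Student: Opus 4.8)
The strategy is to derive this weak law of large numbers as a direct consequence of the three central limit theorems just stated, by observing that in each case the fluctuation term $\ddp{X_t}f - |X_t|\ddp f\varphi$, once divided by $e^{\alpha t}$, vanishes in probability. The key point is that all three norming functions $F_t$ appearing in Theorems \ref{thm:smallBranching}, \ref{thm:criticalBranching} and \ref{thm:fastBranching} are of order $o(e^{\alpha t})$ on $Ext^c$: indeed $|X_t|^{1/2}$ is of order $e^{(\alpha/2)t}$, the critical norming $t^{1/2}|X_t|^{1/2}$ is of order $t^{1/2}e^{(\alpha/2)t}$, and the fast-growth norming $e^{(\alpha-\mu)t}$ satisfies $\alpha-\mu<\alpha$ since $\mu>0$. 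In every regime $F_t e^{-\alpha t}\conv 0$.

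First I would fix $f\in\pspace(\R^d)$ and write
\[
e^{-\alpha t}\ddp{X_t}f - e^{-\alpha t}|X_t|\ddp f\varphi = \rbr{F_t e^{-\alpha t}}\cdot\frac{\ddp{X_t}f - |X_t|\ddp f\varphi}{F_t},
\]
where $F_t$ is the norming appropriate to the sign of $\alpha-2\mu$. By the relevant one of the three theorems, the second factor on the right converges in distribution (conditionally on $Ext^c$) to a finite random variable — $G_2$ in the first two cases, $\ddp{\grad f}\varphi\circ\tilde H_\infty$ in the third — hence is tight, while the first factor $F_t e^{-\alpha t}$ tends to $0$ deterministically. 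Therefore the product tends to $0$ in probability on $Ext^c$. Since $e^{-\alpha t}|X_t|=V_t\conv V_\infty$ a.s. by Fact \ref{fac:totalMassMartingale}, we get $e^{-\alpha t}\ddp{X_t}f\conv \ddp f\varphi\, V_\infty$ in probability on $Ext^c$.

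It remains to handle the complementary event $Ext$. On $Ext$ we have $|X_t|\conv 0$, and since $f\in\pspace(\R^d)$ grows at most polynomially while $X_t$ is a finite compactly supported measure, one still needs a uniform control; the cleanest route is to note that on $Ext$ actually $|X_t|=0$ for all $t$ large enough (the superprocess hits the zero measure in finite time on the extinction event), so $\ddp{X_t}f=0$ eventually and the claimed limit holds trivially there with $V_\infty=0$. Combining the two cases gives convergence in probability on the whole space. The only real subtlety — and the one step I would be most careful about — is the passage on $Ext^c$ from ``$F_t e^{-\alpha t}\to 0$ and the ratio is tight'' to ``the product $\to 0$ in probability,'' which requires that the tightness and the convergence be genuinely conditional on $Ext^c$ in a compatible way; this is exactly what the conditional formulation of the three CLTs provides, so no new estimate is needed.
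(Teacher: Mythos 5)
Your proof is correct and is essentially the argument the paper intends: it states the theorem as a corollary of the three CLTs and supplies no separate proof, and your decomposition into $(F_t e^{-\alpha t})\cdot(\text{normalized fluctuation})$ plus the Slutsky-type ``tight times vanishing'' argument is exactly the expected route. Two small imprecisions worth flagging, neither of which damages the argument: (i) in the small-growth and critical cases $F_t$ is random, so $F_te^{-\alpha t}$ does not tend to $0$ \emph{deterministically} but rather a.s.\ on $Ext^c$ (it equals $e^{-(\alpha/2)t}\sqrt{V_t}$ resp.\ $t^{1/2}e^{-(\alpha/2)t}\sqrt{V_t}$, and $V_t\conv V_\infty<\infty$ a.s.); the tightness argument is unaffected. (ii) Your handling of $Ext$ via finite-time extinction is the right move, since $f$ may be unbounded so $|X_t|\conv 0$ alone would not suffice; it does rest on Grey's condition for the total-mass CSBP, which holds here because $\psi(\lambda)=-\alpha\lambda+\beta\lambda^2$ satisfies $\int^\infty \psi(\lambda)^{-1}\,\dd\lambda<\infty$, so the brief justification is worth including.
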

\begin{rem}
\label{rem:BPSapproximation}As mentioned in the Introduction the
results are closely related to the ones in \citep{Adamczak:2011kx}.
This follows naturally by the fact that $X$ can be defined in terms
of the OU branching systems considered therein. This construction
can be described as follows. In the $n$-th approximation each particles
carries mass $1/n$ and lives for an exponential time with parameter
$1/n$. During this time it executes a random movement according to
the Ornstein-Uhlenbeck process with the infinitesimal operator $L$.
When it dies the particle is replaced by a random number of offspring.
The mean of this number is $1+\alpha/n$, while the variance is $2\beta$. 

The particle view point gives more intuition. Having this in mind
it is easier to understand the discussion in the Introduction, moreover
some further heuristics are given in \citep[Remark 3.3, Remark 3.7, Remark 3.11]{Adamczak:2011kx}. 
\end{rem}

\begin{rem}
The law of $H_{\infty}$ is an unresolved problem. It can be proved
however that it is not Gaussian. Using Fact \ref{fact:Hdependence}
we will see that $H_{\infty}$ is closely related to the corresponding
limit for the Ornstein-Uhlenbeck branching process. For further discussion
we refer the reader to \citep[Remark 3.12]{Adamczak:2011kx}.
\end{rem}

\begin{rem}
We suspect that the convergence in \eqref{eq:fastConv2} is in fact
almost sure.
\end{rem}

\begin{rem}
In our paper, for the sake of simplicity, we choose to work with the
branching mechanism \eqref{eq:branchingMechnism}. Using our methods
it should be straightforward to prove the results for any branching
mechanism which admits the fourth moment (i.e. $\psi^{(4)}(0)<+\infty$). 

We conjecture also that the results are valid for any branching mechanism
with the second moment. An interesting question would be to go beyond
this assumption. It is natural to expect different normalization and
convergence to some stable random variable.
\end{rem}

\begin{rem}
The other remarks of \citep{Adamczak:2011kx} are also relevant to
our case. For the sake of brevity we only mention that the most important
extensions of the present paper will be to study superprocesses with
general diffusion, instead for the Ornstein-Uhlenbeck process and
the case of superprocesses with non-homogenous branching rates. This
will be by no means a trivial task, a short explanation of the forthcoming
difficulties is given in \citep[Remark 3.16]{Adamczak:2011kx}.
\end{rem}

\section{Proof Preliminaries\label{sec:Proof-Preliminaires}}

In this section we gather all auxiliary facts used in the proofs of
results presented in Section \ref{sec:Results}. The proof itself
are contained separately in Sections \ref{sec:smallBranching}-\ref{sec:criticalBranching}.

\subsection{Backbone construction\label{sub:Backbone-construction}}

Following recent developments, e.g. \citep{Berestycki:2011kx}, we
present a so-called backbone construction of a \emph{supercritical}
superprocess. The main idea is to define a \emph{backbone}, being
a \emph{supercritical branching particle system}, which is dressed
with \emph{subcritical} superprocesses. This allows, up to some extent,
to treat a superprocess as a discrete object. Such property makes
things easier. On the conceptual level the proofs presented in the
paper owes much to the proofs for the branching particle system in
\citep{Adamczak:2011kx}. Our strategy is to take a proof of \citep{Adamczak:2011kx}
and control the dressing behavior in a suitable way. This is the main
technical difficulty of the paper. We will comment once again about
the strategy after presenting decomposition \eqref{eq:fundamentalDecomposition}. 

To make this paper self-contained we will now briefly present the
aspects of the backbone construction which are relevant to our paper.
Much of the text below is {}``borrowed'' from \citep[Section 2.4]{Berestycki:2011kx}.
We refer to this paper a reader interested in a more general and detailed
description%
\footnote{The author thanks Andreas Kyprianou for letting to use the parts of
description in \citep{Berestycki:2011kx}. We note that our notation
is mainly consistent with the one in \citep{Berestycki:2011kx}. The
only notable exception is of $\alpha$ in the branching mechanism
function $\psi$. We prefer to assume that $\alpha>0$ and put $-\alpha$
in \eqref{eq:branchingMechnism} instead of the form in \citep[Section 2.1]{Berestycki:2011kx}.%
}. Let us recall that we assume that the branching mechanism is given
by \eqref{eq:branchingMechnism} and that $\alpha>0$. Let $\lambda^{*}$
be the largest root of $\psi(\lambda)=0,$ i.e. 
\[
\lambda^{*}=\frac{\alpha}{\beta}.
\]
We denote $\psi^{*}(\lambda):=\psi(\lambda+\lambda^{*})$ and check
that 
\begin{equation}
\psi^{*}(\lambda)=\beta(\lambda+\frac{\alpha}{\beta})^{2}-\alpha(\lambda+\frac{\alpha}{\beta})=\beta\lambda^{2}+2\alpha\lambda+\frac{\alpha^{2}}{\beta}-\alpha\lambda-\frac{\alpha^{2}}{\beta}=\alpha\lambda+\beta\lambda^{2}.\label{eq:subcritical}
\end{equation}
The superprocess construction presented in Section \ref{sec:Preliminareis}
is also valid for $\psi^{*}$ being the branching mechanism. This
superprocess is subcritical i.e. its total mass decays exponentially
fast. We will refer to it using additional superscript $^{*}$, e.g.
$\ev{}_{\nu}^{*}$ \eqref{eq:logLaplace}. 

We calculate the branching law of the prolific backbone (see \citep[Section 2.4]{Berestycki:2011kx})
\begin{equation}
F(s)=\frac{1}{\lambda^{*}}\psi(\lambda^{*}(1-s))=\rbr{-\alpha(1-s)+\alpha(1-s)^{2}}=\alpha\rbr{-1+s+1-2s+s^{2}}=\alpha(s^{2}-s).\label{eq:prolificBranching}
\end{equation}
Let $\mathcal{M}_{a}(\R^{d})\subset\mathcal{M}_{F}(\R^{d})$ be the
space of finite atomic measures on $\R^{d}$. We shall write $\cbr{z_{t}}_{t\geq0}$
for a branching $\T{}$-motion whose total mass has generator given
by \eqref{eq:prolificBranching}. Hence $z$ is the $\mathcal{M}_{a}(\R^{d})$-valued
Markov process in which individuals from the moment of birth, live
for an independent and exponentially distributed period of time with
parameter $\psi'(\lambda^{*})=\alpha$ during which they execute an
Ornstein-Uhlenbeck diffusion issued from their position of birth and
at death they give birth at the same position to two offspring. Let
us stress that the backbone process does not suffer from extinction.
In our case the process $z$ is nothing else than the Ornstein-Uhlenbeck
branching process (studied in \citep{Adamczak:2011kx}). We shall
also refer to $z$ as the backbone (the name will become self-explanatory
soon). The initial configuration of $z$ is denoted by $\gamma\in\mathcal{M}_{a}(\R^{d})$.
Moreover, when referring to individuals in $z$ we may use the classical
Ulam-Harris notation, see for example \citep[p. 290]{Hardy:2006aa}.
The only feature that we really need of the Ulam-Harris notation is
that the individuals are uniquely identifiable amongst $\mathcal{T}$,
the set labels of individuals realized in $z$. For each individual
$u\in\mathcal{T}$ we shall write $\tau_{u}$ and $\sigma_{u}$ for
its birth and death times respectively and $\cbr{z_{u}(r):r\in[\tau_{u},\sigma_{u}]}$
for its spatial trajectory. 
\begin{defn}
\label{def:backboneConstruction}For $\nu\in\mathcal{M}_{{\rm a}}(\mathbb{R}^{d})$
and $\nu\in\mathcal{M}_{F}(\mathbb{R}^{d})$ let $z$ be the Ornstein-Uhlenbeck
branching process with initial configuration $\gamma$ and $\widetilde{X}$
an independent copy of $X$ under $\mathbb{P}_{\nu}^{*}$ (that is
with the subcritical branching mechanism function \eqref{eq:subcritical}).
Then we define a $\mathcal{M}_{F}(\mathbb{R}^{d})$-valued stochastic
process $\cbr{\Lambda_{t}}_{t\geq0}$ by 
\begin{equation}
\Lambda=\widetilde{X}+I^{\mathbb{N}^{*}},\label{eq:backboneConstruction}
\end{equation}
where the processes $\cbr{I^{\mathbb{N}^{*}}}_{t\geq0}$ is independent
of $\tilde{X}$. Moreover, this process is described path-wise as
follows (we note that the construction in \citep{Berestycki:2011kx}
contains more ingredients as it covers a larger class of branching
diffusions)
\begin{description}
\item [{Continuous~immigration}] The process $I^{\mathbb{N}^{*}}$ is
measure-valued on $\mathbb{R}^{d}$ such that 
\[
I_{t}^{\mathbb{N}^{*}}:=\sum_{u\in\mathcal{T}}\sum_{t\wedge\tau_{u}<r\leq t\wedge\sigma_{u}}X_{t-r}^{(1,u,r)},
\]
where, given $z$, independently for each $u\in\mathcal{T}$ such
that $\tau_{u}<t$, the processes $X_{\cdot}^{(1,u,r)}$ are countable
in number and correspond to $\mathcal{X}$-valued, Poissonian immigration
along the space-time trajectory $\{(z_{u}(r),r):r\in(\tau_{u},t\wedge\sigma_{u}]\}$
with rate $2\beta{\rm d}r\times{\rm d}\mathbb{N}_{z_{u}(r)}^{*}$.
To complete the definition we need to explain measures $\cbr{N_{x}^{*},x\in\R^{d}}$.
They are associated with the laws $\cbr{P_{\delta_{x}}^{*},x\in\R^{d}}$
defined on the same measurable space, namely
\begin{equation}
\mathbb{N}_{x}^{*}(1-e^{-\langle f,X_{t}\rangle})=-\log\mathbb{E}_{\delta_{x}}^{*}(e^{-\langle f,X_{t}\rangle}),\label{eq:dynkinKuznetzow}
\end{equation}
for all $f\in bp(\mathbb{R}^{d})$ and $t\geq0$. Such measures are
formally defined and explored in detail in \citep{Dynkin:2004fk}.
Intuitively speaking, the branching property implies that $\mathbb{P}_{\delta_{x}}^{*}$
is an infinitely divisible measure on the path space of $X$, $\mathcal{X}:=\mathcal{M}(\mathbb{R}^{d})\times[0,\infty)$,
and \eqref{eq:dynkinKuznetzow} is a `Lévy-Khinchine' formula in which
$\mathbb{N}_{x}^{*}$ plays the role of its `Lévy measure'. In this
sense, $\mathbb{N}_{x}^{*}$ can be considered as the `rate' at which
superprocesses `with zero initial mass' contribute to a unit mass
at position $x$. 
\end{description}
Moreover, we denote the law of $\Lambda$ by $\mathbf{P}_{\nu\times\gamma}$.
\end{defn}
We will now present the main result concerning the backbone construction.
First we randomize the law of $\mathbf{P}_{\nu\times\gamma}$ for
$\nu\in\mathcal{M}_{F}(\R^{d})$ by replacing the deterministic choice
of $\gamma$ with a Poisson random measure having intensity $\lambda^{*}|\nu|$.
We denote the resulting law by $\mathbf{P}_{\nu}$. We have \citep[Theorem 2]{Berestycki:2011kx}
\begin{thm}
\label{thm:backboneConstruction}For any $\nu\in\mathcal{M}_{F}(\mathbb{R}^{d})$,
the process $(\Lambda,\mathbf{P}_{\nu})$ is Markovian and has the
same law as $(X,\mathbb{P}_{\nu})$.
\end{thm}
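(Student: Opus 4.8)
The plan is to identify the two $\fMeas$-valued processes through their Laplace functionals: since the law of such a process is determined by the family of quantities $\mathbf{E}_{\nu}(\exp(-\sum_{i=1}^{k}\langle f_{i},\Lambda_{t_{i}}\rangle))$ with $f_{i}\in bp(\R^{d})$, it is enough to show these agree with the corresponding quantities for $(X,\mathbb{P}_{\nu})$ and, in passing, that $\Lambda$ is Markov. The first (conceptual) ingredient is a \emph{regeneration} property of the construction of Definition \ref{def:backboneConstruction}: conditionally on the natural filtration of $\Lambda$ up to a fixed time $t$, the process $(\Lambda_{t+s})_{s\geq0}$ is again distributed as the construction started from $\Lambda_{t}$. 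Granting this, a tower argument peels off the time points $t_{1}<\dots<t_{k}$ one at a time and reduces everything to the one-time Laplace functional, while simultaneously yielding the Markov property. I would establish the regeneration property by combining the Markov property of the backbone $z$, the Markov property and infinite divisibility of the subcritical superprocess $\widetilde{X}$ under $\mathbb{P}^{*}$, and the defining relation \eqref{eq:dynkinKuznetzow}: the clusters immigrated before $t$ along a backbone particle still alive at $t$, superposed with a fresh Poissonian dressing after $t$, recombine into exactly the immigration prescribed by the construction, precisely because $\mathbb{N}_{x}^{*}$ is the L\'evy measure of $\mathbb{P}_{\delta_{x}}^{*}$.

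For the one-time functional, fix $f\in bp(\R^{d})$ and $t\geq0$ and condition first on the backbone $z$. Independence of $\widetilde{X}$ and $I^{\mathbb{N}^{*}}$ factorises $\mathbf{E}_{\nu\times\gamma}(e^{-\langle f,\Lambda_{t}\rangle}\mid z)$; by \eqref{eq:logLaplace} the $\widetilde{X}$-factor is $\exp(-\langle u_{f}^{*}(\cdot,t),\nu\rangle)$, where $u_{f}^{*}$ is the non-negative solution of \eqref{eq:integralEquation} with $\psi$ replaced by the subcritical mechanism $\psi^{*}$ of \eqref{eq:subcritical}. For the immigration factor I would apply the exponential formula for Poisson point processes to the measure of immigrated clusters along the space-time trajectories of $z$ and use \eqref{eq:dynkinKuznetzow} to obtain $\exp(-2\beta\sum_{u\in\mathcal{T}}\int_{\tau_{u}\wedge t}^{\sigma_{u}\wedge t}u_{f}^{*}(z_{u}(r),t-r)\,\dd r)$. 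Writing $v(x,t)$ for the expectation of this last quantity when the backbone starts from $\delta_{x}$, and then Poissonising the backbone's initial configuration with intensity $\lambda^{*}\nu$, another application of the exponential formula gives $\mathbf{E}_{\nu}(e^{-\langle f,\Lambda_{t}\rangle})=\exp(-\langle u_{f}^{*}(\cdot,t)+\lambda^{*}(1-v(\cdot,t)),\nu\rangle)$.

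It then remains to prove the pointwise identity $u_{f}=u_{f}^{*}+\lambda^{*}(1-v)$; by uniqueness of non-negative solutions of \eqref{eq:integralEquation} this follows once the right-hand side is shown to solve that equation. For this I would derive a mild equation for $v$ by conditioning the backbone on its first branch time (exponential with parameter $\psi'(\lambda^{*})=\alpha$) and on the Ornstein--Uhlenbeck trajectory of the initial particle up to that time: along each lineage the immigration contributes a Feynman--Kac weight with potential $-2\beta u_{f}^{*}$, and at a branch into two particles the two subtrees are independent copies, which produces a quadratic term $v^{2}$ with coefficient dictated by \eqref{eq:prolificBranching}. Substituting this equation together with the equation for $u_{f}^{*}$ and the algebraic relations $\psi^{*}(\lambda)=\psi(\lambda+\lambda^{*})$, $\beta\lambda^{*}=\alpha$ and $F(s)=\frac{1}{\lambda^{*}}\psi(\lambda^{*}(1-s))$, a short computation shows that $u_{f}^{*}+\lambda^{*}(1-v)$ satisfies \eqref{eq:integralEquation} with the supercritical mechanism and initial datum $f$; hence it equals $u_{f}$, and $\mathbf{E}_{\nu}(e^{-\langle f,\Lambda_{t}\rangle})=\exp(-\langle u_{f}(\cdot,t),\nu\rangle)=\mathbb{E}_{\nu}(e^{-\langle f,X_{t}\rangle})$ by \eqref{eq:logLaplace}. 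Together with the regeneration step this gives equality of all finite-dimensional laws and the Markov property of $\Lambda$.

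The main obstacle is twofold, and both parts concern the interaction of the immigration with the two superprocess structures. First, in setting up the equation for $v$ one must keep careful track of the immigration potential along \emph{every} lineage of the backbone, not just the root's, so that the branching structure genuinely produces the quadratic term with the correct constant; once that is in place the verification that $u_{f}^{*}+\lambda^{*}(1-v)$ solves the supercritical equation is routine algebra. Second, and more delicate, is the regeneration/Markov argument, which is exactly where the Dynkin--Kuznetsov measure $\mathbb{N}^{*}$ of \eqref{eq:dynkinKuznetzow} does its essential work: one must check that the ``old'' immigrated clusters attached to backbone particles alive at time $t$ combine with a ``fresh'' dressing started at $t$ to reproduce the law of the construction issued from $\Lambda_{t}$.
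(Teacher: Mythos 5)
The paper does not actually prove this theorem: immediately before the statement the text reads ``We have \citep[Theorem 2]{Berestycki:2011kx}'' and the result is taken from that reference without any proof being reproduced in the present paper. So there is no ``paper's own proof'' to compare your attempt against.

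That said, your sketch is a correct reconstruction along the lines of the cited reference and I see no gap. The one-time Laplace functional computation is as you describe: independence factorises $\mathbf{E}_{\nu\times\gamma}(e^{-\langle f,\Lambda_t\rangle})$; the $\widetilde{X}$-factor is $\exp(-\langle u_f^*(\cdot,t),\nu\rangle)$; the immigration factor, via the exponential formula for Poisson random measures and the Dynkin--Kuznetsov relation \eqref{eq:dynkinKuznetzow}, gives conditionally on $z$ the weight $\exp\bigl(-2\beta\sum_{u\in\mathcal{T}}\int_{\tau_u\wedge t}^{\sigma_u\wedge t}u_f^*(z_u(r),t-r)\,\dd r\bigr)$; and Poissonising the backbone start with intensity $\lambda^*\nu$ yields $\exp(-\langle u_f^*+\lambda^*(1-v),\nu\rangle)$. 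The algebra at the end also closes: the first-branch / Feynman--Kac decomposition gives, after removing the multiplicative weight in the usual way, the mild equation
\[
v(x,t)=1-\int_0^t\T_s\bigl[(\alpha+2\beta u_f^*)v-\alpha v^2\bigr](\cdot,t-s)(x)\,\dd s,
\]
and since $\psi^*(\lambda)=\psi(\lambda+\lambda^*)$, $\psi(\lambda^*)=0$ and $\beta\lambda^*=\alpha$, one checks directly that
\[
\psi\bigl(u_f^*+\lambda^*(1-v)\bigr)-\psi^*(u_f^*)=-\lambda^*v\,(\alpha+2\beta u_f^*)+\alpha\lambda^*v^2,
\]
so $u_f^*+\lambda^*(1-v)$ satisfies the supercritical mild equation \eqref{eq:integralEq} with datum $f$ and therefore equals $u_f$ by uniqueness. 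Your handling of the Markov/regeneration step is also the right idea (it is where infinite divisibility under $\mathbb{P}^*_{\delta_x}$ and the r\^ole of $\mathbb{N}^*_x$ as its L\'evy measure are really used), though as you note it is the most delicate part to write out fully. In short: the proposal is sound, but the correct review note for this statement is that the paper cites it rather than proves it.
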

The construction above states that the superprocess can be seen as
immigration of a mass, which further will be called dressing, in a
Poissonian fashion along the backbone $z$. The backbone is a supercritical
process contrary to the fact that the immigrating mass follows the
superprocess dynamics with subcritical mechanism $\psi^{*}$. This,
up to some degree, means that the backbone is what really matters
and the superprocess can be regarded as a discrete entity. Following
the fact z is the Ornstein-Uhlenbeck branching process is that many
of proof techniques from \citep{Adamczak:2011kx} can be reused.

The backbone cannot die thus the set of extinction has a particularly
simple description for $(\Lambda,\mathbf{P}_{\nu})$. Namely, 
\[
Ext=\cbr{z_{0}=0},
\]
that is, the extinction holds only when no backbone particle ever
appeared.

We denote two families of processes. The first $\cbr{D_{t}^{s}}_{t\geq0}$
where $s\geq0$ is given by

\begin{equation}
D_{t}^{s}:=\sum_{u\in\mathcal{T}}\sum_{t\wedge\tau_{u}<r\leq t\wedge\sigma_{u}}X_{t-r+s}^{(1,u,r)}1_{r\leq s}.\label{eq:dressing}
\end{equation}
More intuitively speaking this process describes the evolution of
the dressing which appeared in the system before time $s$. The complementary
process is defined in terms of the backbone. Namely, given the $i$-th
particle of $z_{s}$, by $\cbr{\Gamma_{t}^{i,s}}_{t\geq0}$ we denote
process 
\begin{equation}
\Gamma_{t}^{i,s}:=\sum_{u\in\mathcal{T}^{i,s}}\sum_{t\wedge\tau_{u}<r\leq t\wedge\sigma_{u}}X_{t-r+s}^{(1,u,r)},\label{eq:singleProlific}
\end{equation}
where $\mathcal{T}^{i,s}$ is a (random) tree stemming from $i$-th
particle. Intuitively $\Gamma^{i,s}$ is a {}``sub-superprocess''
stemming from the $i$-th prolific individual at time $s$. Let us
recall \eqref{eq:backboneConstruction} and fix $s\geq0$, for any
$t\geq s$ we have 
\begin{equation}
X_{t}=X_{t}^{0}+D_{t-s}^{s}+\sum_{i=1}^{|z_{t}|}\Gamma_{t-s}^{i,s}.\label{eq:fundamentalDecomposition}
\end{equation}
Now we come back to the description of the proof strategy.
\begin{rem}
\label{rem:proofStrategy}The first two terms of \eqref{eq:fundamentalDecomposition}
are subcritical superprocesses and as such are negligible when $t\gg s$.
The third term is a sum of random variables indexed with the branching
process $z$ to which some techniques similar to \citep{Adamczak:2011kx}
can be applied. 

The proofs in \citep{Adamczak:2011kx} relied on the existence of
an explicit coupling of two Ornstein-Uhlenbeck processes \citep[Fact 4.1]{Adamczak:2011kx}.
In \citep{Adamczak:2011kx} this coupling can be \textquotedblleft{}transferred\textquotedblright{}
on the level of branching processes in a way that two coupled systems
shared the same genealogical structure and the corresponding particles
were coupled OU processes. The main advantage of such approach is
that it makes proofs conceptually clear. However in this paper we
decided not to follow this strategy. Reasons are twofold. Firstly,
transferring coupling to the superprocess level is less obvious (although
possible), secondly in the proofs below we use analytical notions
which should be easier to use for more general diffusions. Let us
note that beside these changes {}``the high level structure'' of
\citep{Adamczak:2011kx}'s proofs could be reused. The main idea taken
from \citep{Adamczak:2011kx} is to study the system on two different
time scales.
\end{rem}

\subsection{Ornstein-Uhlenbeck semigroup properties}

Throughout the proofs we will denote 
\[
\tilde{f}:=f-\ddp f{\eq}.
\]
We will now summarize properties of the Ornstein-Uhlenbeck semigroup
(with the infinitesimal operator \eqref{eq:infinitesimalOP}) which
we will use later
\begin{fact}
\label{fac:decay1}Let $f\in\pspace(\R^{d})$ and $\tilde{f}$ is
defined as above. Then there exist constants $C,n>0$ such that for
any $t\geq0$ we have

\begin{equation}
|\T tf(x)|\leq C\rbr{\norm x{}ne^{-\mu t}+1}.\label{eq:triavialEstimation}
\end{equation}
\begin{equation}
|\T t\tilde{f}(x)|\leq C(1+\norm x{}n)e^{-\mu t},\quad\T t\tilde{f}(0)\leq Ce^{-2\mu t}.\label{eq:decayCentered}
\end{equation}
We also have
\begin{equation}
\T tid(x)=e^{-\mu t}id(x),\label{eq:semigroupFirstEignenvalue}
\end{equation}
where $id(x)=x$. Moreover 
\[
\lim_{t\rightarrow+\infty}e^{\mu t}\T t\tilde{f}(x)=x\circ\ddp{\grad f}{\eq},
\]
where $\circ$ is the standard scalar product and $\varphi$ is given
by \eqref{eq:equilibrium}. Finally 
\begin{equation}
|e^{\mu t}\T t\tilde{f}(x)-x\circ\ddp{\grad f}{\eq}|\leq C(1+\norm x{}n)e^{-\mu t}.\label{eq:decaySpeed}
\end{equation}
\end{fact}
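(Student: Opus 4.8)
The plan is to derive everything from the explicit Mehler representation of the Ornstein--Uhlenbeck semigroup,
\[
\T tf(x)=\int_{\R^{d}}f\bigl(e^{-\mu t}x+z\bigr)\,g_{t}(z)\,\dd z,
\]
where $g_{t}$ is the density of $\mathcal N(0,v_{t}I_{d})$ with $v_{t}=\tfrac{\sigma^{2}}{2\mu}(1-e^{-2\mu t})$, which increases to $v_{\infty}:=\tfrac{\sigma^{2}}{2\mu}$, and $g_{\infty}=\eq$; this is simply the law of the solution of $\dd X_{t}=-\mu X_{t}\,\dd t+\sigma\,\dd W_{t}$, whose generator is \eqref{eq:infinitesimalOP}. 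Two of the assertions are then immediate. Plugging a polynomial bound $|f(y)|\le C(1+\norm y{}n)$ (with $n$ a positive integer) into the representation and using $\norm{e^{-\mu t}x+z}{}n\le 2^{n-1}(e^{-\mu t}\norm x{}n+\norm z{}n)$ together with $\int\norm z{}n g_{t}(z)\,\dd z\le\int\norm z{}n g_{\infty}(z)\,\dd z<\infty$ gives \eqref{eq:triavialEstimation}; and since $g_{t}$ is centred, $\T t\,id(x)=e^{-\mu t}x$, which is \eqref{eq:semigroupFirstEignenvalue}.

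For the behaviour of $\T t\tilde f$ near the origin I would first record that $\ddp{\tilde f}{\eq}=\int\tilde f\,g_{\infty}=0$ by the very definition of $\tilde f$, so that $\T t\tilde f(0)=\int\tilde f(z)\bigl(g_{t}(z)-g_{\infty}(z)\bigr)\,\dd z$. For $t\ge1$ the variances of the two Gaussians differ by $v_{\infty}-v_{t}=\tfrac{\sigma^{2}}{2\mu}e^{-2\mu t}$, and on the interval $[v_{1},v_{\infty}]$ the $v$-derivative of a Gaussian density is dominated pointwise by a fixed Gaussian times a polynomial; integrating this against a polynomially bounded weight yields $|\T t\tilde f(0)|\cleq e^{-2\mu t}$. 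For $t\in[0,1]$ the quantity $\T t\tilde f(0)$ is continuous in $t$, hence bounded, while $e^{-2\mu t}$ is bounded below there, so the estimate survives after enlarging the constant; this is the second half of \eqref{eq:decayCentered}.

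It remains to treat $x\ne0$ and to obtain the first-order expansion. Since $f$ need not be smooth, I would apply a one-step smoothing: for $t\ge1$ write $\T t\tilde f=\T{t-1}g$ with $g:=\T1\tilde f$, and note that differentiating under the Mehler integral shows $g\in C^{\infty}$ with $\partial^{\gamma}g\in\pspace$ for every $\gamma$ (each derivative only extracts a polynomial factor from the Gaussian kernel, and the integral stays polynomially bounded). A first-order Taylor expansion then gives
\[
\T{t-1}g(x)-\T{t-1}g(0)=e^{-\mu(t-1)}\,x\circ\int\grad g(z)\,g_{t-1}(z)\,\dd z+R_{t}(x),\qquad|R_{t}(x)|\cleq e^{-2\mu t}\bigl(1+\norm x{}n\bigr),
\]
where the power $n$ absorbs the polynomial degree arising from bounding the Hessian $\nabla^{2}g$. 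Applying the Gaussian-difference estimate of the previous paragraph once more, $\int\grad g(z)\,g_{t-1}(z)\,\dd z=\ddp{\grad g}{\eq}+O(e^{-2\mu t})$; and integrating by parts against $\eq$ (boundary terms vanish by Gaussian decay), using $\grad\eq(x)=-\tfrac{2\mu}{\sigma^{2}}x\,\eq(x)$, the self-adjointness of $\T1$ in $L^{2}(\eq)$, and the eigenrelation $\T1\,id=e^{-\mu}id$ just established, one gets $\ddp{\grad g}{\eq}=e^{-\mu}\ddp{\grad f}{\eq}$ (for $f\notin C^{1}$ the expression $\ddp{\grad f}{\eq}$ is read as $-\ddp{f}{\grad\eq}$, which makes sense for $f\in\pspace$). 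Since $e^{-\mu(t-1)}e^{-\mu}=e^{-\mu t}$, combining this with $|\T t\tilde f(0)|\cleq e^{-2\mu t}$ yields $\T t\tilde f(x)=e^{-\mu t}\,x\circ\ddp{\grad f}{\eq}+O\bigl(e^{-2\mu t}(1+\norm x{}n)\bigr)$ for a sufficiently large $n$; multiplying by $e^{\mu t}$ gives \eqref{eq:decaySpeed} (hence the stated limit) and also the first half of \eqref{eq:decayCentered}, the range $t\in[0,1]$ being again absorbed into the constants by continuity, and one takes for $n$ the largest degree occurring.

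The main obstacle is bookkeeping rather than conceptual: the Gaussian-perturbation estimates degenerate as $t\downarrow0$ (since $v_{t}\downarrow0$), which is exactly what forces the split into $t\ge1$ and $t\le1$ and the preliminary smoothing by $\T1$, and one must keep careful track of how the polynomial degrees grow through the Taylor remainder. The one genuinely non-mechanical point is the identity $\ddp{\grad g}{\eq}=e^{-\mu}\ddp{\grad f}{\eq}$: it produces the exact constant $x\circ\ddp{\grad f}{\eq}$ in the limit and relies on both self-adjointness of the OU semigroup with respect to $\eq$ and on $id$ being its eigenfunction.
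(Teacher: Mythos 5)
Your argument is correct, and it is genuinely more complete than what the paper itself presents. The paper's own proof establishes only \eqref{eq:triavialEstimation} from the Mehler representation $\T tf(x)=\mathbb{E}\,f(xe^{-\mu t}+ou(t)G)$ --- the same computation you carry out --- and then disposes of \eqref{eq:decayCentered}, \eqref{eq:semigroupFirstEignenvalue} and \eqref{eq:decaySpeed} by citing Lemma~4.3 of Adamczak--Mi\l{}o\'s. Your proposal fills that citation with a self-contained argument. The two devices doing the real work are (i) the preliminary one-step smoothing $g:=\T 1\tilde f$ (so that a first-order Taylor expansion of $g(e^{-\mu(t-1)}x+z)$ in the small shift $e^{-\mu(t-1)}x$ is legitimate and the Hessian remainder is polynomially controlled), and (ii) the duality identity $\ddp{\grad g}{\eq}=e^{-\mu}\ddp{\grad f}{\eq}$, which you derive from integration by parts against $\grad\eq=-\tfrac{2\mu}{\sigma^{2}}\,id\cdot\eq$, the self-adjointness of $\T 1$ in $L^{2}(\eq)$, and the eigenrelation $\T 1\,id=e^{-\mu}\,id$; this is what pins down the exact constant $x\circ\ddp{\grad f}{\eq}$ in \eqref{eq:decaySpeed}. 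The mean-value estimate $|g_{t}-g_{\infty}|\cleq(v_{\infty}-v_{t})\cdot(\text{polynomial})\cdot(\text{Gaussian})$ for $t\geq 1$ and the absorption of $t\in[0,1]$ into the constant are also handled correctly. One cosmetic slip: in your derivation of \eqref{eq:triavialEstimation} you write $\norm{e^{-\mu t}x+z}{}n\leq 2^{n-1}(e^{-\mu t}\norm x{}n+\norm z{}n)$, whereas the convexity inequality actually produces $e^{-n\mu t}\norm x{}n$; since $e^{-n\mu t}\leq e^{-\mu t}$ for $t\geq 0$ the conclusion is unaffected, but the step should be split into two inequalities to be literally correct.
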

\begin{proof}
As explained in \citep[(19)]{Adamczak:2011kx} we have $\T tf(x)=\ev{}f(xe^{-\mu t}+ou(t)G),$
where $G\sim\eq$ and $ou(t)=\sqrt{1-e^{-2\mu t}}$. Using the triangle
inequality and the binomial expansion one gets
\[
\T tf(x)=\ev{}f(xe^{-\mu t}+ou(t)G)\leq c\ev{}\norm{xe^{-\mu t}+ou(t)G}{}n\cleq\ev{}\rbr{\norm{xe^{-\mu t}}{}{}+\norm{ou(t)G}{}{}}^{n}\cleq\sum_{i=0}^{n}\norm{xe^{-\mu t}}{}i\ev{}\norm{ou(t)G}{}{n-i}.
\]
Now \eqref{eq:triavialEstimation} follows simply by the fact that
any moment of a Gaussian variable exists. The rest of the statements
were proved in \citep[Lemma 4.3]{Adamczak:2011kx}.
\end{proof}

\subsection{Moments calculation}

The main aim of this section is to calculate moments of $\ddp{\Gamma_{t}^{i,s}}f$
where $\Gamma_{t}^{i,s}$ is given by \eqref{eq:singleProlific}.
We will utilize moments up to order $4$ but, as it yields no additional
cost, we make some calculations for an arbitrary order.

Before the calculations we recall the generalization of the chain
rule, Faà di Bruno's formula, which states that
\begin{equation}
\frac{d^{k}}{dx^{k}}h(g(x))=\sum_{\mathbf{m}\in A_{k}}a_{\mathbf{m}}\cdot h^{(m_{1}+\cdots+m_{k})}(g(x))\cdot\prod_{j=1}^{k}\left(g^{(j)}(x)\right)^{m_{j}},\label{eq:diBrunoFormula}
\end{equation}
where $a_{m_{1},\ldots,m_{n}}:=\frac{k!}{m_{1}!\,1!^{m_{1}}\, m_{2}!\,2!^{m_{2}}\,\cdots\, m_{n}!\, n!^{m_{n}}}$
and the sum is over the set $A_{k}$ of all $k$-tuples of non-negative
integers $\mathbf{m}=(m_{1},\ldots,m_{k})$ satisfying the constraint
$\sum_{j=1}^{k}jm_{j}=k$. 

Let $f\in bp(\R^{d})$; we recall that $u_{f}$ is the solution of
\eqref{eq:integralEq}. We introduce an additional parameter $\theta>0$
and denote 
\[
\theta\mapsto u_{\theta f}(x,t)=\T t(\theta f)(x)-\int_{0}^{t}\T{t-s}\sbr{\psi(u_{\theta f}(\cdot,s))}(x)\dd s.
\]
It is obvious (as \eqref{eq:integralEq} has a unique non-negative
solution) that 
\[
u_{0f}(x,t)=0.
\]
Differentiating with respect to $\theta$ and using \eqref{eq:diBrunoFormula}
we get 
\[
\frac{\partial}{\partial\theta}u_{\theta f}(x,t)=\T tf(x)-\int_{0}^{t}\T{t-s}\sbr{\frac{\partial}{\partial\theta}u_{\theta f}(x,s)\psi'(u_{\theta f}(\cdot,s))}(x).
\]
\[
\frac{\partial^{k}}{\partial\theta^{k}}u_{\theta f}(x,t)=-\int_{0}^{t}\T{t-s}\sbr{\sum_{\mathbf{m}\in A_{k}}a_{\mathbf{m}}\psi^{(m_{1}+\cdots+m_{k})}(u_{\theta f}(x,s))\cdot\prod_{j=1}^{k}\left(\frac{\partial^{j}}{\partial\theta^{j}}u_{\theta f}(x,s)\right)^{m_{j}}}(x),\quad\text{for }k\geq2.
\]
The above calculation is formal. To legalize them we fix $T>0$ and
notice that by Fact\ref{fact:exponentailMomentsofTotalMass} there
exists $\theta_{T}>0$ such that $\ev{}e^{\theta\ddp f{X_{t}}}<+\infty$
for any $\theta\leq\theta_{T}$ and $t\leq T$. By the standard properties
of the Laplace transform we conclude that the derivatives $\frac{\partial^{k}}{\partial\theta^{k}}u_{\theta f}(x,t)$
exist for $\theta>-\theta_{T}$ and $t\leq T$ and are continuous
as function of $\theta$. Further, one need to apply standard calculus
tricks to conclude that the above formulas a valid for any $t\leq T$
and $\theta>-\theta_{T}$. Finally, for $\theta=0$ the formulas are
valid for any $t$.

We denote $u_{f}^{k}(x,t):=\left.\frac{\partial^{k}}{\partial\theta^{k}}u_{\theta f}(x,t)\right|_{\theta=0}$.
The above equations yield $u_{f}^{0}(x,t)=0$. Using \eqref{eq:branchingMechnism}
we obtain 
\[
u_{f}^{1}(x,t)=\T tf(x)+\alpha\int_{0}^{t}\T{t-s}\sbr{u_{f}^{1}(\cdot,s)}(x).
\]
\[
u_{f}^{k}(x,t)=-\int_{0}^{t}\T{t-s}\sbr{\sum_{\mathbf{m}\in A_{k}}a_{\mathbf{m}}\psi^{(m_{1}+\cdots+m_{k})}(0)\cdot\prod_{j=1}^{k}\left(u_{f}^{j}(x,s)\right)^{m_{j}}}(x),\quad\text{for }k\geq2.
\]
The first equation is solved by 
\begin{equation}
u_{f}^{1}(x,t)=\T t^{\alpha}f(x).\label{eq:superpocesMoment1}
\end{equation}
To treat the second one we denote $B_{k}:=A_{k}\backslash\cbr{(0,\ldots,0,1)}$
and notice that 
\[
u_{f}^{k}(x,t)=-\int_{0}^{t}\T{t-s}\sbr{-\alpha u_{f}^{1}(x,s)+\sum_{\mathbf{m}\in B_{k}}a_{\mathbf{m}}\psi^{(m_{1}+\cdots+m_{k})}(0)\cdot\prod_{j=1}^{k}\left(u_{f}^{j}(x,s)\right)^{m_{j}}}(x),\quad\text{for }k\geq2.
\]
It is solved by
\begin{equation}
u_{f}^{k}(x,t)=-\int_{0}^{t}\T{t-s}^{\alpha}\sbr{\sum_{\mathbf{m}\in B_{k}}a_{\mathbf{m}}\psi^{(m_{1}+\cdots+m_{k})}(0)\cdot\prod_{j=1}^{k}\left(u_{f}^{j}(x,s)\right)^{m_{j}}}(x),\quad\text{for }k\geq2.\label{eq:diBruno}
\end{equation}
The above equations makes it possible to calculate recursively any
$u_{f}^{k}$. Performing this operation for $k\leq4$ (which is all
we need in this paper) and taking into account the special form of
$\psi$ we get 
\begin{equation}
u_{f}^{2}(x,t)=-2\beta\intc t\T{t-s}^{\alpha}\sbr{\rbr{\T s^{\alpha}f(\cdot)}^{2}}(x)\dd s.\label{eq:superprocesMoment2}
\end{equation}
\begin{equation}
u_{f}^{3}(x,t)=-6\beta\int_{0}^{t}\T{t-s}^{\alpha}\sbr{u_{f}^{1}(\cdot,s)u_{f}^{2}(\cdot,s)}(x),\label{eq:superprocessMoment3}
\end{equation}
\begin{equation}
u_{f}^{4}(x,t)=-\beta\int_{0}^{t}\T{t-s}^{\alpha}\sbr{4u_{f}^{1}(\cdot,s)u_{f}^{3}(\cdot,s)+\rbr{u_{f}^{2}(\cdot,s)}^{2}}(x),\label{eq:superprocessMoment4}
\end{equation}
The same calculation are valid for the superprocess with branching
mechanism $\psi^{*}$ given by \eqref{eq:subcritical} ( which requires
only changing $\alpha$ to $-\alpha$). We will denote the quantities
corresponding to the system with $\psi^{*}$ using additional superscript
$^{*}$. 

Let us now prove some properties of $u_{f}^{k}$. 
\begin{fact}
\label{fac:misc1}Equations \eqref{eq:superpocesMoment1},\eqref{eq:superprocesMoment2}-\eqref{eq:superprocessMoment4}
are well-defined for any $f\in\pspace(\R^{d})$. Moreover, given $f\in\mathcal{C}(\R^{d})$,
there exist $C,n>0$ such that \textup{
\begin{equation}
|u_{f}^{2}(x,t)|\leq Ce^{2\alpha t}(\norm x{}n+1).\label{eq:tmp19}
\end{equation}
\begin{equation}
|u_{f}^{*,2}(x,t)|\leq Ce^{-\alpha t}(\norm x{}n+1).\label{eq:estimaiton77}
\end{equation}
We have also 
\begin{equation}
|u_{f}^{*,3}(x,t)|\leq Ce^{-\alpha t}(\norm x{}n+1),\quad|u^{*,4}(x,t)|\leq Ce^{-\alpha t}(\norm x{}n+1).\label{eq:tildeEstimate}
\end{equation}
}
\end{fact}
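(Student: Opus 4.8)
The plan is to establish all the estimates by working recursively through the formulas \eqref{eq:superpocesMoment1}, \eqref{eq:superprocesMoment2}--\eqref{eq:superprocessMoment4}, using as the only analytic input the decay estimate \eqref{eq:triavialEstimation} from Fact~\ref{fac:decay1}, together with its obvious analogue for the rescaled semigroup: since $\T t^{a}f(x)=e^{at}\T tf(x)$, \eqref{eq:triavialEstimation} gives $|\T t^{a}f(x)|\leq Ce^{at}(\norm x{}ne^{-\mu t}+1)\leq Ce^{at}(\norm x{}n+1)$, and also $|\T t^{a}[g](x)|\leq Ce^{at}(\norm x{}n+1)$ whenever $|g(y)|\leq C'(\norm y{}m+1)$, because the OU semigroup maps polynomially bounded functions to polynomially bounded functions with a controlled increase of degree (this is exactly the content of the computation in the proof of Fact~\ref{fac:decay1}: $\T tg(x)\leq c\,\ev{}\norm{xe^{-\mu t}+ou(t)G}{}{m}$). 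Well-definedness of the integrals is then immediate from these bounds, since each integrand is bounded by an exponential in $s$ times a polynomial in $x$, hence integrable on $[0,t]$.

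First I would prove \eqref{eq:tmp19}. From \eqref{eq:superprocesMoment2}, $u_{f}^{2}(x,t)=-2\beta\intc t\T{t-s}^{\alpha}[(\T s^{\alpha}f)^{2}](x)\,\dd s$. By \eqref{eq:triavialEstimation} applied to $\T s^{\alpha}f$ we have $|\T s^{\alpha}f(x)|\leq Ce^{\alpha s}(\norm x{}n+1)$, so $(\T s^{\alpha}f(x))^{2}\leq Ce^{2\alpha s}(\norm x{}{2n}+1)$; applying the semigroup mapping property to $\T{t-s}^{\alpha}$ yields $|\T{t-s}^{\alpha}[(\T s^{\alpha}f)^{2}](x)|\leq Ce^{\alpha(t-s)}e^{2\alpha s}(\norm x{}{n'}+1)$ for a possibly larger degree $n'$. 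Integrating in $s$ over $[0,t]$ produces $\intc t e^{\alpha(t-s)}e^{2\alpha s}\,\dd s=e^{\alpha t}\intc t e^{\alpha s}\,\dd s\leq C e^{2\alpha t}$, giving exactly \eqref{eq:tmp19}. For \eqref{eq:estimaiton77} one runs the identical computation with $\alpha$ replaced by $-\alpha$ throughout (this is the superprocess with branching mechanism $\psi^{*}$), so $|\T s^{-\alpha}f(x)|\leq Ce^{-\alpha s}(\norm x{}n+1)$ implies $(\T s^{-\alpha}f(x))^{2}\leq Ce^{-2\alpha s}(\norm x{}{2n}+1)$, and $\intc t e^{-\alpha(t-s)}e^{-2\alpha s}\,\dd s=e^{-\alpha t}\intc t e^{-\alpha s}\,\dd s\leq Ce^{-\alpha t}$, giving \eqref{eq:estimaiton77}.

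Then the two estimates in \eqref{eq:tildeEstimate} follow by the same bootstrapping. For $u_{f}^{*,3}$, use \eqref{eq:superprocessMoment3} in its starred form: $u_{f}^{*,3}(x,t)=-6\beta\intc t\T{t-s}^{-\alpha}[u_{f}^{*,1}(\cdot,s)u_{f}^{*,2}(\cdot,s)](x)\,\dd s$. Here $|u_{f}^{*,1}(x,s)|=|\T s^{-\alpha}f(x)|\leq Ce^{-\alpha s}(\norm x{}n+1)$ and $|u_{f}^{*,2}(x,s)|\leq Ce^{-\alpha s}(\norm x{}n+1)$ by the already-proved \eqref{eq:estimaiton77}, so the product is bounded by $Ce^{-2\alpha s}(\norm x{}{n'}+1)$; applying $\T{t-s}^{-\alpha}$ gives $Ce^{-\alpha(t-s)}e^{-2\alpha s}(\norm x{}{n''}+1)$, and integrating yields $Ce^{-\alpha t}(\norm x{}{n''}+1)$. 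For $u_{f}^{*,4}$, use \eqref{eq:superprocessMoment4} in starred form, bounding $u_{f}^{*,1}u_{f}^{*,3}$ by $Ce^{-2\alpha s}(\norm x{}{n'}+1)$ (using the bound on $u_{f}^{*,3}$ just obtained) and $(u_{f}^{*,2})^{2}$ by $Ce^{-2\alpha s}(\norm x{}{n'}+1)$, then proceeding as before.

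The bookkeeping is entirely routine; the only point requiring a modicum of care is that each application of the OU semigroup to a polynomially bounded function raises the polynomial degree, so I would either fix a single degree $n$ at the outset that is large enough for all four estimates (every iterate we encounter has degree bounded by an explicit finite quantity, since we only go up to $k=4$ and the recursion is finite) or simply note that the constants $C,n$ in the statement are allowed to depend on $f$. The ``main obstacle'', if any, is purely notational: keeping the exponents straight between the supercritical ($\alpha$) and subcritical ($-\alpha$) cases and observing that in the subcritical case every factor carries a decaying exponential, so the $s$-integral over $[0,t]$ converges and contributes only another $e^{-\alpha t}$ rather than spoiling the decay. There is no genuine analytic difficulty beyond Fact~\ref{fac:decay1}, which has already been established.
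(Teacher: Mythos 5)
Your proposal is correct and follows essentially the same route as the paper: bootstrap through the recursive formulas \eqref{eq:superpocesMoment1}, \eqref{eq:superprocesMoment2}--\eqref{eq:superprocessMoment4}, bounding each application of $\T{}$ via Fact~\ref{fac:decay1} and integrating the resulting exponential factors over $[0,t]$. The only cosmetic difference is that the paper keeps the extra $e^{-\mu(t-s)}$ decay on the polynomial part of \eqref{eq:triavialEstimation} while you drop it (which is harmless here); also note the OU semigroup actually does not raise the polynomial degree, so your allowance for a ``controlled increase'' is overly cautious rather than necessary.
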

Let us note that potentially we can obtain better constants in some
estimation. E.g. {}``$n$'' in \eqref{eq:estimaiton77} could be
smaller than in \eqref{eq:tildeEstimate}. However exact constants
are not important in our proofs.
\begin{proof}
Formulas \eqref{eq:diBruno}-\eqref{eq:superprocessMoment4} were
obtained for $f\in bp(\R^{d})$ but can be {}``upgraded'' to $f\in\pspace$
by means of iterated application of standard integral-theoretic reasonings
and Fact \ref{fac:decay1}. \eqref{eq:tmp19} follows by Fact \ref{fac:decay1}
and the following calculations 
\[
|u_{f}^{2}(x,t)|\cleq\intc t\T{t-s}^{\alpha}\sbr{e^{2\alpha s}\rbr{\norm{\cdot}{}n+1}^{2}}(x)\cleq e^{\alpha t}\intc te^{\alpha s}(\norm x{}{2n}e^{-\mu(t-s)}+1)\dd s\cleq e^{2\alpha t}(\norm x{}{2n}+1),
\]
and in the same vein we obtain \eqref{eq:estimaiton77}. In order
to prove \eqref{eq:tildeEstimate} we utilize \eqref{eq:superprocessMoment3}
and Fact \ref{fac:decay1} 
\begin{multline*}
|u_{f}^{*,3}(x,t)|\leq\int_{0}^{t}\T{t-s}^{-\alpha}\sbr{\left|u_{f}^{*,1}(\cdot,s)u_{f}^{*,2}(\cdot,s)\right|}(x)\dd s\\
\cleq e^{-\alpha t}\int_{0}^{t}e^{\alpha s}\T{t-s}\sbr{e^{-\alpha s}(1+\norm{\cdot}{}n)e^{-\alpha s}(1+\norm{\cdot}{}n)}(x)\dd s\cleq e^{-\alpha t}(1+\norm{\cdot}{}{n_{1}}),
\end{multline*}
for some $n_{1}\geq0$. The case of $u_{f}^{4}(x,t)$ follows similarly.
By \eqref{eq:superprocessMoment4} we have
\begin{multline*}
|u_{f}^{*,4}(x,t)|\cleq\int_{0}^{t}\T{t-s}^{-\alpha}\sbr{\left|u_{f}^{*,1}(\cdot,s)u_{f}^{*,3}(\cdot,s)\right|+\left|u_{f}^{*,2}(\cdot,s)\right|^{2}}(x)\dd s\\
\cleq e^{-\alpha t}\int_{0}^{t}e^{-\alpha s}\T{t-s}\sbr{(1+\norm{\cdot}{}{3n})}(x)\dd s\cleq e^{-\alpha t}(1+\norm x{}{n_{2}}),
\end{multline*}
 for some $n_{2}\geq0$. 
\end{proof}
Now let us recall to the backbone construction given in Definition
\ref{def:backboneConstruction}. We fix $f\in bp(\R^{d})$ and apply
\citep[Theorem 1]{Berestycki:2011kx} with $\mu=0,\nu=\delta_{x},f=\theta f,h=0$.
This yields
\[
\mathbf{E}_{0\times\delta_{x}}\rbr{e^{-\langle\theta f,\Lambda_{t}\rangle}}=V_{\theta f}(x,t),
\]
where $V_{\theta f}(x,t)$ is the unique $[0,1]$-valued solution
to the integral equation
\[
V_{\theta f}(x,t)=1+\frac{\beta}{\alpha}\int_{0}^{t}\T{t-s}\sbr{\psi^{*}\rbr{-\frac{\alpha}{\beta}V_{\theta f}(\cdot,s)+u_{\theta f}^{*}(\cdot,s)}-\psi^{*}(u_{\theta f}^{*}(\cdot,s))}(x){\rm d}s,
\]
(we recall that $u^{*}$ is the solution of \eqref{eq:integralEq}
with the branching mechanism $\psi^{*}$). Obviously, we have 
\[
V_{0}(x,t)=1.
\]
Using \eqref{eq:diBrunoFormula} we obtain (one has to justify the
validity of the below calculations in the same spirit as for $u_{\theta f}$.
We skip some arguments to make expressions more clear)
\begin{multline*}
\frac{\partial^{k}V_{\theta f}}{\partial\theta^{k}}=\frac{\beta}{\alpha}\int_{0}^{t}\T{t-s}\left[\sum_{\mathbf{m}\in A_{k}}a_{\mathbf{m}}\psi^{*(m_{1}+\ldots+m_{k})}(-\frac{\alpha}{\beta}V_{\theta f}+u_{\theta f}^{*})\prod_{j=1}^{k}\rbr{-\frac{\alpha}{\beta}\frac{\partial^{j}V_{\theta f}}{\partial\theta^{j}}+\frac{\partial^{j}u_{\theta f}^{*}}{\partial\theta^{j}}}^{m_{j}}\right.\\
\left.-\sum_{\mathbf{m}\in A_{k}}a_{\mathbf{m}}\psi^{*(m_{1}+\ldots+m_{k})}(u_{\theta f}^{*})\prod_{j=1}^{k}\rbr{\frac{\partial^{j}u_{\theta f}^{*}}{\partial\theta^{j}}}^{m_{j}}\right](x){\rm d}s,\quad k\geq1,
\end{multline*}
We denote $V_{f}^{k}:=\left.\frac{\partial^{k}V_{\theta f,0}}{\partial\theta^{k}}\right|_{\theta=0}$
. The above equation yields
\begin{multline*}
V_{f}^{k}(x,t)=\frac{\beta}{\alpha}\int_{0}^{t}\T{t-s}\left[\sum_{\mathbf{m}\in A_{k}}a_{\mathbf{m}}\psi^{*(m_{1}+\ldots+m_{k})}(-\alpha/\beta)\prod_{j=1}^{k}\rbr{-\frac{\alpha}{\beta}V_{f}^{j}+u_{f}^{*,j}}^{m_{j}}\right.\\
\left.-\sum_{\mathbf{m}\in A_{k}}a_{\mathbf{m}}\psi^{*(m_{1}+\ldots+m_{n})}(0)\prod_{j=1}^{k}\rbr{u_{f}^{*,j}}^{m_{j}}\right](x){\rm d}s.
\end{multline*}
We recall that $B_{k}=A_{k}\backslash\cbr{(0,\ldots,0,1)}$ and $u_{0}^{*}=0,V_{0}=1$.
Moreover $\star{\psi}'(-\frac{\alpha}{\beta})=-2\beta\frac{\alpha}{\beta}+\alpha=-\alpha$
and $\star{\psi}'(0)=\alpha$. Therefore
\begin{multline*}
V_{f}^{k}(x,t)=\frac{\beta}{\alpha}\int_{0}^{t}\T{t-s}\left[-\alpha\rbr{-\frac{\alpha}{\beta}V_{f}^{k}+u_{f}^{*,k}}+\sum_{\mathbf{m}\in B_{k}}a_{\mathbf{m}}\psi^{*(m_{1}+\ldots+m_{k})}(-\alpha/\beta)\prod_{j=1}^{k}\rbr{-\frac{\alpha}{\beta}V_{f}^{j}+u_{f}^{*,j}}^{m_{j}}\right.\\
\left.-\alpha u_{f}^{*,k}-\sum_{\mathbf{m}\in B_{k}}a_{\mathbf{m}}\psi^{*(m_{1}+\ldots+m_{k})}(0)\prod_{j=1}^{k}\rbr{u_{f}^{*,j}}^{m_{j}}\right](x){\rm d}s.
\end{multline*}
This equation is solved by
\begin{multline}
V_{f}^{k}(x,t)=\frac{\beta}{\alpha}\int_{0}^{t}\T{t-s}^{\alpha}\left[\sum_{\mathbf{m}\in B_{k}}a_{\mathbf{m}}\psi^{*(m_{1}+\ldots+m_{k})}(-\alpha/\beta)\prod_{j=1}^{k}\rbr{-\frac{\alpha}{\beta}V_{f}^{j}+u_{f}^{*,j}}^{m_{j}}\right.\\
\left.-2\alpha u_{f}^{*,k}-\sum_{\mathbf{m}\in B_{k}}a_{\mathbf{m}}\psi^{*(m_{1}+\ldots+m_{k})}(0)\prod_{j=1}^{k}\rbr{u_{f}^{*,j}}^{m_{j}}\right](x){\rm d}s.\label{eq:generalEquation}
\end{multline}
We list now some properties of $V_{f}^{k}$ used in the proofs below
below. 
\begin{fact}
\textup{\label{fac:backBoneFacts}For any $f\in\pspace(\R^{d})$ we
have 
\begin{equation}
\mathbf{E}_{0\times\delta_{x}}\langle f,\Lambda_{t}\rangle^{k}=(-1)^{k}V_{f}^{k}(x,t),\quad k\in\mathbb{N},\label{eq:backBoneMoments}
\end{equation}
}
\begin{equation}
V_{f}^{1}(x,t)=-2\frac{\beta}{\alpha}\T tf(x)\sinh(\alpha t),\label{eq:mean}
\end{equation}
\begin{equation}
V_{f}^{2}(x,t)=\frac{\beta}{\alpha}\int_{0}^{t}\T{t-s}^{\alpha}\sbr{2\beta\rbr{\T s^{\alpha}f(\cdot)}^{2}-2\beta\rbr{u_{f}^{*,1}(\cdot,s)}^{2}-2\alpha u_{f}^{*,2}(\cdot,s)}(x)\dd s.\label{eq:secondMoment}
\end{equation}
Moreover, we have
\begin{equation}
V_{f}^{2}(x,t)\leq Ce^{2\alpha t}(\norm x{}{2n}+1).\label{eq:estimationAAA}
\end{equation}
\end{fact}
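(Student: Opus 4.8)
The plan is to read off all four assertions from the master equation \eqref{eq:generalEquation} specialised to $k=1,2$, after first upgrading the Laplace-transform identity $\mathbf{E}_{0\times\delta_{x}}(e^{-\ddp{\theta f}{\Lambda_{t}}})=V_{\theta f}(x,t)$ to a genuine moment expansion. For \eqref{eq:backBoneMoments} I would differentiate this identity $k$ times in $\theta$ and set $\theta=0$, which formally gives $\mathbf{E}_{0\times\delta_{x}}\rbr{(-\ddp f{\Lambda_{t}})^{k}}=V_{f}^{k}(x,t)$, i.e.\ \eqref{eq:backBoneMoments}. The interchange of $\partial_{\theta}^{k}$ with the expectation is justified exactly as for $u_{\theta f}$ above: fix $T>0$, use Fact \ref{fact:exponentailMomentsofTotalMass} (recall $(\Lambda,\mathbf{P}_{\nu})$ has the law of $(X,\mathbb{P}_{\nu})$ by Theorem \ref{thm:backboneConstruction}) to get a $\theta_{T}>0$ with $\mathbf{E}_{0\times\delta_{x}}e^{\theta\ddp f{\Lambda_{t}}}<+\infty$ for $|\theta|\le\theta_{T}$ and $t\le T$, conclude that $\theta\mapsto V_{\theta f}(x,t)$ is smooth near $0$ with derivatives continuous in $\theta$ and that \eqref{eq:generalEquation} holds there, then let $T\conv+\infty$.

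For $k=1$: here $B_{1}=A_{1}\setminus\cbr{(1)}=\emptyset$, so \eqref{eq:generalEquation} collapses to $V_{f}^{1}(x,t)=-2\beta\intc t\T{t-s}^{\alpha}\sbr{u_{f}^{*,1}(\cdot,s)}(x)\dd s$. The starred version of \eqref{eq:superpocesMoment1} is $u_{f}^{*,1}(x,s)=\T s^{-\alpha}f(x)=e^{-\alpha s}\T sf(x)$, so the semigroup identity $\T{t-s}\T sf=\T tf$ gives $V_{f}^{1}(x,t)=-2\beta\,\T tf(x)\intc t e^{\alpha(t-s)}e^{-\alpha s}\dd s=-\frac{\beta}{\alpha}\T tf(x)\rbr{e^{\alpha t}-e^{-\alpha t}}$, which is \eqref{eq:mean}. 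For $k=2$: $A_{2}=\cbr{(2,0),(0,1)}$, hence $B_{2}=\cbr{(2,0)}$ with $a_{(2,0)}=1$, and $\psi^{*\prime\prime}\equiv2\beta$; thus \eqref{eq:generalEquation} reads
\[
V_{f}^{2}(x,t)=\frac{\beta}{\alpha}\intc t\T{t-s}^{\alpha}\sbr{2\beta\rbr{-\frac{\alpha}{\beta}V_{f}^{1}(\cdot,s)+u_{f}^{*,1}(\cdot,s)}^{2}-2\beta\rbr{u_{f}^{*,1}(\cdot,s)}^{2}-2\alpha u_{f}^{*,2}(\cdot,s)}(x)\dd s.
\]
By the previous step $-\frac{\alpha}{\beta}V_{f}^{1}(x,s)=\T sf(x)\rbr{e^{\alpha s}-e^{-\alpha s}}$, so $-\frac{\alpha}{\beta}V_{f}^{1}(x,s)+u_{f}^{*,1}(x,s)=e^{\alpha s}\T sf(x)=\T s^{\alpha}f(x)$; inserting this identity yields exactly \eqref{eq:secondMoment}.

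Finally \eqref{eq:estimationAAA} follows from \eqref{eq:secondMoment} using the estimates already at hand: Fact \ref{fac:decay1} gives $|\T s^{\alpha}f(x)|\cleq e^{\alpha s}(1+\norm x{}n)$, whence $\rbr{\T s^{\alpha}f(x)}^{2}\cleq e^{2\alpha s}(1+\norm x{}{2n})$; likewise $\rbr{u_{f}^{*,1}(x,s)}^{2}\cleq 1+\norm x{}{2n}$, and $|u_{f}^{*,2}(x,s)|\cleq e^{-\alpha s}(1+\norm x{}{2n})$ by \eqref{eq:estimaiton77} (after enlarging $n$). Since $\T r\sbr{1+\norm\cdot{}{2n}}(x)\cleq 1+\norm x{}{2n}$, the bracket in \eqref{eq:secondMoment} is $\cleq e^{2\alpha s}(1+\norm x{}{2n})$ and $|V_{f}^{2}(x,t)|\cleq\intc t e^{\alpha(t-s)}e^{2\alpha s}(1+\norm x{}{2n})\dd s\cleq e^{2\alpha t}(1+\norm x{}{2n})$, which is \eqref{eq:estimationAAA}.

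The only genuinely non-routine point is the first step: making the moment identity \eqref{eq:backBoneMoments} rigorous — the smoothness of $\theta\mapsto V_{\theta f}$ on a neighbourhood of $0$ and the validity of \eqref{eq:generalEquation} there (rather than merely its formal derivation). This, however, is a verbatim repetition of the corresponding analysis carried out above for $u_{\theta f}$ and introduces nothing new. The remaining steps are bookkeeping with di Bruno's formula, the semigroup property, and Facts \ref{fac:decay1}--\ref{fac:misc1}.
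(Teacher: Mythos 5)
Your proof is correct and follows essentially the same route as the paper: read \eqref{eq:backBoneMoments} off the Laplace transform by differentiating $V_{\theta f}$ at $\theta=0$, then specialise the master formula \eqref{eq:generalEquation} to $k=1,2$, use the starred version of \eqref{eq:superpocesMoment1} and the semigroup property to get \eqref{eq:mean}, substitute $-\frac{\alpha}{\beta}V_{f}^{1}+u_{f}^{*,1}=\T s^{\alpha}f$ to get \eqref{eq:secondMoment}, and bound the bracket via Fact \ref{fac:decay1} and \eqref{eq:estimaiton77} for \eqref{eq:estimationAAA}. You are more explicit than the paper about the di Bruno bookkeeping ($B_{1}=\emptyset$, $B_{2}=\cbr{(2,0)}$, $a_{(2,0)}=1$), which is a clarity gain but not a different idea. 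One small imprecision worth noting: $\mathbf{P}_{0\times\delta_{x}}$ is not of the form $\mathbf{P}_{\nu}$, so Theorem \ref{thm:backboneConstruction} does not literally supply the exponential moment bound; what is actually needed is that the backbone under $\mathbf{P}_{0\times\delta_{x}}$ is a pure-birth process dressed with subcritical superprocesses, whose total mass has exponential moments on compact time intervals by the same ODE argument as in Fact \ref{fact:exponentailMomentsofTotalMass} — the paper also omits this, labelling \eqref{eq:backBoneMoments} ``completely standard.''
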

\begin{proof}
Analogously as in the proof in Fact \ref{fac:misc1} one can argue
that the formulas proved for $f\in bp(\R^{d})$ could be also used
to $f\in\pspace$ . Obtaining \eqref{eq:backBoneMoments} is completely
standard. \eqref{eq:mean} can be obtained by a simple calculation,
namely using \eqref{eq:generalEquation} and \eqref{eq:diBruno} 
\[
V_{f}^{1}(x,t)=-2\beta\int_{0}^{t}\T{t-s}^{\alpha}\sbr{u_{f}^{*,1}(\cdot,s)}(x)\dd s.
\]
By \eqref{eq:superpocesMoment1} it follows 
\[
V_{f}^{1}(x,t)=-2\beta\int_{0}^{t}e^{\alpha(t-s)}\T{t-s}\sbr{e^{-\alpha s}\T sf(\cdot)}(x)\dd s=-2\beta e^{\alpha t}\T tf(x)\intc te^{-2\alpha s}\dd s=-2\frac{\beta}{\alpha}\T tf(x)\sinh(\alpha t).
\]
To prove \eqref{eq:secondMoment} we again use\eqref{eq:generalEquation}
\[
V_{f}^{2}(x,t)=\frac{\beta}{\alpha}\int_{0}^{t}\T{t-s}^{\alpha}\sbr{2\beta\rbr{-\frac{\alpha}{\beta}V_{f}^{1}(\cdot,s)+u_{f}^{*,1}(\cdot,s)}^{2}-2\beta\rbr{u_{f}^{*,1}(\cdot,s)}^{2}-2\alpha u_{f}^{*,2}(\cdot,s)}(x)\dd s.
\]
We notice that $-\frac{\alpha}{\beta}V_{f}^{1}(x,s)+u_{f}^{*,1}(x,s)=\T s^{\alpha}f(x)$
which is enough to prove \eqref{eq:secondMoment}. \eqref{eq:estimationAAA}
holds by \eqref{eq:estimaiton77}, \eqref{eq:triavialEstimation}
and the following calculation
\begin{multline*}
V_{f}^{2}(x,t)\cleq\int_{0}^{t}\T{t-s}^{\alpha}\sbr{\rbr{e^{\alpha s}\T sf(\cdot)}^{2}+\rbr{e^{-\alpha s}\T sf(\cdot)}^{2}+|u_{f}^{*,2}(\cdot,s)|}(x)\dd s\\
\cleq e^{\alpha t}\int_{0}^{t}e^{\alpha s}\T{t-s}\sbr{(\norm{\cdot}{}n+1)^{2}}(x)\dd s\cleq e^{2\alpha t}(\norm x{}{2n}+1).
\end{multline*}

\end{proof}

\subsection{Dressing behavior}

Let us recall {}``the dressing process'' $\cbr{D_{t}^{s}}_{t\geq0}$
defined by \eqref{eq:dressing}. It is a superprocess which at time
$t=0$ is distributed as $X_{s}$ and then evolve according to subcritical
dynamics with the branching mechanism $\psi^{*}$. Using the Markov
property in Theorem \ref{thm:backboneConstruction} we obtain
\begin{cor}
\label{cor:dressing}Let $f\in bp(\R^{d})$ then
\[
\mathbb{E}_{\nu}(e^{-\langle f,D_{t}^{s}\rangle})=\ev{}_{\nu}\ev{}_{X_{s}}^{*}(e^{-\ddp f{X_{t}^{*}}}=\ev{}_{\nu}\exp\cbr{-\int_{\mathbb{R}^{d}}u_{f}^{*}(x,t)X_{s}({\rm d}x)}=\exp\cbr{-\int_{\R^{d}}v_{f}(x,s;t)\nu(\dd x)},
\]
where $u_{f}^{*}$ is defined as \eqref{eq:integralEquation} with
$\psi^{*}$ instead of $\psi$ and $v_{f}(x,s;t)$ is the solution
of 
\[
v_{f}(x,t;s)=\T t(u_{f}^{*}(\cdot,s))(x)-\beta\int_{0}^{t}\T u^{\alpha}[v_{f}(\cdot,t-u;s)^{2}](x)\dd u.
\]
\end{cor}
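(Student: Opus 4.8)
The statement to be proved is Corollary~\ref{cor:dressing}, which computes the Laplace functional of the dressing process $D_t^s$. The plan is to unwind the definition \eqref{eq:dressing} of $D_t^s$ using the structure of the backbone construction in Theorem~\ref{thm:backboneConstruction}, then apply the Markov property at time $s$, and finally identify the resulting double exponential as the solution of the stated integral equation.

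First I would observe that, by its very definition \eqref{eq:dressing}, the process $t\mapsto D_t^s$ is precisely the measure-valued process obtained by running the subcritical ($\psi^*$) superprocess dynamics, started from the random configuration that is the total dressing present in the system at time $s$. More precisely: conditionally on $\mathcal{F}_s$ (the $\sigma$-field up to time $s$ in the backbone construction), the immigrated mass present at time $s$ along the backbone is distributed — after summing all the Poissonian contributions $X^{(1,u,r)}$ with $r\le s$ — exactly as a copy of $X$ under $\mathbb{P}^*$ started from $X_s$ (here using that $D_0^s$ together with the backbone particles recovers $X_s$ via \eqref{eq:fundamentalDecomposition} at $t=s$, and that the $\psi^*$-superprocess dynamics governs the dressed mass). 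Hence, conditioning on $\mathcal{F}_s$ and applying the branching/Markov property for the subcritical superprocess together with the log-Laplace equation \eqref{eq:logLaplace}–\eqref{eq:integralEq} for $\psi^*$, I get
\[
\mathbb{E}_{\nu}\bigl(e^{-\langle f,D_t^s\rangle}\,\big|\,\mathcal{F}_s\bigr)=\mathbb{E}^*_{X_s}\bigl(e^{-\langle f,X_t\rangle}\bigr)=\exp\Bigl\{-\int_{\R^d}u_f^*(x,t)\,X_s(\dd x)\Bigr\},
\]
with $u_f^*$ the unique nonnegative solution of \eqref{eq:integralEquation} with $\psi^*$ in place of $\psi$.

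Taking expectations over $\mathcal{F}_s$, the remaining task is to evaluate $\mathbb{E}_\nu\exp\{-\langle u_f^*(\cdot,t),X_s\rangle\}$. Since $u_f^*(\cdot,t)\in bp(\R^d)$ (nonnegativity and boundedness of $u_f^*$ for fixed $t$, which is standard and can also be read off the estimates behind Fact~\ref{fac:misc1}), I apply \eqref{eq:logLaplace}–\eqref{eq:integralEq} once more, now for the original supercritical superprocess $X$ under $\mathbb{P}_\nu$ with test function $u_f^*(\cdot,t)$, to obtain
\[
\mathbb{E}_\nu\exp\{-\langle u_f^*(\cdot,t),X_s\rangle\}=\exp\Bigl\{-\int_{\R^d}v_f(x,s;t)\,\nu(\dd x)\Bigr\},
\]
where $v_f(\cdot,\cdot;t)$ solves $v_f(x,s;t)=\T s\bigl(u_f^*(\cdot,t)\bigr)(x)-\int_0^s\T{s-u}[\psi(v_f(\cdot,u;t))](x)\,\dd u$; rewriting $\psi(\lambda)=-\alpha\lambda+\beta\lambda^2$ in the $\T{}^\alpha$-notation exactly as in \eqref{eq:integralEquation} gives the displayed equation $v_f(x,t;s)=\T t(u_f^*(\cdot,s))(x)-\beta\int_0^t\T u^\alpha[v_f(\cdot,t-u;s)^2](x)\,\dd u$ (up to the cosmetic swap of the roles of the names $s,t$ in the corollary's final equation). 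Chaining the two displays yields the claimed formula.

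The main obstacle is the first step: making rigorous the assertion that, conditionally on $\mathcal{F}_s$, the dressing mass present at time $s$ evolves for $t>0$ exactly as a $\psi^*$-superprocess started from $X_s$. This requires care because $X_s$ itself decomposes into the dressing $D_0^s$ \emph{and} the backbone particles at time $s$ (cf.\ \eqref{eq:fundamentalDecomposition}), and one must check that the continuation of the Poissonian immigration along the backbone for times $r>s$, dressed on trajectories of $z$, reassembles — by the branching property and the Markov property of Theorem~\ref{thm:backboneConstruction} applied at time $s$ — into precisely the $\mathbb{P}^*_{X_s}$-superprocess. This is essentially the content of the Markov property in Theorem~\ref{thm:backboneConstruction} combined with the defining immigration mechanism of Definition~\ref{def:backboneConstruction}, so it is not deep, but it is the place where one must be precise rather than formal. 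Everything after that is a routine double application of the log-Laplace equation.
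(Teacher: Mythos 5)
Your proof is correct and matches the paper's (which gives no written-out argument beyond invoking the Markov property of Theorem \ref{thm:backboneConstruction}): conditioning at time $s$ followed by two applications of the log-Laplace equation is exactly the intended route. Two minor notes: once $\psi$ is absorbed into the $\T{}^{\alpha}$-semigroup the leading term of the integral equation for $v_f$ should read $\T t^{\alpha}\bigl(u_f^{*}(\cdot,s)\bigr)(x)$ rather than $\T t\bigl(u_f^{*}(\cdot,s)\bigr)(x)$ (the paper's display drops the superscript $\alpha$, and your rewriting step silently reproduces this typo); and the subtlety you flag at the end is genuine but worth stating precisely --- at $t=s$ the decomposition \eqref{eq:fundamentalDecomposition} gives $X_s=\tilde{X}_s+D_0^s$ (the backbone particles of $z$ carry no superprocess mass), so the pathwise $D_0^s$ defined by \eqref{eq:dressing} equals $X_s$ minus the ghost mass, not $X_s$ itself, an imprecision that the paper itself commits both here and in \eqref{eq:tmp17}, and which is harmless since $\tilde{X}$ and $D$ are both negligible in the proofs of the main theorems.
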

\begin{fact}
Let $f\in\pspace(\R^{d})$ then
\end{fact}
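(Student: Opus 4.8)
The statement is a collection of moment estimates for the dressing process $D_t^s$, in the spirit of \eqref{eq:mean}--\eqref{eq:estimationAAA} for the backbone: a formula for $\mathbb{E}_\nu\ddp f{D_t^s}$, a control of $\var_\nu\ddp f{D_t^s}$ (and, since moments up to order four are used in the paper, quite possibly of the third and fourth moments), exhibiting the decay which makes the term $D_{t-s}^s$ in \eqref{eq:fundamentalDecomposition} negligible against the normalisations $F_t$ once $t\gg s$. The plan is to obtain these by differentiating the Laplace transform of Corollary \ref{cor:dressing} in an auxiliary parameter --- exactly as \eqref{eq:diBruno} and \eqref{eq:generalEquation} were obtained --- and then to read the decay off Fact \ref{fac:decay1} and Fact \ref{fac:misc1}.

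First I would substitute $\theta f$ for $f$ in Corollary \ref{cor:dressing}, so that
\[
\mathbb{E}_\nu\rbr{e^{-\ddp{\theta f}{D_t^s}}}=\exp\cbr{-\ddp{v_{\theta f}(\cdot,s;t)}{\nu}},\qquad v_{\theta f}(x,s;t)=\T s^{\alpha}\sbr{u_{\theta f}^{*}(\cdot,t)}(x)-\beta\intc s\T u^{\alpha}\sbr{v_{\theta f}(\cdot,s-u;t)^{2}}(x)\,\dd u.
\]
As in the derivation of \eqref{eq:diBruno}, Fact \ref{fact:exponentailMomentsofTotalMass} --- applied both to $X_s$ under $\mathbb{P}_\nu$ and to the subcritical dressing --- gives an exponential moment for $\ddp f{D_t^s}$ on a neighbourhood of $\theta=0$, uniform for $s,t$ in compacts, which legalises differentiating under the integral signs. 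Writing $v_f^k:=\partial_\theta^k v_{\theta f}\big|_{\theta=0}$ and using $v_f^0=0$, $u_f^{*,0}=0$, one gets $v_f^1(x,s;t)=\T s^{\alpha}\sbr{u_f^{*,1}(\cdot,t)}(x)$ and, since the only nonlinearity is $y\mapsto y^2$, the clean recursion
\[
v_f^k(x,s;t)=\T s^{\alpha}\sbr{u_f^{*,k}(\cdot,t)}(x)-\beta\intc s\T u^{\alpha}\Bigl[\sum_{j=1}^{k-1}\binom{k}{j}\,v_f^{j}(\cdot,s-u;t)\,v_f^{k-j}(\cdot,s-u;t)\Bigr](x)\,\dd u,\qquad k\ge2.
\]
By the usual Laplace-transform bookkeeping $\mathbb{E}_\nu\ddp f{D_t^s}^k$ is a polynomial in $\ddp{v_f^j(\cdot,s;t)}{\nu}$, $j\le k$; in particular $\mathbb{E}_\nu\ddp f{D_t^s}=\ddp{v_f^1(\cdot,s;t)}{\nu}$ and $\var_\nu\ddp f{D_t^s}=-\ddp{v_f^2(\cdot,s;t)}{\nu}$, and the same formulas hold with $f$ replaced by $\tilde f$.

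The estimates then follow by repeating the computations behind Fact \ref{fac:misc1} and Fact \ref{fac:backBoneFacts}. Since $u_{\tilde f}^{*,1}(x,t)=\T t^{-\alpha}\tilde f(x)$ by \eqref{eq:superpocesMoment1}, one has $v_{\tilde f}^1(x,s;t)=e^{\alpha(s-t)}\T{s+t}\tilde f(x)$, and \eqref{eq:decayCentered} gives $|\mathbb{E}_\nu\ddp{\tilde f}{D_t^s}|\cleq e^{\alpha(s-t)}e^{-\mu(s+t)}\ddp{1+\norm\cdot{}n}{\nu}$. For the variance one inserts the bound \eqref{eq:estimaiton77} for $u_{\tilde f}^{*,2}$ and the expression for $v_{\tilde f}^1$ into the $k=2$ recursion and evaluates $\intc s e^{\alpha u}\T{s-u}\sbr{\cdots}(x)\,\dd u$ just as in the proof of \eqref{eq:estimationAAA}, absorbing the polynomial-in-$\norm x{}n$ factors through $\T t\sbr{\norm\cdot{}m}(x)\cleq\norm x{}m e^{-\mu t}+1$, which is the content of \eqref{eq:triavialEstimation}; this gives a bound of the shape $\var_\nu\ddp{\tilde f}{D_t^s}\cleq e^{\alpha(s-t)}\ddp{1+\norm\cdot{}n}{\nu}$, hence $\mathbb{E}_\nu\ddp{\tilde f}{D_t^s}^2\cleq e^{\alpha(s-t)}\ddp{1+\norm\cdot{}n}{\nu}$ as well, and the third and fourth moments come out the same way on feeding in \eqref{eq:tildeEstimate} for $u_{\tilde f}^{*,3}$ and $u_{\tilde f}^{*,4}$.

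The main obstacle, as with Fact \ref{fac:misc1} and Fact \ref{fac:backBoneFacts}, is not the arithmetic but the legalisation: the recursion above is first derived for $f\in bp(\R^d)$, where analyticity of $\theta\mapsto v_{\theta f}$ near $0$ follows from the exponential moment bound, and then must be upgraded to $f\in\pspace(\R^{d})$ together with the interchange of the $\theta$-derivatives with all the nested integral operators. I would handle this by truncating $f$ and dominating the tails with the polynomial-growth estimates of Fact \ref{fac:decay1}; here the subcritical sign of $\psi^*$ works in our favour, since the quadratic correction in the integral equation of Corollary \ref{cor:dressing} only pushes the relevant quantities in the favourable direction. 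The one genuinely error-prone bookkeeping point is keeping the two time parameters straight --- the outer, supercritical evolution over time $s$ (carrying $\T s^{\alpha}$) versus the subcritical dressing over time $t$ (carrying $\T t^{-\alpha}$) --- and applying $\T s^{\alpha}\T t^{-\alpha}=e^{\alpha(s-t)}\T{s+t}$ with the correct sign of the exponent throughout.
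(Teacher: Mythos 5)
Your approach matches the paper's exactly: the Fact is just the single first-moment identity \eqref{eq:tmp17}, and the paper obtains it precisely as you do, by differentiating the Laplace functional of Corollary \ref{cor:dressing} at $\theta=0$ to get $v_f^1(x,s;t)=\T s^{\alpha}\sbr{u_f^{*,1}(\cdot,t)}(x)=e^{\alpha(s-t)}\T{s+t}f(x)$ and integrating against $\nu$. The variance and higher-moment estimates for $D_t^s$ that you also sketch are not part of this Fact, so that portion of your proposal, while consistent with the paper's machinery, is extraneous here.
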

\begin{equation}
\mathbb{E}_{\nu}\langle f,D_{t}^{s}\rangle=e^{\alpha(s-t)}\int_{\R^{d}}\T{t+s}f(x)\nu(\dd{x)}.\label{eq:tmp17}
\end{equation}
The fact follows immediately by Corollary \ref{cor:dressing} and
Laplace transform calculations as in previous sections.

\subsection{Martingales}

We define two martingales $\cbr{W_{t}}_{t\geq0},\cbr{I_{t}}_{t\ge0}$,
associated with the backbone process $z$. Namely,
\[
W_{t}:=e^{-\alpha t}|z_{t}|,
\]
\[
I_{t}:=e^{-(\alpha-\mu)t}\sum_{i=1}^{|z_{t}|}z_{t}(i).
\]
They are closely related to $V$ and $H$ (defined by \eqref{eq:martingaleDef}
and \eqref{eq:secondMartingale}). Let us assume that all of them
are defined in terms of $\Lambda$ (see Theorem \eqref{thm:backboneConstruction}),
so that they {}``live'' in the same probability space.
\begin{fact}
\label{fac:martingaleEquvalence}$W$ is an $L^{2}$-bounded martingale.
We denote its limit by $W_{\infty}$ . Moreover,
\begin{equation}
V_{\infty}=\frac{\beta}{\alpha}W_{\infty}\quad\text{a.s}.\label{eq:martingaleEquiv1}
\end{equation}
$I$ is a martingale, which for $\alpha>2\mu$ it is $L^{2}$-bounded.
Then we denote its limit by $I_{\infty}$ and we have
\begin{equation}
H_{\infty}=\frac{\beta}{\alpha}I_{\infty}\quad\text{a.s}.\label{eq:martingaleEquiv2}
\end{equation}

\end{fact}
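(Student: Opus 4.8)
The plan is to prove the two halves of the statement by the same scheme: first treat $W$ and $I$ as standard branching‑process martingales, and then identify $W_{\infty}$ with $V_{\infty}$ and $I_{\infty}$ with $H_{\infty}$ through the backbone decomposition \eqref{eq:backboneConstruction}, by conditioning on the backbone $z$.

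\emph{The backbone martingales.} By construction $z$ is exactly the Ornstein--Uhlenbeck branching process of \citep{Adamczak:2011kx}: binary branching at rate $\psi'(\lambda^{*})=\alpha$ with OU motion between branchings. Hence $|z_{t}|$ is a Yule process of rate $\alpha$, so $\mathbf{E}_{\gamma}|z_{t}|=|\gamma|e^{\alpha t}$, $W$ is a martingale, and a one–line second–moment recursion (or a citation to \citep{Adamczak:2011kx}) gives $\sup_{t}\mathbf{E}W_{t}^{2}<\infty$, whence $W_{t}\conv W_{\infty}$ a.s.\ and in $L^{2}$. Likewise $\mathbf{E}_{\delta_{x}}\ddp{id}{z_{t}}=e^{\alpha t}\T t id(x)=e^{(\alpha-\mu)t}x$ by \eqref{eq:semigroupFirstEignenvalue}, so $I$ is a martingale, and for $\alpha>2\mu$ the same many–to–two second–moment computation that underlies Fact \ref{fac:Hmartingale}, carried out for $z$ (cf.\ \citep{Adamczak:2011kx}), shows $I$ is $L^{2}$–bounded; then $I_{t}\conv I_{\infty}$ a.s.\ and in $L^{2}$.

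\emph{Conditioning on the backbone.} Put $\mathcal{G}:=\sigma\rbr{z_{r}:r\geq0}$. By Definition \ref{def:backboneConstruction}, under $\mathbf{P}_{\nu}$ one has $X=\widetilde{X}+I^{\mathbb{N}^{*}}$, with $\widetilde{X}$ a subcritical ($\psi^{*}$) superprocess issued from $\nu$ and independent of $\mathcal{G}$, and $I^{\mathbb{N}^{*}}$ a Poissonian immigration, at rate $2\beta\,\dd r$, of independent $\psi^{*}$–clans thrown on according to $\mathbb{N}^{*}$ along the space–time trajectories of $z$. From \eqref{eq:dynkinKuznetzow} together with the moment formulas \eqref{eq:superpocesMoment1}--\eqref{eq:superprocesMoment2} for $\psi^{*}$ I would read off, for $f=1$ and $f=id$,
\[
\mathbb{N}_{x}^{*}\rbr{|X_{s}|}=e^{-\alpha s},\qquad \mathbb{N}_{x}^{*}\rbr{|X_{s}|^{2}}=\tfrac{2\beta}{\alpha}e^{-\alpha s}(1-e^{-\alpha s}),\qquad \mathbb{N}_{x}^{*}\rbr{\ddp{id}{X_{s}}}=e^{-(\alpha+\mu)s}x,
\]
and $\mathbb{N}_{x}^{*}\rbr{\norm{\ddp{id}{X_{s}}}{}{2}}\cleq(1+\norm{x}{}{2})e^{-\alpha s}$ from Fact \ref{fac:decay1}. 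For a Poisson immigration the $\mathcal{G}$–conditional mean (resp.\ variance) equals the integral along the trajectories of the $\mathbb{N}^{*}$–first (resp.\ second) moments, and at each time $r$ the number of contributing trajectories is exactly $|z_{r}|$; hence Campbell's formula, combined with $|z_{r}|=e^{\alpha r}W_{r}$, $\ddp{id}{z_{r}}=e^{(\alpha-\mu)r}I_{r}$ and the $\psi^{*}$–first/second moments of $|\widetilde{X}_{t}|$ and $\ddp{id}{\widetilde{X}_{t}}$, yields
\begin{align*}
\mathbf{E}(V_{t}\mid\mathcal{G}) &= e^{-2\alpha t}|\nu|+2\beta e^{-2\alpha t}\intc t e^{2\alpha r}W_{r}\,\dd r,\\
\var(V_{t}\mid\mathcal{G}) &\cleq e^{-3\alpha t}|\nu|+e^{-2\alpha t}\intc t e^{-\alpha(t-r)}|z_{r}|\,\dd r,\\
\mathbf{E}(H_{t}\mid\mathcal{G}) &= e^{-2\alpha t}\ddp{id}{\nu}+2\beta e^{-2\alpha t}\intc t e^{2\alpha r}I_{r}\,\dd r,\\
\var(H_{t}\mid\mathcal{G}) &\cleq e^{-2(\alpha-\mu)t}\Bigl(1+e^{-\alpha t}\intc t e^{\alpha r}\rbr{\ddp{\norm{\cdot}{}{2}}{z_{r}}+|z_{r}|}\,\dd r\Bigr).
\end{align*}

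\emph{Passing to the limit and the main obstacle.} Using the elementary fact that $2\alpha e^{-2\alpha t}\intc t e^{2\alpha r}Y_{r}\,\dd r\conv Y_{\infty}$ whenever $Y$ is locally bounded with $Y_{r}\conv Y_{\infty}$ (applied to $Y=W$ and $Y=I$), the conditional means converge a.s.\ to $\tfrac{\beta}{\alpha}W_{\infty}$, resp.\ $\tfrac{\beta}{\alpha}I_{\infty}$. Taking expectations in the variance bounds and using $\mathbf{E}|z_{r}|\cleq e^{\alpha r}$ together with $\mathbf{E}\ddp{\norm{\cdot}{}{2}}{z_{r}}\cleq e^{\alpha r}$ (the latter since $\T r\norm{\cdot}{}{2}\cleq 1+\norm{\cdot}{}{2}$ and $\nu$ is compactly supported) gives $\mathbf{E}\,\var(V_{t}\mid\mathcal{G})\cleq e^{-\alpha t}\conv0$ and $\mathbf{E}\,\var(H_{t}\mid\mathcal{G})\cleq e^{-(\alpha-2\mu)t}\conv0$. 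Therefore $V_{t}-\mathbf{E}(V_{t}\mid\mathcal{G})\conv0$ and $H_{t}-\mathbf{E}(H_{t}\mid\mathcal{G})\conv0$ in $L^{2}$; combining with $V_{t}\conv V_{\infty}$ (Fact \ref{fac:totalMassMartingale}) and $H_{t}\conv H_{\infty}$ (Fact \ref{fac:Hmartingale}) in $L^{2}$, the conditional means converge in $L^{2}$ to $V_{\infty}$, resp.\ $H_{\infty}$, and matching these $L^{2}$–limits with the a.s.\ ones forces $V_{\infty}=\tfrac{\beta}{\alpha}W_{\infty}$ and $H_{\infty}=\tfrac{\beta}{\alpha}I_{\infty}$ a.s. I expect the only genuinely delicate point to be the second–moment bookkeeping for $H$: one must check that the factor $e^{-2(\alpha-\mu)t}$ gained from the normalisation beats the $e^{2\alpha t}$ growth produced by $\ddp{\norm{\cdot}{}{2}}{z_{r}}$, which is precisely where the hypothesis $\alpha>2\mu$ enters, mirroring its role in Fact \ref{fac:Hmartingale}. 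The remaining points — the rigour of the $\mathbb{N}^{*}$–moment identities and of the Campbell / Poisson–immigration mean and variance formulas — are routine given \eqref{eq:dynkinKuznetzow}, the moment computations of Section \ref{sec:Proof-Preliminaires} specialised to $f\in\cbr{1,id}$, and standard Poisson–random–measure theory.
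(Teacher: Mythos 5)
Your proposal is correct, and the route is genuinely different from the paper's. The paper proves \eqref{eq:martingaleEquiv2} by invoking the two-timescale decomposition \eqref{eq:fundamentalDecomposition}: it writes $H_{nt}-\frac{\beta}{\alpha}I_{t}$ as a sum of four error terms (the initial mass $X^{0}$, the dressing $D^{t}$, the fluctuation $\sum_{i}(Z^{i,t}-z^{i,t})$, and the bias $\sum_{i}(z^{i,t}-z_{t}(i))$) and shows each vanishes in probability as $t\to\infty$, letting a.s.\ convergence of $H$ and $I$ separately do the rest. You instead condition on the whole backbone $\sigma$-algebra $\mathcal{G}$ and exploit the Poisson structure of the immigration: Campbell's formula turns the $\mathcal{G}$-conditional first and second moments of $|\Lambda_{t}|$ and $\ddp{id}{\Lambda_{t}}$ into integrals of $\mathbb{N}^{*}$-moments along the backbone, giving exactly $\mathbf{E}(V_{t}\mid\mathcal{G})=e^{-2\alpha t}|\nu|+2\beta e^{-2\alpha t}\int_{0}^{t}e^{2\alpha r}W_{r}\,\dd r$ and the analogous $I$-expression, plus a conditional variance that is $O(e^{-\alpha t})$ resp.\ $O(e^{-(\alpha-2\mu)t})$ after taking expectations. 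The identification is then a clean matching of an a.s.\ limit of conditional means against the $L^{2}$ limit of $V_{t}$, resp.\ $H_{t}$. I checked the computations: the $\mathbb{N}^{*}$-moments are exactly $-u_{f}^{*,1}$, $-u_{f}^{*,2}$ evaluated at $f=1,id$ and match your formulas, the conditional-mean Duhamel integrals are correct, and the variance bookkeeping does require $\alpha>2\mu$ precisely as you say. Compared with the paper's argument, yours is more conceptual (the factor $\frac{\beta}{\alpha}=\lim 2\beta e^{-2\alpha t}\int_{0}^{t}e^{2\alpha r}\dd r$ appears transparently as an exponential smoothing of the backbone martingale), avoids the auxiliary parameter $n$, and treats the $W/V$ and $I/H$ identifications uniformly; the paper's two-timescale argument, while less transparent here, is the template reused throughout the CLT proofs and therefore costs nothing extra in context. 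The only point you should be careful to state is that the clans immigrating before time $t$ depend on $z$ only through its restriction to $[0,t]$, so conditioning on all of $\mathcal{G}$ is harmless; with that caveat the argument is complete.
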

The proof uses some facts which are presented later on, hence it is
postponed to Appendix. 

We are now able do describe the relation of $V_{\infty},H_{\infty}$
and the dependence of the latter on the starting conditions. Let us
first denote by $\cbr{J_{i}}_{i\geq1}$ an i.i.d. sequence of random
variables distributed as $I_{\infty}$ but with assumption that $z_{0}=\delta_{0}$.
Under assumption $\alpha>2\mu$ this exists, we refer to \citep[Fact 3.8]{Adamczak:2011kx}
where it is know under name $H_{\infty}$ and to \citep[Remark 3.12]{Adamczak:2011kx}
for some information about its law. 
\begin{thm}
\label{fact:Hdependence}Let $\cbr{X_{t}}_{t\geq0}$ be the OU-superprocess
starting from $\nu\in\fMeas$ and $\alpha>2\mu$. Let us define 
\[
\hat{H}_{\infty}:=\frac{\beta}{\alpha}\rbr{\sum_{i=1}^{N}J_{i}+\sum_{i=1}^{N}x_{i}E_{i}},\quad\hat{V}_{\infty}:=\frac{\beta}{\alpha}\rbr{\sum_{i=1}^{N}x_{i}E_{i}},
\]
where $(x_{1},x_{2},\ldots,x_{N})$ is a realization of a Poisson
point process with intensity $\nu$ and $\cbr{E_{i}}_{i\geq1}$ is
an i.i.d. sequence of exponential random variables with parameter
$1$. Moreover, we assume that all defining objects are independent.
Then 
\[
(H_{\infty},V_{\infty})=^{d}(\hat{H}_{\infty},\hat{V}_{\infty}).
\]

\end{thm}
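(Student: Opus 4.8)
The plan is to exploit the backbone representation of Definition~\ref{def:backboneConstruction} together with the martingale equivalences in Fact~\ref{fac:martingaleEquvalence}, reducing the statement about $(H_\infty,V_\infty)$ for the superprocess to a purely ``discrete'' statement about the backbone branching process $z$ and its dressing. Recall that under $\mathbf{P}_\nu$ the initial configuration $\gamma$ of $z$ is a Poisson point process with intensity $\lambda^{*}\nu=\tfrac{\alpha}{\beta}\nu$; write its atoms as $y_1,\dots,y_M$ with $M$ Poisson of mean $\tfrac{\alpha}{\beta}|\nu|$. By the branching property, conditionally on $\gamma$ the subtrees $\mathcal{T}^{i,0}$ stemming from the distinct backbone roots evolve independently, and the dressing attached to each is independent across roots. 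Hence both $H_\infty=\tfrac{\beta}{\alpha}I_\infty$ and $V_\infty=\tfrac{\beta}{\alpha}W_\infty$ decompose as independent sums over $i=1,\dots,M$ of the corresponding limits for a single backbone root started at $y_i$.

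First I would make this decomposition precise: set $W_\infty=\sum_{i=1}^M W_\infty^{(i)}$ and $I_\infty=\sum_{i=1}^M I_\infty^{(i)}$, where $(W_\infty^{(i)},I_\infty^{(i)})$ are i.i.d.\ copies (given $M$ and the positions) of the pair of martingale limits for $\Lambda$ started from a single backbone particle at $y_i$ together with its dressing --- here one uses that the limits $W_\infty,I_\infty$ do not see the subcritical pieces $X^0,D$ in \eqref{eq:fundamentalDecomposition} since those are subcritical and contribute $0$ to the $e^{-\alpha t}$-- and $e^{-(\alpha-\mu)t}$--scaled limits. Next I would identify, for a single root at a point $x$, the law of $(W_\infty^{(x)}, I_\infty^{(x)})$. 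For the total mass, $W_\infty^{(x)}$ is a single exponential: combining Fact~\ref{fac:totalMassMartingale} (which gives $V_\infty=^d\sum_{i=1}^N E_i$ with $N$ Poisson and $E_i$ exponential of parameter $\alpha/\beta$) with $V_\infty=\tfrac{\beta}{\alpha}W_\infty$, one reads off that the contribution of each backbone root is an $\mathrm{Exp}(1)$ variable, and it is position-independent because the total mass is insensitive to spatial location. This gives the $\hat V_\infty$ term; note the Poisson structure $(x_1,\dots,x_N)$ of intensity $\nu$ in the statement is exactly $\gamma/\lambda^{*}$-rescaled, matching the $\tfrac{\beta}{\alpha}$ prefactor.

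The substantive point is the spatial martingale $I_\infty^{(x)}$ for a backbone rooted at $x$. I would decompose the OU trajectory of the root particle during its lifetime and its children's subtrees, using the strong Markov / branching property of $z$ at the first branch time, to show that $I_\infty^{(x)}$ splits additively into a ``centered'' part which does not depend on $x$ --- identified with $J\sim I_\infty$ for a root at $0$, as defined before the statement --- plus a ``drift'' part coming from the deterministic contraction $\T t\,\mathrm{id}(x)=e^{-\mu t}x$ of the OU semigroup \eqref{eq:semigroupFirstEignenvalue}. Concretely, since each backbone particle's expected displacement contracts like $e^{-\mu t}$ while the population grows like $e^{\alpha t}$, the leading $x$-dependence of $e^{-(\alpha-\mu)t}\langle \mathrm{id},z_t\rangle$ is $x$ times $e^{-\alpha t}|z_t|\to x W_\infty^{(x)}$; passing to the limit yields $I_\infty^{(x)} = J + x\,W_\infty^{(x)}$ with $J$ independent of $W_\infty^{(x)}$ and of $x$. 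Summing over the $M=N$ backbone roots at positions $x_i$ and multiplying by $\tfrac{\beta}{\alpha}$ gives exactly $\hat H_\infty=\tfrac{\beta}{\alpha}\bigl(\sum_i J_i + \sum_i x_i E_i\bigr)$ jointly with $\hat V_\infty=\tfrac{\beta}{\alpha}\sum_i x_i E_i$, and the independence assertions follow from the independence across backbone roots and the independence of the ``centered'' and ``scale'' parts within each root. The main obstacle I anticipate is rigorously justifying the limit decomposition $I_\infty^{(x)}=J+xW_\infty^{(x)}$ --- i.e.\ controlling the $L^2$ error between $e^{-(\alpha-\mu)t}\langle\mathrm{id},z_t\rangle$ and $J_t + x W_t$ uniformly in $t$, which requires the $L^2$-boundedness of $I$ from Fact~\ref{fac:Hmartingale}/\ref{fac:martingaleEquvalence} under $\alpha>2\mu$ and a careful use of the semigroup estimates in Fact~\ref{fac:decay1}; the corresponding computation for the branching particle system is \citep[Fact 3.8, Remark 3.12]{Adamczak:2011kx}, and I would adapt it, checking that the dressing contributes nothing to these limits since it is subcritical.
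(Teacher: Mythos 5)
Your overall route matches what the paper indicates: reduce to the backbone via Fact~\ref{fac:martingaleEquvalence}, exploit the Poisson structure of the backbone initial configuration, kill the subcritical pieces of \eqref{eq:fundamentalDecomposition}, and decompose the single-root spatial limit by linearity of the OU drift as $I_\infty^{(x)}=I_\infty^{0}+xW_\infty$, with $I_\infty^{0}$ (centered spatial limit, root at $0$) and $W_\infty$ (mass limit) built from the \emph{same} backbone subtree. Those reductions are correct and are essentially the ones the paper defers to.

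The gap is the sentence ``passing to the limit yields $I_\infty^{(x)}=J+xW_\infty^{(x)}$ with $J$ independent of $W_\infty^{(x)}$.'' The $L^2$ limiting argument gives the \emph{additive} decomposition, but says nothing about independence, and that independence is exactly where the content of the theorem lies; your only justification is the later phrase ``the independence of the `centered' and `scale' parts within each root,'' which is an assertion, not an argument. In fact, one can check that the assertion fails. Expand $I_\infty^{0}$ and $W_\infty$ at the first backbone split time $\tau\sim\mathrm{Exp}(\alpha)$, with parent position $Y_\tau\sim\mathcal N\bigl(0,\sigma^2(1-e^{-2\mu\tau})/(2\mu)\bigr)$: $W_\infty=e^{-\alpha\tau}(W_1+W_2)$, $I_\infty^{0}=e^{-(\alpha-\mu)\tau}[Y_\tau(W_1+W_2)+I_1+I_2]$, with $(W_i,I_i)$ i.i.d.\ copies of $(W_\infty,I_\infty^{0})$. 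Iterating the resulting moment recursions gives $\mathbb E[(I_\infty^{0})^{2}]=2\sigma^2/(\alpha-2\mu)$, $\mathbb E[W_\infty]=1$, but $\mathbb E[(I_\infty^{0})^{2}W_\infty]=\sigma^2(5\alpha-6\mu)/\{(\alpha-\mu)(\alpha-2\mu)\}$, so that $\mathbb E[(I_\infty^{0})^{2}W_\infty]-\mathbb E[(I_\infty^{0})^{2}]\,\mathbb E[W_\infty]=\sigma^2(3\alpha-4\mu)/\{(\alpha-\mu)(\alpha-2\mu)\}>0$ for every $\alpha>2\mu$. Hence $I_\infty^{0}$ and $W_\infty$ are \emph{not} independent within a root, and taking $J_i\perp E_i$ as the statement prescribes does not reproduce the joint law of $(H_\infty,V_\infty)$. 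Either the per-root pair must carry the true (dependent) joint law of $(I_\infty^{0},W_\infty)$ --- in which case the claim is essentially immediate once the decomposition over roots is established --- or a genuinely different argument is required; your sketch supplies neither. (You also silently inherit two apparent slips in the statement: $\hat V_\infty$ as written carries the factor $x_i\in\R^d$, making it vector-valued while $V_\infty$ is scalar, and the Poisson intensity should be $(\alpha/\beta)\nu$ to match the randomized law $\mathbf P_\nu$; neither affects the independence problem, but both should be flagged rather than reproduced.)
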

The proof follows easily using techniques of Fact \ref{fac:martingaleEquvalence}
and \citep[Fact 3.8]{Adamczak:2011kx}, hence is skipped.

\section{Proof of Theorem \ref{thm:smallBranching}\label{sec:smallBranching}}

We first gather properties of $V_{f}^{k}$, extending the list in
Fact \ref{fac:backBoneFacts}, in the case when $\alpha<2\mu$. We
recall that $\tilde{f}(x)=f(x)-\ddp f{\eq}$. 
\begin{fact}
\label{fac:slowBranchingMoments}Let $\alpha<2\mu$ and $f\in\pspace(\R^{d})$.
Then for any $x\in\R^{d}$ there is 
\begin{equation}
e^{-(\alpha/2)t}V_{\tilde{f}}^{1}(x,t)\conv0,\quad\text{as }t\conv+\infty,\label{eq:smallBranchingMoment1}
\end{equation}
\textup{
\begin{equation}
e^{-\alpha t}V_{\tilde{f}}^{2}(x,t)\conv\sigma_{f}^{2},\quad\text{as }t\conv+\infty,\label{eq:smallBranchingMoment2}
\end{equation}
as $t\conv+\infty$, where $\sigma_{f}^{2}$ is given by \eqref{eq:sigmaDefinition}.
We have $\sigma_{f}^{2}<+\infty$. Moreover, there exists $l>1/2$
such that 
\begin{equation}
\sup_{\norm x{}{}\leq t^{l}}|e^{-\alpha t}V_{\tilde{f}}^{2}(x,t)-\sigma_{f}^{2}|\conv0,\quad\text{as }t\conv+\infty.\label{eq:uniformConvergence}
\end{equation}
Finally, }there exist $C,n>0$ such that 
\begin{equation}
|V_{\tilde{f}}^{4}(x,t)|\leq Ce^{2\alpha t}(\norm x{}n+1).\label{eq:fourthMomentEstimate}
\end{equation}
\end{fact}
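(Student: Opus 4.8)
The plan is to dispatch the four assertions \eqref{eq:smallBranchingMoment1}--\eqref{eq:fourthMomentEstimate} one at a time, starting from the closed forms \eqref{eq:mean} and \eqref{eq:secondMoment} for $V^{1},V^{2}$ and the recursion \eqref{eq:generalEquation} for $V^{3},V^{4}$, and feeding in the semigroup decay of Fact \ref{fac:decay1} and the bounds of Fact \ref{fac:misc1}. Throughout, the number $\delta:=\min(2\mu-\alpha,2\alpha)$ is positive precisely because $\alpha<2\mu$, and it is what makes the time integrals below converge. Display \eqref{eq:smallBranchingMoment1} is the easy one: from \eqref{eq:mean}, $|e^{-(\alpha/2)t}V_{\tilde{f}}^{1}(x,t)|\cleq e^{-(\alpha/2)t}\sinh(\alpha t)\,|\T t\tilde{f}(x)|\cleq(1+\norm x{}n)e^{(\alpha/2-\mu)t}$ by \eqref{eq:decayCentered}, which tends to $0$ since $\alpha/2<\mu$.

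For \eqref{eq:smallBranchingMoment2} I would first rewrite \eqref{eq:secondMoment}: using $\T{t-s}^{\alpha}=e^{\alpha(t-s)}\T{t-s}$, $u_{\tilde{f}}^{*,1}(\cdot,s)=\T s^{-\alpha}\tilde{f}$ and $u_{\tilde{f}}^{*,2}(\cdot,s)=-2\beta u(\cdot,s)$, with $u$ the function from \eqref{eq:sigmaDefinition}, one gets
\[
e^{-\alpha t}V_{\tilde{f}}^{2}(x,t)=\frac{\beta}{\alpha}\intc t e^{-\alpha s}\T{t-s}\sbr{g_{s}}(x)\dd s,\quad g_{s}:=2\beta\rbr{\T s^{\alpha}\tilde{f}}^{2}-2\beta\rbr{\T s^{-\alpha}\tilde{f}}^{2}+4\alpha\beta u(\cdot,s),
\]
whose only candidate limit, by \eqref{eq:sigmaDefinition}, is $\sigma_{f}^{2}=\frac{\beta}{\alpha}\inti e^{-\alpha s}\ddp{g_{s}}{\eq}\dd s$. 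The heart of the matter is the estimate $|e^{-\alpha s}g_{s}(y)|\cleq(1+\norm y{}N)e^{-\delta s}$: the first summand equals $2\beta e^{\alpha s}(\T s\tilde{f}(y))^{2}\cleq e^{(\alpha-2\mu)s}(1+\norm y{}{2n})$ by \eqref{eq:decayCentered} (this is where $\alpha<2\mu$ enters), while the second and the $u$-term decay at least as fast --- for $u$ one uses $|u(y,s)|\cleq e^{-\alpha s}(1+\norm y{}{2n})$, obtained exactly as in the proof of \eqref{eq:estimaiton77} from \eqref{eq:decayCentered} and $\T r(1+\norm\cdot{}m)(x)\cleq 1+\norm x{}m$. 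Given this, $\sigma_{f}^{2}$ is finite (because $|e^{-\alpha s}\ddp{g_{s}}{\eq}|\cleq e^{-\delta s}$), the integrand in the display converges pointwise to $e^{-\alpha s}\ddp{g_{s}}{\eq}$ by \eqref{eq:decayCentered} applied to $g_{s}\in\pspace$, and $|e^{-\alpha s}\T{t-s}\sbr{g_{s}}(x)|\cleq e^{-\delta s}(1+\norm x{}N)$ uniformly in $t\geq s$, so dominated convergence gives \eqref{eq:smallBranchingMoment2}. As a by-product one also obtains the sharpened bound $|V_{\tilde{f}}^{2}(x,t)|\cleq e^{\alpha t}(1+\norm x{}N)$, far better than \eqref{eq:estimationAAA}, which I will reuse below.

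The uniform version \eqref{eq:uniformConvergence} is where the real work lies. Writing $\tilde{g}_{s}:=g_{s}-\ddp{g_{s}}{\eq}$,
\[
e^{-\alpha t}V_{\tilde{f}}^{2}(x,t)-\sigma_{f}^{2}=\frac{\beta}{\alpha}\intc t e^{-\alpha s}\T{t-s}\sbr{\tilde{g}_{s}}(x)\dd s-\frac{\beta}{\alpha}\int_{t}^{\infty}e^{-\alpha s}\ddp{g_{s}}{\eq}\dd s,
\]
and the second term is $\cleq e^{-\delta t}$. For the first, I need an \emph{$s$-uniform} form of the gap estimate \eqref{eq:decayCentered}: since $\cbr{e^{-\alpha s}g_{s}}_{s\geq0}$ is bounded by $Ce^{-\delta s}$ in the weighted norm $h\mapsto\sup_{y}|h(y)|/(1+\norm y{}N)$, revisiting the proof of Fact \ref{fac:decay1} (or the representation $\T r h(x)=\ev{}h(xe^{-\mu r}+\sqrt{1-e^{-2\mu r}}\,G)$, $G\sim\eq$) should yield $|\T r\tilde{g}_{s}(x)|\cleq e^{(\alpha-\delta)s}(1+\norm x{}N)e^{-\mu r}$ with a constant independent of $s$. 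Then the first term is $\cleq(1+\norm x{}N)e^{-\mu t}\intc t e^{(\mu-\delta)s}\dd s\cleq(1+\norm x{}N)(1+t)e^{-(\mu\wedge\delta)t}$, which on $\norm x{}{}\leq t^{l}$ tends to $0$ for every $l>0$, in particular for some $l>1/2$. Establishing that $s$-uniform semigroup bound is the one genuinely delicate point; everything else here is routine.

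For \eqref{eq:fourthMomentEstimate} I would exploit that $\psi^{*}$ is quadratic, so $\psi^{*(m)}\equiv0$ for $m\geq3$ and only the $m_{1}+\cdots+m_{k}=2$ terms survive in \eqref{eq:generalEquation}, besides the explicit $-2\alpha u_{\tilde{f}}^{*,k}$. Inspecting $B_{3},B_{4}$ and using $-\frac{\alpha}{\beta}V_{\tilde{f}}^{1}+u_{\tilde{f}}^{*,1}=\T s^{\alpha}\tilde{f}$ throughout, one finds that $V_{\tilde{f}}^{3}$ is assembled from $(\T s^{\alpha}\tilde{f})(-\frac{\alpha}{\beta}V_{\tilde{f}}^{2}+u_{\tilde{f}}^{*,2})$, $u_{\tilde{f}}^{*,1}u_{\tilde{f}}^{*,2}$, $u_{\tilde{f}}^{*,3}$, and $V_{\tilde{f}}^{4}$ from $(-\frac{\alpha}{\beta}V_{\tilde{f}}^{2}+u_{\tilde{f}}^{*,2})^{2}$, $(\T s^{\alpha}\tilde{f})(-\frac{\alpha}{\beta}V_{\tilde{f}}^{3}+u_{\tilde{f}}^{*,3})$ and lower-order products of the $u_{\tilde{f}}^{*,j}$. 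Inserting $|\T s^{\alpha}\tilde{f}(x)|\cleq(1+\norm x{}n)e^{(\alpha-\mu)s}$ (from \eqref{eq:decayCentered}), the sharpened $|V_{\tilde{f}}^{2}(x,s)|\cleq e^{\alpha s}(1+\norm x{}N)$ obtained above, and $|u_{\tilde{f}}^{*,j}(x,s)|\cleq e^{-\alpha s}(1+\norm x{}n)$ for $j\leq4$ (Fact \ref{fac:misc1}), one first gets $|V_{\tilde{f}}^{3}(x,s)|\cleq(1+s)e^{\max(\alpha,\,2\alpha-\mu)s}(1+\norm x{}N)$; then the dominant piece of $V_{\tilde{f}}^{4}(x,t)$, the one coming from $(-\frac{\alpha}{\beta}V_{\tilde{f}}^{2}+u_{\tilde{f}}^{*,2})^{2}$, is $\cleq e^{\alpha t}\intc t e^{-\alpha s}e^{2\alpha s}(1+\norm x{}N)\dd s\cleq e^{2\alpha t}(1+\norm x{}N)$, while every remaining piece carries a strictly smaller exponent --- here $\alpha<2\mu$ is used once more, through $3\alpha-2\mu<2\alpha$ --- which yields \eqref{eq:fourthMomentEstimate}. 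The only labour in this last part is the bookkeeping of exponents.
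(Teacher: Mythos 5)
Your arguments for \eqref{eq:smallBranchingMoment1}, \eqref{eq:smallBranchingMoment2}, the finiteness of $\sigma_f^2$, and the fourth-moment bound \eqref{eq:fourthMomentEstimate} match the paper's in every essential: the same rewriting of \eqref{eq:secondMoment}, the same dominated-convergence argument via the envelope $e^{(\alpha-2\mu)s}(1+\|y\|^{N})$, and the same Fa\`a di Bruno bookkeeping for $k=3,4$ with quadratic $\psi^{*}$ (your intermediate exponent $\max(\alpha,2\alpha-\mu)$ for $V_{\tilde{f}}^{3}$ is a bit sharper than the paper's crude $3\alpha/2$, but both feed into the same $e^{2\alpha t}$ bound for $V_{\tilde{f}}^{4}$).

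The substantive difference, and the one genuine gap, is in \eqref{eq:uniformConvergence}. You compare $e^{-\alpha t}V_{\tilde{f}}^{2}(x,t)$ directly with $\sigma_f^2$, which forces you to control $\mathcal{P}_{t-s}\tilde{g}_s(x)$ uniformly over the one-parameter family $\{g_s\}_{s\ge0}$; this requires an $s$-uniform spectral-gap estimate --- a constant in \eqref{eq:decayCentered} controlled by the weighted sup norm of the argument alone --- which you assert (``should yield'') but do not prove, and which is strictly more than Fact \ref{fac:decay1} as stated gives. The claim is in fact true: writing $\mathcal{P}_{r}\tilde{h}(x)=\int h(z)\bigl[K_r(z;x)-\varphi(z)\bigr]\,\mathrm{d}z$ and Taylor-expanding the Gaussian kernel $K_r(\cdot;x)$ in its mean $xe^{-\mu r}$ and variance does deliver a constant depending only on the weighted norm of $h$. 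But that computation is precisely the technical core here and cannot be waved away; moreover your single estimate $(1+\|x\|^{N})e^{-\mu t}\int_0^t e^{(\mu-\delta)s}\,\mathrm{d}s$ silently assumes the factor $e^{-\mu(t-s)}$ is available for \emph{all} $s\le t$, whereas the kernel expansion needs $\|x\|e^{-\mu(t-s)}$ small, so the band $s\in[t/2,t]$ has to be split off and killed crudely (using $\|\mathcal{P}_r\|\cleq 1$ on the weighted space and the $e^{-\delta s}$ decay of $\|g_s\|$). The paper avoids both issues by a different decomposition: it compares $V_{\tilde{f}}^{2}(x,t)$ with $V_{\tilde{f}}^{2}(0,t)$ rather than with $\sigma_f^2$, splits the $s$-integral at $t/2$, and on $[0,t/2]$ needs only the Lipschitz modulus of the fixed Gaussian density $g_{t-s}$ with respect to the shift $xe^{-\mu(t-s)}$ --- a one-line bound that requires no centering of the integrand and no uniformity in $s$. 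Your decomposition is cleaner to state, but it rests on a lemma you have not supplied; the paper's is more economical because it sidesteps that lemma entirely. If you carry out the kernel-perturbation estimate and add the $[t/2,t]$ split, your argument closes.
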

\begin{proof}
The proof of \eqref{eq:smallBranchingMoment1} follows easily by \eqref{eq:mean}
and Fact \ref{fac:decay1} hence is skipped. To get \eqref{eq:martingaleEquiv2}
we use \eqref{eq:secondMoment}and write 
\begin{multline}
e^{-\alpha t}V_{\tilde{f}}^{2}(x,t)=\frac{\beta}{\alpha}\int_{0}^{t}e^{-\alpha s}\T{t-s}\sbr{2\beta\rbr{\T s^{\alpha}\tilde{f}(\cdot)}^{2}}(x)\dd s\\
-2\frac{\beta}{\alpha}\int_{0}^{t}e^{-\alpha s}\T{t-s}\sbr{\beta\rbr{u_{\tilde{f}}^{*,1}(\cdot,s)}^{2}+\alpha u_{\tilde{f}}^{*,2}(\cdot,s)}(x)\dd s=:I_{1}(t)+I_{2}(t).\label{eq:tmp19-1}
\end{multline}
We have 
\[
I_{1}(t)=\frac{\beta}{\alpha}\int_{0}^{t}\T{t-s}\sbr{2\beta\rbr{e^{(\alpha/2)s}\T s\tilde{f}(\cdot)}^{2}}(x)\dd s.
\]
By \eqref{eq:decayCentered} the integrand in the last expression
can be estimated as follows
\begin{equation}
\T{t-s}\sbr{\rbr{e^{(\alpha/2)s}\T s\tilde{f}(\cdot)}^{2}}(x)\cleq e^{(\alpha-2\mu)s}\T{t-s}\sbr{(1+\norm{\cdot}{}n)^{2}}(x).\label{eq:tmpEst7}
\end{equation}
Using \eqref{eq:triavialEstimation} it can be checked that for any
$t\geq0$ we have $\T t\sbr{(1+\norm{\cdot}{}n)^{2}}(x)\cleq(1+\norm x{}{2n})$.
The dominated Lebesgue theorem implies that
\[
I_{1}(t)\conv\frac{\beta}{\alpha}\int_{0}^{\infty}\ddp{\eq}{2\beta\rbr{e^{(\alpha/2)s}\T s\tilde{f}(\cdot)}^{2}}\dd s<+\infty.
\]
A completely analogous argument, using \eqref{eq:superpocesMoment1}
and \eqref{eq:estimaiton77}, can be applied to treat $I_{2}(t).$

We will prove \eqref{eq:uniformConvergence} with $l=1$ (in the formulation
of the fact we used general $l$ in order to keep it consistent with
forthcoming Fact \ref{fac:criticalBranchingMoments}). It is enough
to show 
\[
\sup_{\norm x{}{}\leq t}e^{-\alpha t}|V_{\tilde{f}}^{2}(x,t)-V_{\tilde{f}}^{2}(0,t)|\conv0.
\]
We will prove this for the first term of \eqref{eq:tmp19-1} which
is the hardest one and leave other terms to the reader. That is, we
will show that 
\[
\sup_{\norm x{}{}\leq t}\left|\int_{0}^{t}e^{-\alpha s}\T{t-s}\sbr{2\beta\rbr{\T s^{\alpha}\tilde{f}(\cdot)}^{2}}(x)\dd s-\int_{0}^{t}e^{-\alpha s}\T{t-s}\sbr{2\beta\rbr{\T s^{\alpha}\tilde{f}(\cdot)}^{2}}(0)\dd s\right|\conv0.
\]
By Fact \ref{fac:decay1} we have
\begin{multline*}
\sup_{\norm x{}{}\leq t}\left|\int_{t/2}^{t}e^{-\alpha s}\T{t-s}\sbr{2\beta\rbr{\T s^{\alpha}\tilde{f}(\cdot)}^{2}}(x)\dd s\right|\cleq\sup_{\norm x{}{}\leq t}\left|\int_{t/2}^{t}e^{(\alpha-2\mu)s}\T{t-s}\sbr{\rbr{(1+\norm x{}n)}^{2}}(x)\dd s\right|\\
\cleq t^{2n}e^{((\alpha-2\mu)/2)t}\conv0.
\end{multline*}
It is well-known that $\T t\tilde{f}(x)=\int_{\R^{d}}g_{t}(xe^{-\mu t}+y)\tilde{f}(y)\dd y$,
where $g_{t}$ is the density of $\mathcal{N}(0,(1-e^{-2\mu t}))$
(see e.g. \citep[(19)]{Adamczak:2011kx}). Therefore using Fact \ref{fac:decay1}
and properties of the Gaussian density we obtain 
\begin{multline}
\sup_{\norm x{}{}\leq t}\left|\int_{0}^{t/2}e^{-\alpha s}\T{t-s}\sbr{2\beta\rbr{\T s^{\alpha}\tilde{f}(\cdot)}^{2}}(x)\dd s-\int_{0}^{t/2}e^{-\alpha s}\T{t-s}\sbr{2\beta\rbr{\T s^{\alpha}\tilde{f}(\cdot)}^{2}}(0)\dd s\right|\\
=2\beta\sup_{\norm x{}{}\leq t}\left|\int_{0}^{t/2}\int_{\R^{d}}e^{-\alpha s}\rbr{g_{t-s}(xe^{-\mu(t-s)}+y)-g_{t-s}(y)}\rbr{\T s^{\alpha}\tilde{f}(y)}^{2}\dd y\dd s\right|\\
\cleq\sup_{\norm x{}{}\leq t}\int_{0}^{t/2}\int_{\R^{d}}e^{-\alpha s}e^{-\mu(t-s)}\norm x{}{}(\norm y{}{}+t)g_{t-s}(y)\rbr{e^{(\alpha-\mu)s}(1+\norm y{}n)}^{2}\dd y\dd s\\
\cleq t^{2}\int_{0}^{t/2}e^{-\alpha s}e^{-\mu(t-s)}e^{2(\alpha-\mu)s}\dd s\leq t^{2}e^{-(\mu/2)t}\int_{0}^{t/2}e^{(\alpha-2\mu)s}\dd s\cleq t^{2}e^{-(\mu/2)t}\conv0.\label{eq:kurnik}
\end{multline}
In order to prove \eqref{eq:fourthMomentEstimate} we apply the triangle
inequality to \eqref{eq:generalEquation}
\begin{multline}
|V_{\tilde{f}}^{k}(x,t)|\cleq\sum_{\mathbf{m}\in B_{k}}\int_{0}^{t}\T{t-s}^{\alpha}\sbr{\prod_{j=1}^{k}\left|V_{\tilde{f}}^{j}+u_{\tilde{f}}^{*,j}\right|^{m_{j}}}(x)\dd s\\
+\int_{0}^{t}\T{t-s}^{\alpha}\sbr{u_{\tilde{f}}^{*,k}}(x)\dd s+\sum_{\mathbf{m}\in B_{k}}\int_{0}^{t}\T{t-s}^{\alpha}\sbr{\prod_{j=1}^{k}\left|u_{\tilde{f}}^{*,j}\right|^{m_{j}}}(x)\dd s.\\
\label{eq:generalEquation-1}
\end{multline}
By \eqref{eq:tildeEstimate} we can see that terms containing $u_{\tilde{f}}^{*,k}$
will not contribute (in fact they will be of order $(1+\norm x{}n)$
for some $n\in\mathbb{N}$, which is strictly smaller then the whole
expression). Therefore we skip these terms. For $k=3$ the first sum
reduces to (see also \eqref{eq:superprocessMoment3} for similar calculations). 

\begin{multline*}
\int_{0}^{t}\T{t-s}^{\alpha}\sbr{\left|V_{\tilde{f}}^{2}(\cdot,s)V_{\tilde{f}}^{1}(\cdot,s)\right|}(x)\dd s\leq e^{\alpha t}\intc t\T{t-s}e^{-\alpha s}\sbr{\left|V_{\tilde{f}}^{2}(\cdot,s)e^{(\alpha-\mu)s}(1+\norm{\cdot}{}n)\right|}(x)\dd s\\
\leq e^{\alpha t}\intc t\T{t-s}e^{-\mu s}\sbr{\left|e^{\alpha s}(1+\norm{\cdot}{}{2n})(1+\norm{\cdot}{}n)\right|}(x)\dd s\leq e^{(3\alpha/2)t}(1+\norm{\cdot}{}{3n}),
\end{multline*}
where we used \eqref{eq:mean}, \eqref{eq:tmpEst7}, and $\alpha<2\mu$.
In this way we have proved that $|V_{\tilde{f}}^{3}(x,t)|\leq e^{(3\alpha/2)t}(1+\norm{\cdot}{}n)$. 

Now we proceed to $k=4$. Again the second and third term of \eqref{eq:generalEquation-1}
can be skipped. The sum in the first term reduces to (see also \eqref{eq:superprocessMoment4}
for similar calculations) 
\begin{multline*}
\int_{0}^{t}\T{t-s}^{\alpha}\sbr{\left|V_{\tilde{f}}^{3}(\cdot,s)V_{\tilde{f}}^{1}(\cdot,s)\right|}(x)\dd s+\int_{0}^{t}\T{t-s}^{\alpha}\sbr{\left|V_{\tilde{f}}^{2}(\cdot,s)\right|^{2}}(x)\dd s\\
\leq\int_{0}^{t}\T{t-s}^{\alpha}\sbr{\left|e^{(3\alpha/2)s}(1+\norm{\cdot}{}{3n})e^{(\alpha-\mu)s}(1+\norm{\cdot}{}{3n})\right|}(x)\dd s\\
+\int_{0}^{t}\T{t-s}^{\alpha}\sbr{\left|e^{\alpha s}(1+\norm{\cdot}{}{2n})\right|^{2}}(x)\dd s\leq Ce^{2\alpha t}(1+\norm{\cdot}{}{4n}),
\end{multline*}
where we used the results proved above, Fact \ref{fac:decay1} and
$\alpha<2\mu$. 
\end{proof}
We now proceed to the proof of Theorem \ref{thm:smallBranching}.
In the proof we show a weak convergence hence we will work with $\Lambda$
which has the backbone representation as described in Definition \ref{def:backboneConstruction}.
We start with the following random vector 
\[
Z_{1}(t):=\rbr{e^{-\alpha t}|\Lambda_{t}|,e^{-(\alpha/2)t}(|\Lambda_{t}|-e^{\alpha t}V_{\infty}),e^{-(\alpha/2)t}\ddp{\Lambda_{t}}{\tilde{f}}}.
\]
Let $n\in\mathbb{N}$ to be fixed later; the limit of $Z_{1}(nt)$
is obviously the same as the on of $Z_{1}(t)$. By Fact \ref{fac:totalMassMartingale}
we have $V_{nt}-V_{t}\conv0$ a.s. Therefore the limit of $Z_{1}(t)$
coincides with the limit of 
\[
Z_{2}(t):=\rbr{e^{-\alpha t}|\Lambda_{t}|,e^{-(n\alpha/2)t}(|\Lambda_{nt}|-e^{n\alpha t}V_{\infty}),e^{-(n\alpha/2)t}\ddp{\Lambda_{nt}}{\tilde{f}}}.
\]
It is well known, by the so-called branching property, that the evolution
of the total mass of the system after time $nt$ is the same as it
was split into $\lfloor|\Lambda_{nt}|\rfloor$ of systems $\cbr{\Lambda_{t}^{i}}_{t\geq0}$
having initial mass $1$ and one system $\hat{\Lambda}$ of size $|\Lambda_{nt}|-\lfloor|\Lambda_{nt}|\rfloor$.
For each $i$ we can define a corresponding martingale by formula
\eqref{eq:martingaleDef} and denote its limit by $V_{\infty}^{i}$.
Moreover the limit of martingale of the system $\hat{\Lambda}$ is
denoted by $\hat{V}_{\infty}$. Obviously 
\[
V_{\infty}=e^{-n\alpha t}\rbr{\sum_{i=1}^{\lfloor|\Lambda_{nt}|\rfloor}V_{\infty}^{i}+\hat{V}_{\infty}}.
\]
Therefore, we have 
\[
e^{-(n\alpha/2)t}(|\Lambda_{nt}|-e^{n\alpha t}V_{\infty})=^{d}e^{-(n\alpha/2)t}\sum_{i=1}^{\lfloor|\Lambda_{nt}|\rfloor}(1-V_{\infty}^{i})+e^{-(n\alpha/2)t}(|\Lambda_{nt}|-\lfloor|\Lambda_{nt}|\rfloor-\hat{V}_{\infty}).
\]
 One easily sees that the second term converges to $0$ (in probability)
hence is negligible in our analysis. 

Now we perform arguably the most crucial step of the proof i.e. we
use \eqref{eq:fundamentalDecomposition}

\begin{equation}
e^{-(n\alpha/2)t}\ddp{\Lambda_{nt}}{\tilde{f}}=e^{-(n\alpha/2)t}\ddp{\tilde{X}_{nt}}{\tilde{f}}+e^{-(n\alpha/2)t}\ddp{D_{(n-1)t}^{t}}{\tilde{f}}+e^{-(n\alpha/2)t}\sum_{i=1}^{|z_{t}|}\ddp{\Gamma_{(n-1)t}^{i,t}}{\tilde{f}}.\label{eq:tmp185}
\end{equation}
Using \eqref{eq:tmp17} and Fact \ref{fac:decay1} we obtain 
\[
\ev{}_{\nu}e^{-(n\alpha/2)t}|\ddp{D_{(n-1)t}^{t}}{\tilde{f}}|\leq e^{-(n\alpha/2)t}\ev{}_{\nu}\ddp{D_{(n-1)t}^{t}}{|\tilde{f}|}=e^{-(n\alpha/2)t}e^{-(n-2)\alpha t}\int_{\R^{d}}\T t|\tilde{f}|(x)\nu(\dd x)\conv0,
\]
if $n\geq2$ is large enough. Analogously one can prove that $\ev{}e^{-(n\alpha/2)t}|\ddp{\tilde{X}_{nt}}{\tilde{f}}|\conv0$.
Using the facts proved above we conclude that the limit of $Z_{2}(t)$
is the same as the one of 
\[
Z_{3}(t):=\rbr{e^{-\alpha t}|\Lambda_{t}|,e^{(n\alpha/2)t}\sum_{i=1}^{\lfloor|\Lambda_{nt}|\rfloor}(1-V_{\infty}^{i}),e^{-(\alpha/2)t}\sum_{i=1}^{|z_{t}|}e^{-((n-1)\alpha/2)t}\ddp{\Gamma_{(n-1)t}^{i,t}}{\tilde{f}}}.
\]
We denote $Z_{t}^{n,i}:=e^{-((n-1)\alpha/2)t}\ddp{\Gamma_{(n-1)t}^{i,t}}{\tilde{f}}$
and $z_{t}^{n,i}:=\ev{}\rbr{Z_{t}^{n,i}|z_{t}}$. By Fact \ref{fac:backBoneFacts}
and Fact \ref{fac:decay1} we have
\[
|z_{t}^{n,i}|=ce^{-((n-1)\alpha/2)t}\sinh((n-1)\alpha t)|\T{(n-1)t}\tilde{f}(z_{t}(i))|\cleq e^{(\alpha(n-1)/2)t}e^{-\mu(n-1)t}(1+\norm{z_{t}(i)}{}k),
\]
for some $c>0,k>0$. Therefore by \citep[(23)]{Adamczak:2011kx} we
have 
\[
\ev{}\rbr{e^{-(\alpha/2)t}\sum_{i=1}^{|z_{t}|}|z_{t}^{n,i}|}\cleq e^{-(\alpha/2)t}e^{\alpha((n-1)/2)t}e^{-\mu(n-1)t}\conv0,\quad\text{as }t\conv+\infty,
\]
if we take $n$ sufficiently large (from this moment on we keep $n$
fixed). Therefore the limit of $Z_{3}$ is the same as the one of
\[
Z_{4}(t):=\rbr{e^{-\alpha t}|\Lambda_{t}|,e^{(n\alpha/2)t}\sum_{i=1}^{\lfloor|\Lambda_{nt}|\rfloor}(1-V_{\infty}^{i}),e^{-(\alpha/2)t}\sum_{i=1}^{|z_{t}|}(Z_{t}^{n,i}-z_{t}^{n,i})}.
\]
Let $l$ be the same as in \eqref{eq:uniformConvergence}. We check
that 
\begin{multline*}
\ev{}\sum_{i=1}^{|z_{t}|}|Z_{t}^{n,i}-z_{t}^{n,i}|1_{\cbr{\norm{z_{t}(i)}{}{}\geq t^{l}}}=e^{\alpha t}\ev{}|Z_{t}^{n,1}-z_{t}^{n,1}|1_{\cbr{\norm{z_{t}(1)}{}{}\geq t^{l}}}\\
\leq e^{\alpha t}\ev{}(Z_{t}^{n,1}-z_{t}^{n,1})^{2}\pr{\norm{z_{t}(1)}{}{}\geq t^{l}}\cleq e^{\alpha t}e^{-ct^{l}}\ev{}(Z_{t}^{n,1})^{2}\conv0\quad\text{as }t\conv+\infty.
\end{multline*}
To justify the above convergence it is enough to notice that by Fact
\ref{fac:backBoneFacts} we have $\ev{}(Z_{t}^{n,1})^{2}\cleq e^{ct}$,
for some $c>0$. By these and Fact \ref{fac:martingaleEquvalence}
the limit of $Z_{3}$ is the same as the one of 
\[
Z_{5}(t):=\rbr{\frac{\beta}{\alpha}e^{-\alpha t}|z_{t}|,e^{-(n\alpha/2)t}\sum_{i=1}^{\lfloor|\Lambda_{nt}|\rfloor}(1-V_{\infty}^{i}),e^{-(\alpha/2)t}\sum_{i=1}^{|z_{t}|}(Z_{t}^{n,i}-z_{t}^{n,i})1_{\cbr{\norm{z_{t}(i)}{}{}<t^{l}}}}.
\]
We denote now 
\[
Z_{6}(t):=\rbr{\frac{\beta}{\alpha}e^{-\alpha t}|z_{t}|,\lfloor|\Lambda_{nt}|\rfloor{}^{-1/2}\sum_{i=1}^{\lfloor|\Lambda_{nt}|\rfloor}(1-V_{\infty}^{i}),|z_{t}|^{-1/2}\sum_{i=1}^{|z_{t}|}(Z_{t}^{n,i}-z_{t}^{n,i})1_{\cbr{\norm{z_{t}(i)}{}{}<t^{l}}}}.
\]
Moreover, we consider the above quantity conditionally on the event
$\cbr{|\Lambda_{nt}|\geq t}$. We will denote the corresponding expectation
by $\ev{}_{\nu}^{t}$ and write

\begin{multline*}
\chi_{1}(\theta_{1},\theta_{2},\theta_{3};t):=\ev{}_{\nu}^{t}\exp\left\{ \ii\theta_{1}\frac{\beta}{\alpha}e^{-\alpha t}|z_{t}|+\ii\theta_{2}\lfloor|\Lambda_{nt}|\rfloor^{-1/2}\sum_{i=1}^{\lfloor|\Lambda_{nt}|\rfloor}(1-V_{\infty}^{i})\right.\\
\left.+\ii\theta_{3}|z_{t}|^{-1/2}\sum_{i=1}^{|z_{t}|}(Z_{t}^{n,i}-z_{t}^{n,i})1_{\cbr{\norm{z_{t}(i)}{}{}<t^{l}}}\right\} .
\end{multline*}
Let us denote by $h$ the characteristic function of $(1-V_{\infty}^{i})$.
Using the conditional expectation we obtain
\[
\chi_{1}(\theta_{1},\theta_{2},\theta_{3};t):=\ev{}_{\nu}^{t}\exp\cbr{\ii\theta_{1}\frac{\beta}{\alpha}e^{-\alpha t}|z_{t}|+\ii\theta_{3}|z_{t}|^{-1/2}\sum_{i=1}^{|z_{t}|}(Z_{t}^{n,i}-z_{t}^{n,i})1_{\cbr{\norm{z_{t}(i)}{}{}<t^{l}}}}h\rbr{\theta_{2}/\lfloor|\Lambda_{nt}|\rfloor^{-1/2}}^{\lfloor|\Lambda_{nt}|\rfloor}.
\]
By the central limit theorem we know that $h\rbr{\theta_{2}/\sqrt{n}}^{n}\conv e^{-\theta_{2}^{2}/(2\sigma_{V}^{2})}$,
where $\sigma_{V}=\frac{2\beta}{\alpha}$ (see \eqref{eq:varianceMaringaleLimit}).
Let us now denote 
\[
\chi_{2}(\theta_{1},\theta_{2},\theta_{3};t):=\ev{}_{\nu}^{t}\exp\cbr{\ii\theta_{1}\frac{\beta}{\alpha}e^{-\alpha t}|z_{t}|+\ii\theta_{3}|z(t)|^{-1/2}\sum_{i=1}^{|z_{t}|}(Z_{t}^{n,i}-z_{t}^{n,i})1_{\cbr{\norm{z_{t}(i)}{}{}<t^{l}}}}e^{-\theta_{2}^{2}/(2\sigma_{V}^{2})}.
\]
We check that 
\[
|\chi_{1}(\theta_{1},\theta_{2},\theta_{3};t)-\chi_{2}(\theta_{1},\theta_{2},\theta_{3};t)|\leq\ev{}_{\nu}^{t}\left|h\rbr{\theta_{2}\lfloor|X_{nt}|\rfloor^{-1/2}}^{\lfloor|X_{nt}|\rfloor}-e^{-\theta_{2}^{2}/(2\sigma_{V}^{2})}\right|\conv0,\quad\text{as }t\conv+\infty.
\]
Let us now notice that $\mathbb{P}_{\nu}(\epsilon\leq|X_{nt}|\leq t)\conv0$
for any $\epsilon>0$. Therefore $1_{\cbr{|X_{nt}|\geq t}}\conv1_{Ext^{c}}$.
We denote the expectation conditional on $Ext^{c}$ by $\tilde{\mathbb{E}}_{\nu}$.
Further we write

\[
\chi_{3}(\theta_{1},\theta_{2},\theta_{3};t):=\tilde{\mathbb{E}}_{\nu}\exp\cbr{\ii\theta_{1}\frac{\beta}{\alpha}e^{\alpha t}|z_{t}|+\ii\theta_{3}|z(t)|^{-1/2}\sum_{i=1}^{|z(t)|}(Z_{t}^{n,i}-z_{t}^{n,i})1_{\cbr{\norm{z_{t}(i)}{}{}<t^{l}}}}e^{-\theta_{2}^{2}/(2\sigma_{V}^{2})}.
\]
One easily checks that its limit is the same as the one of $\chi_{2}$. 

Let us now consider a sequence of $\cbr{a_{m}}_{m\geq0},\cbr{p_{m}}_{m\geq0}$
such that each $a_{m}\in\mathbb{N}$ and $p_{m}\in\R^{d\times a_{m}}$.
Moreover, we assume that $a_{m}\sim e^{\alpha m}$ and $\forall_{i\leq a_{m}}$
we have $\norm{p_{m}(i)}{}{}\leq m^{l}$. We denote 
\[
S_{m}:=a_{m}^{-1/2}\sum_{i=1}^{a_{m}}(\tilde{Z}{}_{m}^{n,i}-\tilde{z}_{m}^{n,i}),
\]
where $\tilde{Z}_{m}^{n,i}$ is defined as $Z_{t}^{n,i}$, but with
assumption that it starts off from position $p_{m}(i)$, and $\tilde{z}_{m}^{n,i}=\ev{}\tilde{Z}_{m}^{n,i}$
. 
\begin{multline*}
Var\rbr{a_{m}^{-1/2}\sum_{i=1}^{a_{m}}(\tilde{Z}_{m}^{n,i}-\tilde{z}_{m}^{n,i})}=a_{m}^{-1}\sum_{i=1}^{a_{m}}Var(\tilde{Z}_{m}^{n,i}-\tilde{z}_{m}^{n,i})\\
=a_{m}^{-1}\sum_{i=1}^{a_{m}}\rbr{\ev{}(\tilde{Z}_{m}^{n,i})^{2}-(\tilde{z}_{m}^{n,i})^{2}}=a_{m}^{-1}\sum_{i=1}^{a_{m}}\ev{}(\tilde{Z}_{m}^{n,i})^{2}-a_{m}^{-1}\sum_{i=1}^{a_{m}}(\tilde{z}_{m}^{n,i})^{2}=(*).
\end{multline*}
By Fact \ref{fac:backBoneFacts} and Fact \ref{fac:decay1} we know
that for some $k>0$ we have 
\begin{equation}
|\tilde{z}_{m}^{n,i}|\cleq e^{((n-1)\alpha/2)m}|\T{(n-1)m}\tilde{f}(p_{m}(i))|\cleq m^{2k}e^{(n-1)(\alpha/2-\mu)m}.\label{eq:helperEstimation}
\end{equation}
Therefore by assumption $\alpha<2\mu$ the second term of ({*}) disappears.
By Fact \ref{fac:backBoneFacts} we have
\[
\ev{}(Z_{m}^{n,i})^{2}=e^{-\alpha(n-1)m}V_{\tilde{f}}^{2}(p_{m}(i),(n-1)m).
\]
Using \eqref{eq:uniformConvergence} it is easy to check that $a_{m}^{-1}\sum_{i=1}^{a_{m}}\ev{}(\tilde{Z}_{m}^{n,i})^{2}\conv\sigma_{f}^{2}$.
Therefore, by the CLT, we know that
\[
S_{m}\conv^{d}\mathcal{N}(0,\sigma_{f}^{2}),
\]
once  we check the Lyapunov condition. To this end, we use \eqref{eq:helperEstimation}
and \eqref{eq:fourthMomentEstimate}, viz.,
\begin{multline*}
a_{m}^{-2}\sum_{i=1}^{a_{m}}\ev{}(\tilde{Z}_{m}^{n,i}-\tilde{z}_{m}^{n,i})^{4}\cleq a_{m}^{-2}\sum_{i=1}^{a_{m}}\sum_{k=0}^{4}|\tilde{z}_{m}^{n,i}|^{4-k}|\ev{}(Z_{m}^{n,i})^{k}|\cleq a_{m}^{-1}+a_{m}^{-2}\sum_{i=1}^{a_{m}}\sum_{k=2}^{4}\ev{}|Z_{m}^{n,i}|^{k}\\
\leq a_{m}^{-1}+a_{m}^{-2}\sum_{i=1}^{a_{m}}\sum_{k=2}^{4}\rbr{\ev{}|Z_{m}^{n,i}|^{4}}^{k/4}\cleq a_{m}^{-1}+a_{m}^{-2}\sum_{i=1}^{a_{m}}(1+\norm{p_{m}(i)}{}n)\cleq a_{m}^{-1}m^{ln}\conv0.
\end{multline*}
Applying the above CLT to $|z(t)|^{-1/2}\sum_{i=1}^{|z(t)|}(Z_{t}^{n,i}-z_{t}^{n,i})1_{\cbr{\norm{z_{t}(i)}{}{}<t}}$
(and performing the same reasoning as in case of $\chi_{1}$ and $\chi_{2}$)
one can show that the limit of $\chi_{3}$ is the same as the one
of 

\[
\chi_{4}(\theta_{1},\theta_{2},\theta_{3};t):=\tilde{\mathbb{E}}_{\nu}\exp\cbr{\ii\theta_{1}\frac{\beta}{\alpha}e^{-\alpha t}|z_{t}|}e^{-\theta_{3}^{2}/(2\sigma_{f}^{2})}e^{-\theta_{2}^{2}/(2\sigma_{V}^{2})},
\]
where $\sigma_{f}^{2}$ is given by \eqref{eq:sigmaDefinition}. Therefore,
by Fact \ref{fac:martingaleEquvalence}, we know that 
\[
\chi_{4}(\theta_{1},\theta_{2},\theta_{3};t)\conv\rbr{\tilde{\mathbb{E}}_{\nu}\exp\cbr{\ii\theta_{1}\frac{\beta}{\alpha}W_{\infty}}}e^{-\theta_{2}^{2}/(2\sigma_{V}^{2})}e^{-\theta_{3}^{2}/(2\sigma_{f}^{2})}=\rbr{\tilde{\mathbb{E}}_{\nu}\exp\cbr{\ii\theta_{1}V_{\infty}}}e^{-\theta_{2}^{2}/(2\sigma_{V}^{2})}e^{-\theta_{3}^{2}/(2\sigma_{f}^{2})}.
\]
Coming back step by step one sees that this proves that $Z_{6}(t)\conv^{d}(\tilde{V}{}_{\infty},G_{1},G_{2})$,
where the limit is as described in Theorem \ref{thm:smallBranching}.
This means that on $Ext^{c}$ we have convergence $Z_{5}(t)\conv^{d}(V_{\infty},V_{\infty}^{-1/2}G_{1},V_{\infty}^{-1/2}G_{2})$.
By standard arguments we also have $Z_{1}(t)\conv^{d}(V_{\infty},V_{\infty}^{-1/2}G_{1},V_{\infty}^{-1/2}G_{2})$
which is equivalent to the thesis of the theorem.

\section{Proof of Theorem \ref{thm:fastBranching}\label{sec:fastBranching}}

We first gather properties of $V_{f}^{k}$, extending the list in
Fact \ref{fac:backBoneFacts}, in the case when $\alpha>2\mu$. We
recall that $\tilde{f}(x)=f(x)-\ddp f{\eq}$. 
\begin{fact}
Let $\alpha>2\mu$ and $f\in\pspace(\R^{d})$. Then, there exists
$C,n>0$ such that 
\begin{equation}
e^{-2(\alpha-\mu)t}|V_{\tilde{f}}^{2}(x,t)|\leq C(1+\norm x{}n).\label{eq:secondMomentFast}
\end{equation}
\end{fact}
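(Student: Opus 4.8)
The plan is to read off the closed formula for the second factorial moment from Fact \ref{fac:backBoneFacts} and estimate it term by term, exactly in the spirit of the bound \eqref{eq:estimationAAA}, but this time keeping track of the extra exponential decay that $\T s\tilde f$ enjoys because $\tilde f$ is centered with respect to $\eq$. Concretely, I would apply \eqref{eq:secondMoment} with $\tilde f$ in place of $f$,
\[
V_{\tilde f}^{2}(x,t)=\frac{\beta}{\alpha}\int_{0}^{t}\T{t-s}^{\alpha}\sbr{2\beta\rbr{\T s^{\alpha}\tilde f(\cdot)}^{2}-2\beta\rbr{u_{\tilde f}^{*,1}(\cdot,s)}^{2}-2\alpha u_{\tilde f}^{*,2}(\cdot,s)}(x)\dd s,
\]
and use the three input estimates: by \eqref{eq:decayCentered}, $|\T s^{\alpha}\tilde f(x)|=e^{\alpha s}|\T s\tilde f(x)|\cleq e^{(\alpha-\mu)s}(1+\norm x{}n)$; since $u_{\tilde f}^{*,1}(x,s)=\T s^{-\alpha}\tilde f(x)$ (the starred analogue of \eqref{eq:superpocesMoment1}) again \eqref{eq:decayCentered} gives $|u_{\tilde f}^{*,1}(x,s)|\cleq e^{-(\alpha+\mu)s}(1+\norm x{}n)$; and by \eqref{eq:estimaiton77}, $|u_{\tilde f}^{*,2}(x,s)|\cleq e^{-\alpha s}(1+\norm x{}n)$.

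The dominant term in the bracket is therefore $\rbr{\T s^{\alpha}\tilde f}^{2}$, of size $\cleq e^{2(\alpha-\mu)s}(1+\norm x{}{2n})$, while the other two terms are of size $\cleq e^{-\alpha s}(1+\norm x{}{2n})$, much smaller. Next I would pull out $\T{t-s}^{\alpha}g=e^{\alpha(t-s)}\T{t-s}g$ and bound $\T{t-s}\sbr{1+\norm{\cdot}{}{2n}}(x)\cleq 1+\norm x{}{2n}$ uniformly in $t-s\geq0$ via \eqref{eq:triavialEstimation}. The contribution of the leading term to $V_{\tilde f}^{2}(x,t)$ is then
\[
\cleq (1+\norm x{}{2n})\int_{0}^{t}e^{\alpha(t-s)}e^{2(\alpha-\mu)s}\dd s=(1+\norm x{}{2n})\,e^{\alpha t}\int_{0}^{t}e^{(\alpha-2\mu)s}\dd s.
\]
Here, and only here, the hypothesis $\alpha>2\mu$ enters: since $\alpha-2\mu>0$ the last integral is $\cleq e^{(\alpha-2\mu)t}$, so this piece is $\cleq e^{2(\alpha-\mu)t}(1+\norm x{}{2n})$. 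The two remaining pieces are $\cleq e^{\alpha t}(1+\norm x{}{2n})\int_{0}^{t}e^{-2\alpha s}\dd s\cleq e^{\alpha t}(1+\norm x{}{2n})$, which is $o(e^{2(\alpha-\mu)t})$ because $\alpha<2(\alpha-\mu)$. Multiplying through by $e^{-2(\alpha-\mu)t}$ gives \eqref{eq:secondMomentFast} (with the generic exponent $n$ of the statement taken to be the $2n$ produced above).

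I do not expect a genuine obstacle: the argument is the same computation that underlies \eqref{eq:estimationAAA}, with the single additional bookkeeping point that the centered function contributes a factor $e^{-\mu s}$ in each occurrence of $\T s\tilde f$. The only thing requiring care is that $\int_{0}^{t}e^{(\alpha-2\mu)s}\dd s$ grows like $e^{(\alpha-2\mu)t}$ rather than remaining bounded or growing polynomially — which is precisely what distinguishes the present regime $\alpha>2\mu$ from $\alpha\le 2\mu$ and is what forces the normalization $e^{(\alpha-\mu)t}$ instead of $e^{(\alpha/2)t}$ in Theorem \ref{thm:fastBranching}.
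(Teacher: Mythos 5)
Your proposal matches the paper's own argument step for step: both start from \eqref{eq:secondMoment} specialised to $\tilde f$, both feed in \eqref{eq:decayCentered} to get $|\T s^{\alpha}\tilde f(x)|\cleq e^{(\alpha-\mu)s}(1+\norm x{}n)$ and \eqref{eq:estimaiton77} for $u_{\tilde f}^{*,2}$, both pull $e^{\alpha(t-s)}$ out of $\T{t-s}^{\alpha}$ and bound $\T{t-s}[(1+\norm{\cdot}{}n)^2]$ via \eqref{eq:triavialEstimation}, and both identify $e^{\alpha t}\int_0^t e^{(\alpha-2\mu)s}\dd s\cleq e^{2(\alpha-\mu)t}$ (valid precisely because $\alpha>2\mu$) as the dominant contribution with the remaining two terms of lower order $e^{\alpha t}$. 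The only cosmetic difference is that you use the slightly sharper bound $|u_{\tilde f}^{*,1}(x,s)|\cleq e^{-(\alpha+\mu)s}(1+\norm x{}n)$ where the paper is content with $e^{-\alpha s}$; since this term is subdominant either way, the two proofs are equivalent.
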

\begin{proof}
By \eqref{eq:secondMoment}, Fact \ref{fac:decay1} and using \eqref{eq:estimaiton77}
we obtain 
\begin{multline*}
|V_{\tilde{f}}^{2}(x,t)|\cleq\int_{0}^{t}\T{t-s}^{\alpha}\sbr{\rbr{\T s^{\alpha}\tilde{f}(\cdot)}^{2}}(x)\dd s+\int_{0}^{t}\T{t-s}^{\alpha}\sbr{\rbr{u_{\tilde{f}}^{*,1}(\cdot,s)}^{2}}(x)\dd s\\
+\int_{0}^{t}\T{t-s}^{\alpha}\sbr{u_{\tilde{f}}^{*,2}(\cdot,s)}(x)\dd s\cleq\intc te^{\alpha(t-s)}e^{2(\alpha-\mu)s}\T{t-s}\sbr{\rbr{1+\norm{\cdot}{}n}^{2}}(x)\dd s\\
+\int_{0}^{t}e^{\alpha(t-s)}e^{-2\alpha s}\T{t-s}\sbr{\rbr{1+\norm{\cdot}{}n}^{2}}(x)\dd s\\
+\int_{0}^{t}e^{\alpha(t-s)}e^{-2\alpha s}\T{t-s}\sbr{1+\norm{\cdot}{}{2n}}(x)\dd s\cleq e^{2(\alpha-\mu)t}(1+\norm x{}{2n}).
\end{multline*}

\end{proof}
We assume that $\Lambda$ has the backbone representation as described
in Section \ref{sub:Backbone-construction}. Although this is harmless
when we prove the weak convergence in \eqref{eq:factConv1} some additional
argument will be required in case of \eqref{eq:fastConv2}. This will
be explained at the end of the proof. Our first aim is to prove the
convergence of the spatial fluctuations. To this end we denote 
\[
Y_{1}(t):=e^{-(\alpha-\mu)t}\rbr{\ddp{\Lambda_{t}}f-|\Lambda_{t}|\ddp f{\eq}}=e^{-(\alpha-\mu)t}\ddp{\Lambda_{t}}{\tilde{f}},
\]
where as usual $\tilde{f}(x)=f(x)-\ddp f{\eq}$. We define 
\[
Y_{2}(s,t):=e^{-(\alpha-\mu)t}\sum_{i=1}^{|z_{t}|}e^{-(\alpha-\mu)s}\ddp{\Gamma_{s}^{i,t}}{\tilde{f}},
\]
By \eqref{eq:fundamentalDecomposition} and using the same argument
as in the previous proof (e.g. see \eqref{eq:tmp185}) one checks
that 

\[
|Y_{1}(t+s)-Y_{2}(s,t)|\conv0\quad\text{in probability,}
\]
if $s,t\conv+\infty$ and $s\geq t$ (this is a sufficient condition).
We write 
\[
Y_{2}(s,t)=e^{-(\alpha-\mu)t}\sum_{i=1}^{|z_{t}|}(Z_{s}^{i,t}-z_{s}^{i,t})+e^{-(\alpha-\mu)t}\sum_{i=1}^{|z_{t}|}z_{s}^{i,t}=:Y_{3}(s,t)+Y_{4}(s,t),
\]
where $Z_{s}^{i,t}=e^{-(\alpha-\mu)s}\ddp{\Gamma_{s}^{i,t}}{\tilde{f}}$
and $z_{s}^{i,t}=\ev{}\rbr{Z_{i}^{i,t}|z_{t}}$. One checks that
\begin{multline}
\ev{}Y_{3}(s,t)^{2}=e^{-2(\alpha-\mu)t}\ev{}\rbr{\sum_{i=1}^{|z_{t}|}\sum_{j=1}^{|z_{t}|}(Z_{s}^{i,t}-z_{s}^{i,t})(Z_{s}^{j,t}-z_{s}^{j,t})}\\
=e^{-2(\alpha-\mu)t}\ev{}\rbr{\ev{}\rbr{\left.\rbr{\sum_{i=1}^{|z_{t}|}\sum_{i=1}^{|z_{t}|}(Z_{s}^{i,t}-z_{s}^{i,t})(Z_{s}^{j,t}-z_{s}^{j,t})}\right|z_{t}}}\\
=e^{-2(\alpha-\mu)t}\ev{}\rbr{\sum_{i=1}^{|z_{t}|}\sum_{i=1}^{|z_{t}|}\ev{}\rbr{(Z_{s}^{i,t}-z_{s}^{i,t})(Z_{s}^{j,t}-z_{s}^{j,t})|z_{t}}}\\
=e^{-2(\alpha-\mu)t}\ev{}\rbr{\sum_{i=1}^{|z_{t}|}\ev{}\rbr{(Z_{s}^{i,t}-z_{s}^{i,t})^{2}|z_{t}}}.\label{eq:covarianceCalculations}
\end{multline}
By \eqref{eq:secondMomentFast}, Fact \ref{fac:backBoneFacts} and
Fact \ref{fac:decay1} we get $\ev{}\rbr{(Z_{s}^{i,t}-z_{s}^{i,t})^{2}|z_{t}}\cleq\norm{z_{t}(i)}{}n+1$,
for some $n\geq0$. And therefore by Fact \citep[(23)]{Adamczak:2011kx},
under assumption $\alpha>2\mu$, we obtain
\[
\ev{}Y_{3}(s,t)^{2}\cleq e^{-2(\alpha-\mu)t}\ev{}\rbr{\sum_{i=1}^{|z_{t}|}(\norm{z_{t}(i)}{}n+1)}\cleq e^{-2(\alpha-\mu)t}e^{\alpha t}\conv0,\quad\text{as }t\conv+\infty.
\]
By \eqref{eq:backBoneMoments} and \eqref{eq:mean} we have 
\begin{multline*}
z_{s}^{i,t}=\frac{\beta}{\alpha}\rbr{2e^{-\alpha s}\sinh(\alpha s)}\rbr{e^{\mu s}\T t\tilde{f}(z_{t}(i))}=\frac{\beta}{\alpha}\rbr{2e^{-\alpha s}\sinh(\alpha s)}\rbr{e^{\mu s}\T t\tilde{f}(z_{t}(i))-z_{t}(i)\circ\ddp{\grad f}{\eq}}\\
+\frac{\beta}{\alpha}\rbr{2e^{-\alpha s}\sinh(\alpha s)}\rbr{z_{t}(i)\circ\ddp{\grad f}{\eq}}=x_{s}^{i,t}+y_{s}^{i,t}.
\end{multline*}
This leads to
\[
Y_{4}(s,t)=Y_{5}(s,t)+Y_{6}(s,t):=e^{-(\alpha-\mu)t}\sum_{i=1}^{|z_{t}|}x_{s}^{i,t}+e^{-(\alpha-\mu)t}\sum_{i=1}^{|z_{t}|}y_{s}^{i,t}.
\]
By Fact \ref{fac:decay1} we have $|x_{s}^{i,t}|\leq\tilde{C}(1+\norm{z_{t}(i)}{}n)e^{-\mu s}$
for some $n\geq0$. Therefore
\[
|Y_{5}(s,t)|\leq e^{-(\alpha-\mu)t}e^{-\mu s}\sum_{i=1}^{|z_{t}|}(1+\norm{z_{t}(i)}{}n)=Ce^{\mu(t-s)}\conv0,
\]
whenever if $s,t\conv+\infty$ and $\frac{s}{t}\conv+\infty$. To
treat $Y_{6}(s,t)$ we recall \eqref{eq:secondMartingale}. Using
Fact \ref{fac:martingaleEquvalence} we obtain 
\begin{multline*}
Y_{6}(s,t):=\rbr{2\frac{\beta}{\alpha}e^{-\alpha s}\sinh(\alpha s)}e^{-(\alpha-\mu)t}\sum_{i=1}^{|z_{t}|}\rbr{z_{t}(i)\circ\ddp{\grad f}{\eq}}\\
=\rbr{2\frac{\beta}{\alpha}e^{-\alpha s}\sinh(\alpha s)}\rbr{\ddp{\grad f}{\eq}\circ e^{-(\alpha-\mu)t}\sum_{i=1}^{|z_{t}|}z_{t}(i)}\\
\conv\frac{\beta}{\alpha}\rbr{\ddp{\grad f}{\eq}\circ I_{\infty}}=\ddp{\grad f}{\eq}\circ H_{\infty},\quad\text{a.s.}
\end{multline*}
as $s,t\conv+\infty$. Reviewing the above steps one checks easily
that 
\begin{equation}
Y_{1}(t)\conv\ddp{\grad f}{\eq}\circ H_{\infty},\quad\text{in probability,}\label{eq:tmpConv}
\end{equation}
as $t\conv+\infty$. This was proved for a special (backbone) realization
of the superprocess. Let us fix some other realization $\hat{X}$,
denote $\hat{Y}_{1}(t):=e^{-(\alpha-\mu)t}\ddp{\hat{X}_{t}}{\tilde{f}}$
and by $\hat{H}_{\infty}$ the limit of corresponding martingale (see
\eqref{eq:secondMartingale}). From \eqref{eq:tmpConv} we can conclude
only that $\hat{Y}_{1}(t)\conv\ddp{\grad f}{\eq}\circ H_{\infty}$
in distribution. However, from \eqref{eq:tmpConv} we know that $\rbr{Y_{1}(t)-\ddp{\grad f}{\eq}\circ H_{\infty}}\conv0$
in probability. From this we conclude that $\rbr{\hat{Y}_{1}(t)-\ddp{\grad f}{\eq}\circ\hat{H}_{\infty}}\conv0$
in distribution and hence also in probability. In this way \eqref{eq:tmpConv}
holds for any realization of the superprocess. 

To conclude the proof we notice that to obtain the joint convergence
in \eqref{eq:factConv1} one needs to use the same methods as in the
proof of Theorem \ref{thm:smallBranching}. 

In this section we also present 
\begin{proof}
(of Fact \ref{fac:Hmartingale})The fact that $H$ is a martingale
follows directly from the Markov property of $X$, \eqref{eq:superpocesMoment1}
and \eqref{eq:semigroupFirstEignenvalue}, viz.,
\[
\ev{}_{\nu}\ddp{X_{t}}{id}=\int_{\R^{d}}u_{id}^{1}(x,t)\nu(\dd x)=e^{\alpha t}\int_{\R^{d}}\T tid(x)\nu(\dd x)=e^{(\alpha-\mu)t}\int_{\R^{d}}x\nu(\dd x),
\]
where $id(x)=x$. By standard Laplace transform arguments one easily
checks that $H$ is $L^{2}$-bounded if only $e^{-2(\alpha-\mu)t}|u_{id}^{2}|\leq c(x)$,
for any $t\geq0$ and some function $c(x)$. We recall \eqref{eq:superprocesMoment2},
Fact \ref{fac:decay1} and write 
\begin{multline*}
e^{-2(\alpha-\mu)t}|u_{id}^{2}(x,t)|\leq e^{-2(\alpha-\mu)t}\int_{0}^{t}\T{t-s}^{\alpha}\sbr{(\T s^{\alpha}id(\cdot))^{2}}(x)\dd s=e^{-2(\alpha-\mu)t}\int_{0}^{t}\T{t-s}^{\alpha}\sbr{e^{2(\alpha-\mu)s}id(\cdot)^{2}}(x)\dd s\\
=e^{-(\alpha-2\mu)t}\int_{0}^{t}e^{(\alpha-2\mu)s}\T{t-s}\sbr{id(\cdot)^{2}}(x)\dd s\cleq(1+\norm x{}n).
\end{multline*}

\end{proof}

\section{Proof of Theorem \ref{thm:criticalBranching}\label{sec:criticalBranching}}

We first gather properties of $V_{f}^{k}$, extending the list in
Fact \ref{fac:backBoneFacts}, in the case when $\alpha=2\mu$. We
recall that $\tilde{f}(x)=f(x)-\ddp f{\eq}$. 
\begin{fact}
\label{fac:criticalBranchingMoments}Let $\alpha=2\mu$ and $f\in\pspace(\R^{d})$.
Then for any $x\in\R^{d}$ there is 
\begin{equation}
t^{-1/2}e^{-(\alpha/2)t}V_{\tilde{f}}^{1}(x,t)\conv0,\quad\text{as }t\conv+\infty,\label{eq:criticalBranchingMoment1}
\end{equation}
\textup{
\begin{equation}
t^{-1}e^{-\alpha t}V_{\tilde{f}}^{2}(x,t)\conv\sigma_{f}^{2},\quad\text{as }t\conv+\infty,\label{eq:criticalBranchingMoment2}
\end{equation}
where $\sigma_{f}^{2}$ is given by }\eqref{eq:criticalSigma}\textup{.
We have $\sigma_{f}^{2}<+\infty$. Moreover, there exists $l>0$ such
that 
\begin{equation}
\sup_{\norm x{}{}\leq t^{l}}|t^{-1}e^{-\alpha t}V_{\tilde{f}}^{2}(x,t)-\sigma_{f}^{2}|\conv0,\quad\text{as }t\conv+\infty.\label{eq:criticalBranchingUniformConvergence}
\end{equation}
Finally }there exists $C,n>0$ such that 
\begin{equation}
|V_{\tilde{f}}^{4}(x,t)|\leq Ct^{2}e^{2\alpha t}(\norm x{}{2n}+1).\label{eq:criticalBranchingFourthMoment}
\end{equation}
\end{fact}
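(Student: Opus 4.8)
The plan is to run the argument behind Fact~\ref{fac:slowBranchingMoments} with $\alpha=2\mu$; the one genuinely new feature is that the inner integrand in the formula \eqref{eq:secondMoment} for $V_{\tilde f}^2$ no longer decays but converges to a strictly positive constant, and integrating that constant over $[0,t]$ is precisely what produces the extra factor $t$. All of \eqref{eq:mean}, \eqref{eq:secondMoment}, \eqref{eq:generalEquation} are available for $f\in\pspace(\R^d)$ by Fact~\ref{fac:backBoneFacts}, and I would use freely Fact~\ref{fac:decay1} (in particular the rate \eqref{eq:decaySpeed}) together with \eqref{eq:estimaiton77}, \eqref{eq:tildeEstimate}. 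Claim \eqref{eq:criticalBranchingMoment1} is immediate from \eqref{eq:mean}: one has $t^{-1/2}e^{-(\alpha/2)t}V_{\tilde f}^1(x,t)=-\frac{2\beta}{\alpha}t^{-1/2}\bigl(e^{-(\alpha/2)t}\sinh(\alpha t)\bigr)\T t\tilde f(x)$, and since $e^{-(\alpha/2)t}\sinh(\alpha t)\cleq e^{(\alpha/2)t}$ while $|\T t\tilde f(x)|\cleq(1+\norm x{}n)e^{-\mu t}=(1+\norm x{}n)e^{-(\alpha/2)t}$, the product is $O(1)$ and the prefactor $t^{-1/2}$ kills it.

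For \eqref{eq:criticalBranchingMoment2} I would split $e^{-\alpha t}V_{\tilde f}^2(x,t)=I_1(t)+I_2(t)$ as in \eqref{eq:tmp19-1}, with $I_1$ carrying $\rbr{\T s^{\alpha}\tilde f}^2$ and $I_2$ carrying $u_{\tilde f}^{*,1},u_{\tilde f}^{*,2}$. Using $u_{\tilde f}^{*,1}(\cdot,s)=\T s^{-\alpha}\tilde f$ together with \eqref{eq:decayCentered} and \eqref{eq:estimaiton77}, the $s$-integrand of $I_2$ is $\cleq e^{-cs}(1+\norm x{}{m})$ for some $c,m>0$ uniformly in $t\ge s$, so $I_2(t)=O(1)$ and $t^{-1}I_2(t)\conv0$. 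In $I_1$ I would pull $e^{-\alpha s}$ inside $\T{t-s}$ and use the identity $e^{-\alpha s}\rbr{\T s^{\alpha}\tilde f(y)}^2=\rbr{e^{\mu s}\T s\tilde f(y)}^2$ (valid since $\alpha=2\mu$); by \eqref{eq:decaySpeed} this equals $\rbr{y\circ\ddp{\grad f}{\eq}}^2$ up to an error $\cleq(1+\norm y{}m)e^{-\mu s}$, and \eqref{eq:triavialEstimation} bounds the whole inner function by $C(1+\norm y{}m)$ uniformly in $s$. Hence the $s$-integrand $\frac{2\beta^2}{\alpha}\T{t-s}\bigl[e^{-\alpha s}\rbr{\T s^{\alpha}\tilde f}^2\bigr](x)$ of $I_1(t)$ is uniformly bounded on $0\le s\le t$ and converges to $\frac{2\beta^2}{\alpha}\ddp{\eq}{\rbr{\cdot\circ\ddp{\grad f}{\eq}}^2}=\sigma_f^2$ whenever $s\conv+\infty$ and $t-s\conv+\infty$ (the error in the $s$-variable being uniform in $t-s$ by the $e^{-\mu s}$ bound just mentioned, the error in the $t-s$-variable being the usual OU relaxation). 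A Cesàro/Abelian argument then gives $t^{-1}I_1(t)\conv\sigma_f^2$; finiteness of $\sigma_f^2$ is clear, $\ddp{\grad f}{\eq}$ being a fixed vector and $\eq$ a Gaussian.

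For \eqref{eq:criticalBranchingUniformConvergence} I would fix $l\in(0,1/(2n))$ ($n$ the polynomial exponent of Fact~\ref{fac:decay1}, enlarged if necessary) and, using \eqref{eq:criticalBranchingMoment2} at $x=0$, reduce to showing $\sup_{\norm x{}{}\le t^{l}}e^{-\alpha t}\bigl|V_{\tilde f}^2(x,t)-V_{\tilde f}^2(0,t)\bigr|=o(t)$. Writing, from \eqref{eq:secondMoment}, $e^{-\alpha t}\bigl(V_{\tilde f}^2(x,t)-V_{\tilde f}^2(0,t)\bigr)=\frac{\beta}{\alpha}\int_0^t\rbr{\T{t-s}g_s(x)-\T{t-s}g_s(0)}\dd s$ with $g_s(y):=e^{-\alpha s}\bigl(2\beta\rbr{\T s^{\alpha}\tilde f(y)}^2-2\beta\rbr{u_{\tilde f}^{*,1}(y,s)}^2-2\alpha u_{\tilde f}^{*,2}(y,s)\bigr)$, and noting $|g_s(y)|\cleq 1+\norm y{}{2n}$ uniformly in $s$ (again by $\alpha=2\mu$), I would split the $s$-integral at $s=t-(\log t)^2$. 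On the boundary layer $t-(\log t)^2\le s\le t$ the crude estimate $|\T{t-s}g_s(x)-\T{t-s}g_s(0)|\cleq 1+\norm x{}{2n}$ (uniform in $t-s$) gives a contribution $\cleq(\log t)^2\rbr{1+\norm x{}{2n}}=o(t)$ on $\cbr{\norm x{}{}\le t^l}$; on the bulk $0\le s\le t-(\log t)^2$ the semigroup $\T{t-s}$ has relaxed far enough that the mean value theorem for its Gaussian kernel, together with $\norm{xe^{-\mu(t-s)}}{}{}\le t^l e^{-\mu(\log t)^2}$, makes $|\T{t-s}g_s(x)-\T{t-s}g_s(0)|$ exponentially small in $(\log t)^2$, which absorbs the polynomial growth of $x$, so this part $\conv0$. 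Together with the $x=0$ case this proves \eqref{eq:criticalBranchingUniformConvergence}.

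Finally, \eqref{eq:criticalBranchingFourthMoment} follows by the same bootstrap through \eqref{eq:generalEquation} as \eqref{eq:fourthMomentEstimate}, carrying along powers of $t$: the $I_1$-computation with polynomial growth retained gives $|V_{\tilde f}^2(x,t)|\cleq t\,e^{\alpha t}\rbr{1+\norm x{}{2n}}$; feeding $|V_{\tilde f}^1V_{\tilde f}^2|$ into the $k=3$ case of \eqref{eq:generalEquation} (the $u^{*}$-terms being lower order by \eqref{eq:tildeEstimate}) and using \eqref{eq:mean} and Fact~\ref{fac:decay1} gives $|V_{\tilde f}^3(x,t)|\cleq t\,e^{(3\alpha/2)t}\rbr{1+\norm x{}{m}}$; feeding then $|V_{\tilde f}^1V_{\tilde f}^3|+|V_{\tilde f}^2|^2$ into the $k=4$ case gives $|V_{\tilde f}^4(x,t)|\cleq t^{2}e^{2\alpha t}\rbr{1+\norm x{}{m'}}$. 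I expect the main obstacle to be \eqref{eq:criticalBranchingUniformConvergence}: one divides by $t$ while letting $\norm x{}{}$ grow like $t^{l}$, so the boundary-layer contribution to the $s$-integral must be shown to be $o(t)$, and that is exactly what forces $l$ below the reciprocal of twice the polynomial degree of $f$ in Fact~\ref{fac:decay1}.
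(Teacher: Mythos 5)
Your proposal is correct and follows essentially the same route as the paper: the same bound on $V_{\tilde f}^1$ via \eqref{eq:mean}, the same split of $e^{-\alpha t}V_{\tilde f}^2$ into a leading piece carrying $(\T s^\alpha\tilde f)^2$ and negligible $u^*$-pieces with comparison to $(\cdot\circ\ddp{\grad f}{\eq})^2$ via \eqref{eq:decaySpeed}, the same boundary-layer/bulk decomposition and Gaussian-kernel perturbation argument for the uniform convergence, and the same bootstrap through \eqref{eq:generalEquation} carrying factors of $t$ for the fourth moment. The only cosmetic deviation is your choice of cutoff $t-(\log t)^2$ where the paper uses $t-t^{1/2}$; both produce the required $o(t)$ boundary-layer contribution and exponentially small bulk error.
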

\begin{proof}
\eqref{eq:criticalBranchingMoment1} follows by \eqref{eq:mean} and
Fact \ref{fac:decay1}. Indeed
\[
t^{-1/2}e^{-(\alpha/2)t}V_{\tilde{f}}^{1}(x,t)=e^{-(\alpha/2)t}t^{-1/2}e^{\alpha t}|\T t\tilde{f}(x)|\leq t^{-1/2}e^{(\alpha/2)t}e^{-\mu t}(1+\norm x{}{2n})\conv0.
\]
Using \eqref{eq:secondMoment} we have 
\begin{multline*}
t^{-1}e^{-\alpha t}V_{\tilde{f}}^{2}(x,t)=e^{-\alpha t}t^{-1}\frac{\beta}{\alpha}\int_{0}^{t}\T{t-s}^{\alpha}\sbr{2\beta\rbr{\T s^{\alpha}\tilde{f}(\cdot)}^{2}-2\beta\rbr{\T s^{-\alpha}\tilde{f}(\cdot)}^{2}-2\alpha u_{\tilde{f}}^{*,2}(\cdot,s)}(x)\dd s\\
=t^{-1}\frac{\beta}{\alpha}\int_{0}^{t}e^{-\alpha s}\T{t-s}\sbr{2\beta\rbr{\T s^{\alpha}\tilde{f}(\cdot)}^{2}-2\beta\rbr{\T s^{-\alpha}\tilde{f}(\cdot)}^{2}-2\alpha u_{\tilde{f}}^{*,2}(\cdot,s)}(x)\dd s\\
=t^{-1}\frac{\beta}{\alpha}\int_{0}^{t}\T{t-s}\sbr{2\beta\rbr{\T s^{\mu}\tilde{f}(\cdot)}^{2}}(x)\dd s+t^{-1}\frac{\beta}{\alpha}\int_{0}^{t}e^{-\alpha s}\T{t-s}\sbr{-2\beta\rbr{\T s^{-\alpha}\tilde{f}(\cdot)}^{2}-2\alpha u_{\tilde{f}}^{*,2}(\cdot,s)}(x)\dd s.
\end{multline*}
By Fact \ref{fac:misc1} it is easy to check that the second term
converges to $0$. Using Fact \ref{fac:decay1} (and \eqref{eq:decaySpeed}
in particular) we have
\begin{multline*}
\left|t^{-1}\frac{\beta}{\alpha}\int_{0}^{t}\T{t-s}\sbr{2\beta\rbr{\T s^{\mu}\tilde{f}(\cdot)}^{2}}(x)\dd s-t^{-1}\frac{\beta}{\alpha}\int_{0}^{t}\T{t-s}\sbr{2\beta\rbr{\cdot\circ\ddp{\grad f}{\eq}}^{2}}(x)\dd s\right|\\
\cleq t^{-1}\int_{0}^{t}\T{t-s}\sbr{\left|\T s^{\mu}\tilde{f}(\cdot)-\rbr{\cdot\circ\ddp{\grad f}{\eq}}\right|\left|\T s^{\mu}\tilde{f}(\cdot)+\rbr{\cdot\circ\ddp{\grad f}{\eq}}\right|}(x)\dd s\\
\cleq t^{-1}\int_{0}^{t}\T{t-s}\sbr{e^{-\mu s}(1+\norm{\cdot}{}n)(1+\norm{\cdot}{}n)}(x)\dd s\cleq t^{-1}(1+\norm x{}n)\conv0.
\end{multline*}
Using standard integral arguments one easily shows that $t^{-1}\frac{\beta}{\alpha}\int_{0}^{t}\T{t-s}\sbr{2\beta\rbr{\cdot\circ\ddp{\grad f}{\eq}}^{2}}(x)\dd s\conv\sigma_{f}^{2}$
and that $\sigma_{f}^{2}<+\infty$. 

Now in order to prove \eqref{eq:criticalBranchingUniformConvergence}
it is enough to show 
\[
\sup_{\norm x{}{}\leq t^{l}}t^{-1}e^{-\alpha t}|V_{\tilde{f}}^{2}(x,t)-V_{\tilde{f}}^{2}(0,t)|\conv0,\quad\text{as }t\conv+\infty.
\]
We will use the same decomposition as before and concentrate only
on the first term (leaving other, simpler ones, to the reader), that
is, we will show that 
\[
\sup_{\norm x{}{}\leq t^{l}}t^{-1}\left|\int_{0}^{t}e^{-\alpha s}\T{t-s}\sbr{2\beta\rbr{\T s^{\alpha}\tilde{f}(\cdot)}^{2}}(x)\dd s-\int_{0}^{t}e^{-\alpha s}\T{t-s}\sbr{2\beta\rbr{\T s^{\alpha}\tilde{f}(\cdot)}^{2}}(0)\dd s\right|\conv0.
\]
By Fact \ref{fac:decay1} 
\begin{multline*}
\sup_{\norm x{}{}\leq t^{l}}t^{-1}\left|\int_{t-t^{-1/2}}^{t}e^{-\alpha s}\T{t-s}\sbr{2\beta\rbr{\T s^{\alpha}\tilde{f}(\cdot)}^{2}}(x)\dd s\right|\cleq\\
\sup_{\norm x{}{}\leq t^{l}}t^{-1}\left|\int_{t-t^{-1/2}}^{t}e^{(\alpha-2\mu)s}\T{t-s}\sbr{\rbr{(1+\norm{\cdot}{}n)}^{2}}(x)\dd s\right|\cleq t^{-1}t^{1/2}t^{nl}\conv0,
\end{multline*}
if only $l$ is small enough. Further we proceed as in \eqref{eq:kurnik}
putting $t-t^{1/2}$ instead of $t/2$.

To prove \eqref{eq:criticalBranchingFourthMoment} we will follow
the same route as in the proof of Fact \ref{fac:slowBranchingMoments};
let us recall \eqref{eq:generalEquation-1}. Analogously we omit terms
$u_{f}^{*,k}$. For $k=3$ we calculate 
\begin{multline*}
\int_{0}^{t}\T{t-s}^{\alpha}\sbr{\left|V_{\tilde{f}}^{2}(\cdot,s)V_{\tilde{f}}^{1}(\cdot,s)\right|}(x)\dd s\leq e^{\alpha t}\intc t\T{t-s}e^{-\alpha s}\sbr{\left|V_{\tilde{f}}^{2}(\cdot,s)e^{(\alpha-\mu)s}(1+\norm{\cdot}{}n)\right|}(x)\dd s\\
\leq e^{\alpha t}\intc t\T{t-s}e^{-\mu s}\sbr{\left|se^{\alpha s}(1+\norm{\cdot}{}{2n})(1+\norm{\cdot}{}n)\right|}(x)\dd s\cleq e^{(3\alpha/2)t}t(1+\norm{\cdot}{}{3n}).
\end{multline*}
We proceed to $k=4$. Again following the framework of Fact \ref{fac:slowBranchingMoments}
we write 
\begin{multline*}
\int_{0}^{t}\T{t-s}^{\alpha}\sbr{\left|V_{\tilde{f}}^{3}(\cdot,s)V_{\tilde{f}}^{1}(\cdot,s)\right|}(x)\dd s+\int_{0}^{t}\T{t-s}^{\alpha}\sbr{\left|V_{\tilde{f}}^{2}(\cdot,s)\right|^{2}}(x)\dd s\\
\leq\int_{0}^{t}\T{t-s}^{\alpha}\sbr{\left|se^{(3\alpha/2)s}(1+\norm{\cdot}{}{3n})e^{(\alpha-\mu)s}(1+\norm{\cdot}{}{3n})\right|}(x)\dd s\\
+\int_{0}^{t}\T{t-s}^{\alpha}\sbr{\left|se^{\alpha s}(1+\norm{\cdot}{}{2n})\right|^{2}}(x)\dd s\cleq t^{2}e^{2\alpha t}(1+\norm{\cdot}{}{4n}).
\end{multline*}

\end{proof}
Finally we are able to prove Theorem \ref{thm:criticalBranching}.
We start with the following random vector 
\[
Z_{1}(t):=\rbr{e^{-\alpha t}|\Lambda_{t}|,e^{-(\alpha/2)t}(|\Lambda_{t}|-e^{\alpha t}V_{\infty}),t^{-1/2}e^{-(\alpha/2)t}\ddp{\Lambda_{t}}{\tilde{f}}}.
\]
Let $n\in\mathbb{N}$ to be fixed later; the limit of $Z_{1}(nt)$
is obviously the same as the on of $Z_{1}(t)$. Using Fact \ref{fac:totalMassMartingale}
we obtain that $V_{nt}-V_{t}\conv0$ a.s. Therefore the limit of $Z_{1}(t)$
coincides with the one of 
\[
Z_{2}^{n}(t):=\rbr{e^{-\alpha t}|\Lambda_{t}|,e^{-(n\alpha/2)t}(|\Lambda_{nt}|-e^{n\alpha t}V_{\infty}),(nt)^{-1/2}e^{-(n\alpha/2)t}\ddp{\Lambda_{nt}}{\tilde{f}}}.
\]
We denote $Z_{t}^{n,i}:=\rbr{\frac{n-1}{n}}^{1/2}((n-1)t)^{-1/2}e^{-((n-1)\alpha/2)t}\ddp{\Gamma_{(n-1)t}^{i,t}}{\tilde{f}}$
and $z_{t}^{n,i}:=\ev{}\rbr{Z_{t}^{n,i}|z_{t}}$. We keep the notation
consistent to the one in the proof of Theorem \ref{thm:smallBranching}
but we add superscript $^{n}$ as this parameter will vary. Using
argument of the aforementioned proof we have that the limit of $Z_{2}^{n}$
is the same as the one of
\[
Z_{3}^{n}(t):=\rbr{e^{-\alpha t}|\Lambda_{t}|,e^{(n\alpha/2)t}\sum_{i=1}^{\lfloor|\Lambda_{nt}|\rfloor}(1-V_{\infty}^{i}),e^{-(\alpha/2)t}\sum_{i=1}^{|z_{t}|}(Z_{t}^{n,i}-z_{t}^{n,i})+H_{t}^{n}},
\]
where $H_{t}^{n}:=e^{-(\alpha/2)t}\sum_{i=1}^{|z_{t}|}z_{t}^{n,i}$.
Applying \eqref{eq:mean} to $z_{t}^{n,i}$ we obtain 
\[
H_{t}^{n}=2\frac{\beta}{\alpha}(nt)^{-1/2}e^{-((n\alpha/2)t}\sinh((n-1)\alpha t)\sum_{i=1}^{|z_{t}|}\T{(n-1)}\tilde{f}(z_{t}(i)).
\]
Using \eqref{eq:secondMoment} we get 
\[
\ev{}\rbr{H_{t}^{n}}^{2}\cleq(nt)^{-1}e^{\alpha(n-2)t}V_{g}^{2}(x,t),
\]
where $g(x):=\T{(n-1)t}\tilde{f}(x).$ By \eqref{eq:superpocesMoment1}
we have $|u_{f}^{*,1}(x,s)|=|e^{-\alpha s}\T{(n-1)t+s}\tilde{f}(x)|\leq e^{-\alpha s}e^{-\mu((n-1)t+s)}(1+\norm x{}{2n})\leq e^{-\alpha s}e^{-\mu(n-1)t}(1+\norm x{}{2n})$.
As usual $u_{f}^{*,1}$ can be skipped. Further
\begin{multline*}
\int_{0}^{t}\T{t-s}^{\alpha}\sbr{\rbr{\T s^{\alpha}g(\cdot)}^{2}}(x)\dd s=e^{\alpha t}\int_{0}^{t}e^{\alpha s}\T{t-s}\sbr{\rbr{\T{s+(n-1)t}\tilde{f}(\cdot)}^{2}}(x)\dd s\\
\cleq e^{\alpha t}\int_{0}^{t}e^{\alpha s}\T{t-s}\sbr{\rbr{e^{-\mu(s+(n-1)t)}(1+\norm{\cdot}{}{2n}}^{2}}(x)\dd s\\
=e^{-\alpha(n-2)t}\intc t\T{t-s}\sbr{\rbr{(1+\norm{\cdot}{}{2n}}^{2}}(x)\dd s\cleq te^{-\alpha(n-2)t}.\\
\end{multline*}
Therefore there exists a constant $C>0$ such that 
\begin{equation}
\ev{}\rbr{H_{t}^{n}}^{2}\leq\frac{C}{n}.\label{eq:hnt}
\end{equation}
In fact also the reverse inequality holds, hence $H^{n}$ is not negligible
(unlike in the proof of Theorem \ref{thm:smallBranching}). We skip
it for a moment and denote 
\[
Z_{4}^{n}(t):=\rbr{e^{-\alpha t}|\Lambda_{t}|,e^{(n\alpha/2)t}\sum_{i=1}^{\lfloor|\Lambda_{nt}|\rfloor}(1-V_{\infty}^{i}),e^{-(\alpha/2)t}\sum_{i=1}^{|z_{t}|}(Z_{t}^{n,i}-z_{t}^{n,i})}.
\]
Further, we consider 
\begin{multline*}
I(t):=\ev{}\rbr{e^{-(\alpha/2)t}\sum_{i=1}^{|z_{t}|}(Z_{t}^{n,i}-z_{t}^{n,i})1_{\cbr{\norm{z_{t}(i)}{}{}\geq t^{l}}}}^{2}\\
=e^{-\alpha t}\ev{}\rbr{\sum_{i=1}^{|z_{t}|}\sum_{j=1}^{|z_{t}|}\ev{}\rbr{(Z_{t}^{n,i}-z_{t}^{n,i})(Z_{t}^{n,j}-z_{t}^{n,j})1_{\cbr{\norm{z_{t}(i)}{}{}\geq t^{l}}}1_{\cbr{\norm{z_{t}(j)}{}{}\geq t^{l}}}|z_{t}}}\\
=e^{-\alpha t}\ev{}\rbr{\sum_{i=1}^{|z_{t}|}\ev{}\rbr{(Z_{t}^{n,i}-z_{t}^{n,i})^{2}|z_{t}}1_{\cbr{\norm{z_{t}(i)}{}{}\geq t^{l}}}}=e^{-\alpha t}\ev{}\rbr{\sum_{i=1}^{|z_{t}|}\ev{}\rbr{(Z_{t}^{n,i})^{2}|z_{t}}1_{\cbr{\norm{z_{t}(i)}{}{}\geq t^{l}}}}\\
+e^{-\alpha t}\ev{}\rbr{\sum_{i=1}^{|z_{t}|}(z_{t}^{n,i})^{2}1_{\cbr{\norm{z_{t}(i)}{}{}\geq t^{l}}}}=:I_{1}(t)+I_{2}(t).
\end{multline*}
Our aim is to prove that both $I_{1}(t)\conv0,\: I_{2}(t)\conv0$
and hence $I(t)$ is negligible. Let us recall \eqref{eq:mean}, Fact
\ref{fac:decay1} and write 
\begin{multline*}
I_{2}(t)\cleq t^{-1}e^{-n\alpha t}\ev{}\rbr{\sum_{i=1}^{|z_{t}|}(e^{(n-1)\alpha t}\T{(n-1)t}\tilde{f}(z_{t}(i))^{2}1_{\cbr{\norm{z_{t}(i)}{}{}\geq t^{l}}}}\\
\cleq t^{-1}e^{(n-2)\alpha t}\ev{}\rbr{\sum_{i=1}^{|z_{t}|}(e^{-(n-1)\mu t}(1+\norm{z_{t}(i)}{}k))^{2}1_{\cbr{\norm{z_{t}(i)}{}{}\geq t^{l}}}}\\
\cleq t{}^{-1}e^{-\alpha t}\ev{}\rbr{\sum_{i=1}^{|z_{t}|}(1+\norm{z_{t}(i)}{}{2k})1_{\cbr{\norm{z_{t}(i)}{}{}\geq t^{l}}}}\\
=t{}^{-1}\ev{}\rbr{(1+\norm{z_{t}(1)}{}{2k})1_{\cbr{\norm{z_{t}(1)}{}{}\geq t^{l}}}}\conv0.
\end{multline*}
To estimate $I_{1}$, we recall \eqref{eq:secondMoment} and write
\[
\ev{}\rbr{\ddp{\Gamma_{(n-1)t}^{i,t}}{\tilde{f}}^{2}|z_{t}}=\frac{\beta}{\alpha}\int_{0}^{(n-1)t}\T{(n-1)t-s}^{\alpha}\sbr{2\beta\rbr{\T s^{\alpha}\tilde{f}(\cdot)}^{2}-2\beta\rbr{u_{\tilde{f}}^{*,1}(\cdot,s)}^{2}-2\alpha u_{\tilde{f}}^{*,2}(\cdot,s)}(z_{t}(i))\dd s.
\]
As usual it is enough to focus on the first term 
\begin{multline*}
\frac{\beta}{\alpha}\int_{0}^{(n-1)t}\T{(n-1)t-s}^{\alpha}\sbr{2\beta\rbr{\T s^{\alpha}\tilde{f}(\cdot)}^{2}}(z_{t}(i))\dd s\\
\cleq e^{(n-1)\alpha t}\int_{0}^{(n-1)t}\T{(n-1)t-s}e^{-\alpha s}\sbr{\rbr{e^{\alpha s}e^{-\mu s}(1+\norm{\cdot}{}k)}^{2}}(z_{t}(i))\dd s\\
e^{(n-1)\alpha t}\int_{0}^{(n-1)t}\T{(n-1)t-s}\sbr{\rbr{(1+\norm{\cdot}{}k)}^{2}}(z_{t}(i))\dd s\cleq e^{(n-1)\alpha t}(n-1)t(1+\norm{z_{t}(i)}{}{2k}).
\end{multline*}
Therefore 
\[
\ev{}\rbr{(Z_{t}^{n,i})^{2}|z_{t}}=(nt)^{-1}e^{((n-1)\alpha)t}\ev{}\rbr{\ddp{\Gamma_{(n-1)t}^{i,t}}{\tilde{f}}^{2}|z_{t}}\cleq1+\norm{z_{t}(i)}{}{2k}.
\]
Using the fact that $z_{t}(1)$ is Gaussian with bounded variance
we conclude 
\begin{multline*}
I_{1}(t)\cleq e^{-\alpha t}\ev{}\sum_{i=1}^{|z_{t}|}(1+\norm{z_{t}(i)}{}{2k})1_{\cbr{\norm{z_{t}(i)}{}{}\geq t^{l}}}=\ev{}(1+\norm{z_{t}(1)}{}{2k})1_{\cbr{\norm{z_{t}(1)}{}{}\geq t^{l}}}\\
\cleq\rbr{\ev{}(1+\norm{z_{t}(1)}{}{4k})}^{1/2}\pr{\norm{z_{t}(1)}{}{}\geq t^{l}}^{1/2}\conv0.
\end{multline*}
Therefore the limit of $Z_{4}^{n}$ is the same as the one of 
\[
Z_{5}^{n}(t):=\rbr{\frac{\beta}{\alpha}e^{-\alpha t}|z_{t}|,\lfloor|X_{nt}|\rfloor{}^{-1/2}\sum_{i=1}^{\lfloor|X_{nt}|\rfloor}(1-V_{\infty}^{i}),|z_{t}|^{-1/2}\sum_{i=1}^{|z_{t}|}(Z_{t}^{n,i}-z_{t}^{n,i})1_{\cbr{\norm{z_{t}(i)}{}{}<t^{l}}}}.
\]
Further we proceed along the lines of the proof of Theorem \ref{thm:smallBranching}
using Fact \ref{fac:criticalBranchingMoments} instead of Fact \ref{fac:slowBranchingMoments}.
There are some minor changes. We describe two of them which are not
completely obvious. \eqref{eq:helperEstimation} now writes as
\[
|\tilde{z}_{m}^{n,i}|\cleq m{}^{-1/2}e^{((n-1)\alpha/2)m}|\T{(n-1)m}\tilde{f}(p_{i}(m))|\cleq m^{-1/2}m^{2kl}e^{(n-1)(\alpha/2-\mu)m}=m^{2kl-1/2}.
\]
Decreasing $l$ if necessary, let us note that any other part of the
proof works for any {}``small'' $l$, we make $2kl-1/2<0$ and therefore
get the convergence to $0$. The second change is that 
\[
a_{m}^{-1}\sum_{i=1}^{a_{m}}\ev{}\rbr{\tilde{Z}_{m}^{n,i}}^{2}\conv\rbr{\frac{n-1}{n}}\sigma_{f}^{2}.
\]
The additional term $\rbr{\frac{n-1}{n}}^{1/2}$ stems from the corresponding
term in definition of $Z_{t}^{n,i}$. Finally we arrive at 
\begin{equation}
Z_{5}^{n}(t)\conv^{d}\rbr{V_{\infty},\sqrt{V_{\infty}}G_{1},\rbr{\frac{n-1}{n}}^{1/2}\sqrt{V_{\infty}}G_{2}},\quad\text{as }t\conv+\infty,\label{eq:limit}
\end{equation}
conditionally on $Ext^{c}$, where $G_{1},G_{2}$ are the same as
in Theorem \ref{thm:fastBranching}.

To finish the proof we need some topological notions. Let $\mu_{1},\mu_{2}$
be two probability measures on $\R$, and $\text{Lip}(1)$ be the
space of continuous functions $\R\mapsto[-1,1]$ with the Lipschitz
constant smaller or equal to $1$. We define
\[
m(\mu_{1},\mu_{2}):=\sup_{g\in\text{Lip}(1)}|\ddp g{\mu_{1}}-\ddp g{\mu_{2}}|.
\]
It is well known that $m$ is a distance equivalent to weak convergence.
One easily checks that when $\mu_{1},\mu_{2}$ correspond to two random
variables $X_{1},X_{2}$ on the same probability space then we have
\[
m(\mu_{1},\mu_{2})\leq\norm{X_{1}-X_{2}}1{}\leq\sqrt{\norm{X_{1}-X_{2}}2{}}.
\]
We denote the law of of the triple the limit \eqref{eq:limit} by
$\mathcal{L}_{n}$ and the law of $(W,\sqrt{W}G_{1},\sqrt{W}G_{2})$
by $\mathcal{L}_{\infty}$. Let us fix $\epsilon>0$. We choose $n$
such that $\sqrt{C/n}\le\varepsilon$, where $C$ is the same as in
\eqref{eq:hnt}. Hence, for any $t$ we have $m(Z_{4}^{n}(t),Z_{3}^{n}(t))\leq\epsilon$. 

By the fact that $Z_{5}^{n}$ and $Z_{4}^{n}$ have the same limit
there exist $T_{1}^{n}>0$ such that for any $t>T_{1}^{n}$ we have
$m(Z_{4}^{n}(t),\mathcal{L}_{n})\leq\epsilon$. Analogously $Z_{1}$
has the same limit as $Z_{3}^{n}$ therefore there exists $T_{2}^{n}$
such that for any $t>T_{2}^{n}$ we have $m(Z_{1}(t),Z_{3}^{n}(t))\leq\epsilon$.
Applying the triangle inequality we obtain 
\[
m(Z_{1}(t),\mathcal{L}_{\infty})\leq3\epsilon,
\]
if only $t\geq T_{1}^{n}\vee T_{2}^{n}$. The proof is concluded since
$\epsilon$ can be taken arbitrarily small.

\subsection*{Appendix}
\begin{proof}
(of Fact \eqref{fac:totalMassMartingale}) Using \eqref{eq:logLaplace}
and \eqref{eq:integralEq} one easily checks that 
\[
\mathbb{E}_{\nu}(e^{-\theta|X_{t}|})=\exp\rbr{-|\nu|v_{\theta}(t)},
\]
where 
\[
v_{\theta}'(t)=\alpha v_{\theta}(t)-\beta v_{\theta}(t)^{2},\quad v_{\theta}(0)=\theta.
\]
This equation can be solved explicitly, viz.
\begin{equation}
v_{\theta}(t)=\frac{e^{t\alpha}\alpha}{C_{\theta}+e^{t\alpha}\beta},\quad C_{\theta}:=\frac{\alpha-\beta\theta}{\theta}.\label{eq:totalMassSolution}
\end{equation}
By direct calculations we obtain $\ev{}_{\nu}|X_{t}|=e^{\alpha t}|\nu|$.
Hence, using the Markov property of $X$ we conclude that $V_{t}$
is a martingale.

One checks that $\lim_{t\conv+\infty}v_{\theta}(t)=\alpha/\beta$.
As the limit does not depend on $\theta$ we conclude that $|X_{t}|$
either diverges up to infinity or converges to $0$. By definition
the latter event is the same as $Ext$ (see \eqref{eq:extenquished})
thus we conclude that 
\[
\mathbb{P}_{\nu}(Ext)=\exp\cbr{-|\nu|\frac{\alpha}{\beta}}.
\]
Let $L(\theta;t):=-\log\mathbb{E}_{\nu}(e^{-\theta e^{-\alpha t}|X_{t}|})/|\nu|.$
We calculate
\[
L(\theta;t)=\frac{e^{t\alpha}\alpha}{\frac{\alpha-\beta\theta e^{-\alpha t}}{\theta e^{-\alpha t}}+e^{t\alpha}\beta}=\frac{e^{t\alpha}\alpha}{\alpha\theta^{-1}e^{\alpha t}-\beta+e^{t\alpha}\beta}\conv\frac{\alpha}{\alpha\theta^{-1}+\beta},\quad\text{as }t\conv+\infty.
\]
Now we check that $\frac{\alpha}{\alpha\theta^{-1}+\beta}\conv\alpha/\beta$
as $\theta\conv+\infty$, hence $\cbr{V_{\infty}=0}=Ext$. One may
verify representation \eqref{eq:LimitRep} by direct calculations
(although it is easier guessed from the backbone construction). In
order to prove $L^{2}$ convergence one needs to apply \eqref{eq:superprocesMoment2}
with $f=1$. 
\end{proof}

\begin{proof}
(of Fact \ref{fac:martingaleEquvalence}) The fact that $W$ is a
martingale is a well-known fact from the Galton-Watson theory (see
e.g.\citep{Athreya:2004xr}). The properties of $I$ are proved in
\citep{Adamczak:2011kx} in Section 3.4.3 (where it is under name
$H$). Having $a.s$ convergence of $H$ and $I$ in order to prove
\eqref{eq:martingaleEquiv2} one needs to show that 
\[
H_{nt}-\frac{\beta}{\alpha}I_{t}\conv0,
\]
 in probability. We denote $id(x)=x$ and fix some $n>2$. Using \eqref{eq:fundamentalDecomposition}
we obtain
\begin{multline*}
H_{nt}-\frac{\beta}{\alpha}I_{t}\\=e^{-n(\alpha-\mu)t}\ddp{X_{nt}^{0}}{id}+e^{-n(\alpha-\mu)t}\ddp{D_{(n-1)t}^{t}}{id}+e^{-(\alpha-\mu)t}\sum_{i=1}^{|z_{t}|}e^{-n(\alpha-\mu)t}\ddp{\Gamma_{(n-1)t}^{i,t}}{id}-e^{-(\alpha-\mu)t}\sum_{i=1}^{|z_{t}|}\frac{\beta}{\alpha}z_{t}(i)\\
=e^{-n(\alpha-\mu)t}\ddp{X_{nt}^{0}}{id}+e^{-n(\alpha-\mu)t}\ddp{D_{(n-1)t}^{t}}{id}+e^{-(\alpha-\mu)t}\sum_{i=1}^{|z_{t}|}\rbr{Z^{i,t}-z^{i,t}}+e^{-(\alpha-\mu)t}\sum_{i=1}^{|z_{t}|}\rbr{z^{i,t}-z_{t}(i)}\\
=:I_{1}(t)+I_{2}(t)+I_{3}(t)+I_{4}(n),
\end{multline*}
where $ $$Z^{i,t}:=e^{-(n-1)(\alpha-\mu)t}\ddp{\Gamma_{(n-1)t}^{i,t}}{id}$
and $z^{i,t}:=\ev{}(Z^{i,t}|z_{t})$. By \eqref{eq:tmp17} we have
\[
\ev{}_{\nu}|I_{2}(t)|\leq e^{-n(\alpha-\mu)t}\ev{}\ddp{D_{(n-1)t}^{t}}{|id|}\cleq e^{-n(\alpha-\mu)t}e^{(2-n)t}\conv0.
\]
Analogously using \eqref{eq:superpocesMoment1} (with $\psi^{*}$
as explained in Section \ref{sub:Backbone-construction})one can show
that $\ev{}_{\nu}^{*}|I_{1}(t)|\conv0$, at $t\conv+\infty$. In order
to treat $I_{3}$ we perform the same calculations as for \eqref{eq:covarianceCalculations}
and obtain 
\[
\ev{}I_{2}(t)^{2}=e^{-2(\alpha-\mu)t}\ev{}\rbr{\sum_{i=1}^{|z_{t}|}\ev{}\rbr{(Z^{i,t}-z^{i,t})^{2}|z_{t}}}.
\]
Using \eqref{eq:semigroupFirstEignenvalue} and Fact \ref{fac:backBoneFacts}
one checks that \textbackslash{}
\[
\norm{z^{i,t}}{}{}\cleq e^{-(n-1)(\alpha-\mu)t}\sinh((n-1)\alpha t)e^{-(n-1)\mu t}\norm{z_{t}(i)}{}{}\cleq\norm{z_{t}(i)}{}{}.
\]
Again by Fact \ref{fac:backBoneFacts} we have $\ev{}\rbr{(Z^{i,t})^{2}|z_{t}}\leq(1+\norm{z_{t}}{}{3n})$.
Therefore, under assumption $\alpha>2\mu$ we get 
\[
\ev{}I_{2}(t)^{2}\cleq e^{-2(\alpha-\mu)t}e^{\alpha t}=e^{(-\alpha+2\mu)t}\conv0.
\]
Finally, $z^{i,t}-z_{t}(i)=\frac{\beta}{\alpha}e^{-2(n-1)\alpha t}z_{t}(i)$
which implies that $I_{3}(t)\conv0$. These together yield that \eqref{eq:martingaleEquiv2}.
\eqref{eq:martingaleEquiv1} can be proven using a very similar, but
simpler, reasoning hence is skipped. 
\end{proof}
In the following proof we will need 
\begin{lem}
\label{lem:LaplacePositive}Let $X$ be an a.s. positive random variable.
If there exists an analytic function $w:(-\infty,a)\mapsto\R_{+}$,
$a>0$ such that 
\[
w(\theta)=\ev{e^{\theta X}},\quad\text{for }\theta\leq0,
\]
then \textup{$w(\theta)=\ev{e^{\theta X}}$ holds on $(-\infty,a_{0})$
for some $a_{0}>0$. }\end{lem}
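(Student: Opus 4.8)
The plan is to use the one-sided hypothesis to force \emph{all} moments of $X$ to be finite with geometrically controlled growth, and then to identify $\mathbb{E}(e^{\theta X})$ with the Taylor series of $w$ on a right neighbourhood of $0$. First I would set $\phi(\theta):=\mathbb{E}(e^{\theta X})$, which is finite for every $\theta\le 0$ since $X>0$ a.s.\ forces $0<e^{\theta X}\le 1$, and is exactly $w$ on $(-\infty,0]$ by hypothesis. A routine domination argument shows $\phi$ is smooth on $(-\infty,0)$ with $\phi^{(k)}(\theta)=\mathbb{E}(X^k e^{\theta X})$ for every $k\ge 0$ and every $\theta<0$: on a compact subinterval of $(-\infty,0)$ one dominates $X^k e^{\theta X}$ by $X^k e^{\theta' X}$ for a fixed $\theta'<0$, which is a bounded function of $X\ge 0$, so differentiation under the integral sign is legitimate.

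Next I would let $\theta\uparrow 0$. On the one hand, $w$ is analytic, hence $C^\infty$, on $(-\infty,a)$, so each $w^{(k)}$ is continuous at $0$ and, since $\phi^{(k)}=w^{(k)}$ on $(-\infty,0)$, we get $\phi^{(k)}(\theta)\to w^{(k)}(0)$ as $\theta\uparrow 0$. On the other hand $X^k e^{\theta X}\uparrow X^k$ pointwise as $\theta\uparrow 0$, so by monotone convergence $\phi^{(k)}(\theta)=\mathbb{E}(X^k e^{\theta X})\to\mathbb{E}(X^k)$. Comparing the two limits yields $\mathbb{E}(X^k)=w^{(k)}(0)<\infty$ for every $k\ge 0$; in particular all moments of $X$ are finite.

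Finally, since $w$ is real-analytic at $0$ there is $a_0\in(0,a]$ such that $w(\theta)=\sum_{k\ge 0}\frac{w^{(k)}(0)}{k!}\theta^k=\sum_{k\ge 0}\frac{\mathbb{E}(X^k)}{k!}\theta^k$, the series converging for $|\theta|<a_0$. For $\theta\in[0,a_0)$ every summand $\frac{\theta^k X^k}{k!}$ is nonnegative, so Tonelli's theorem gives $\sum_{k\ge 0}\frac{\theta^k\mathbb{E}(X^k)}{k!}=\mathbb{E}\!\left(\sum_{k\ge 0}\frac{(\theta X)^k}{k!}\right)=\mathbb{E}(e^{\theta X})$ as an identity in $[0,\infty]$; since the left-hand side is finite, $\mathbb{E}(e^{\theta X})=w(\theta)<\infty$ for $\theta\in[0,a_0)$. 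Together with the hypothesis on $(-\infty,0]$ this gives $w(\theta)=\mathbb{E}(e^{\theta X})$ on $(-\infty,a_0)$, which is the claim.

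The whole argument is elementary; the one step that deserves care is the passage to the limit $\theta\uparrow 0$ in the second paragraph, i.e.\ extracting finiteness of \emph{all} moments from analyticity on a merely one-sided interval, which is what upgrades the a priori potentially finite-only-at-$\theta\le 0$ Laplace transform into an honest exponential moment. A secondary, purely bookkeeping point is producing a concrete radius $a_0$ on which the Taylor series of $w$ at $0$ reproduces $w$; and positivity of $X$ is used essentially twice, both to make $\phi$ finite on $(-\infty,0]$ and to apply Tonelli in the last step.
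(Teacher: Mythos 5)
Your proof is correct and follows essentially the same route as the paper's: use the one-sided hypothesis plus monotone convergence to identify the Taylor coefficients of $w$ at $0$ with the moments $\ev{X^{k}}$, then use Tonelli/Fubini and the positivity of $X$ to sum the Taylor series back to $\ev{e^{\theta X}}$ on a right neighbourhood of $0$. Your write-up is a bit more explicit about justifying differentiation under the integral sign and the limit $\theta\uparrow 0$, but the underlying idea is identical.
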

\begin{proof}
There exists a sequence $\cbr{a_{n}}_{n\geq0}$ such that $w(\theta)=\sum_{n\geq0}\frac{a_{n}}{n!}\theta^{n}$
and the series is absolutely convergent in some interval $[-\epsilon,\epsilon]$.
For any $\lambda\in(-\epsilon,0)$ we have (we use the positivity
here): 
\[
w^{(n)}(\theta)=\ev{}X^{n}e^{\theta X}.
\]
Passing to the limit (which is valid by the monotone Lebesgue theorem)
we get 
\[
a_{n}=w^{(n)}(0)=\ev{}X^{n}.
\]
Therefore all the moments exists. Let us now take some $\theta\in(0,\epsilon)$
obviously we have
\[
w(\theta)=\lim_{k\rightarrow+\infty}\sum_{n=0}^{k}\frac{a_{n}}{n!}\theta^{n}=\lim_{k\rightarrow+\infty}\sum_{n=0}^{k}\frac{\ev{X^{n}}}{n!}\theta^{n}=\sum_{n=0}^{\infty}\ev{}\rbr{\frac{X^{n}}{n!}\theta^{n}}=\ev{}\sum_{n=0}^{\infty}\frac{X^{n}}{n!}\theta^{n}=\ev{}e^{\theta X},
\]
where we used the Fubini theorem. Extending the equality to the whole
negative axis follows by a standard argument. 
\end{proof}

\begin{fact}
\label{fact:exponentailMomentsofTotalMass}Let $T>0$, there exists
$\theta>0$ such that 
\[
\ev{e^{\theta|X_{t}|}}<+\infty,
\]
for any $t<T$.\end{fact}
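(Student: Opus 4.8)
The plan is to extract the exponential moment from the explicit formula for the Laplace transform of $|X_{t}|$ obtained in the proof of Fact \ref{fac:totalMassMartingale}, and to legalize the analytic continuation to a negative spectral parameter by means of Lemma \ref{lem:LaplacePositive}. Recall from \eqref{eq:totalMassSolution} that for $\lambda\geq0$ one has $\mathbb{E}_{\nu}(e^{-\lambda|X_{t}|})=\exp(-|\nu|v_{\lambda}(t))$ with $v_{\lambda}(t)=\alpha e^{t\alpha}/(C_{\lambda}+\beta e^{t\alpha})$ and $C_{\lambda}=(\alpha-\beta\lambda)/\lambda$. First I would perform the substitution $\lambda=-\theta$ and simplify: setting
\[
a(t):=\frac{\alpha}{\beta(e^{\alpha t}-1)},\qquad w_{t}(\theta):=\exp\rbr{\frac{|\nu|\,\alpha e^{\alpha t}\,\theta}{\alpha-\beta(e^{\alpha t}-1)\,\theta}},
\]
one checks directly that $w_{t}$ is real-analytic and strictly positive on $(-\infty,a(t))$, extends holomorphically to the disc $\cbr{|\theta|<a(t)}\subset\mathbb{C}$ with its unique singularity (the pole of the exponent) located at $\theta=a(t)$, and that $w_{t}(\theta)=\mathbb{E}_{\nu}(e^{\theta|X_{t}|})$ for every $\theta\leq0$ — the last point being nothing but the displayed formula read at the admissible parameter $\lambda=-\theta\geq0$.

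Then I would invoke Lemma \ref{lem:LaplacePositive} with $X=|X_{t}|$; although $|X_{t}|$ has an atom at $0$ and so is not a.s.\ strictly positive, the proof of that lemma only uses $X\geq0$. It yields that all moments $\mathbb{E}_{\nu}(|X_{t}|^{n})=w_{t}^{(n)}(0)$ are finite and that $w_{t}(\theta)=\mathbb{E}_{\nu}(e^{\theta|X_{t}|})$ on a right neighbourhood of $0$. Because $w_{t}$ is holomorphic on $\cbr{|\theta|<a(t)}$ and singular at $a(t)$, the Taylor series of $w_{t}$ at $0$ has radius of convergence exactly $a(t)$; hence $\sum_{n}\theta^{n}\mathbb{E}_{\nu}(|X_{t}|^{n})/n!<+\infty$ for $0\leq\theta<a(t)$, and by Tonelli this sum equals $\mathbb{E}_{\nu}(e^{\theta|X_{t}|})$, which is therefore finite on $[0,a(t))$. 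Finally $t\mapsto a(t)$ is strictly decreasing, so $a(t)>a(T)>0$ for all $t<T$, and any fixed $\theta\in(0,a(T))$ satisfies $\mathbb{E}_{\nu}(e^{\theta|X_{t}|})<+\infty$ for every $t<T$; this is the claim. (As any $f\in bp(\R^{d})$ is bounded, the same $\theta$, suitably rescaled, also gives $\mathbb{E}_{\nu}(e^{\theta\langle f,X_{t}\rangle})<+\infty$, which is the form in which the fact is used in Section \ref{sec:Proof-Preliminaires}.)

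The one genuinely delicate step is the passage from the negative to the nonnegative half-axis: a priori $\exp(-|\nu|v_{-\theta}(t))$ is only an analytic expression that happens to compute a Laplace transform for $\theta\leq0$, and there is no formal reason for it to be the moment generating function for $\theta>0$. Lemma \ref{lem:LaplacePositive}, fed with the nonnegativity of $|X_{t}|$, is precisely what closes this gap; and combining it with the elementary location of the singularity of $w_{t}$ at $a(t)$ is what turns the existence of \emph{some} admissible $\theta$ into an explicit range, making the choice uniform over $t<T$. Everything else is routine calculus with the rational exponent of $w_{t}$.
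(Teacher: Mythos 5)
Your proposal is correct and follows the same route as the paper: extract the exponential moment from the explicit Laplace-transform formula \eqref{eq:totalMassSolution}, observe that $w_t$ is analytic in a complex neighbourhood of $0$ whose radius $a(t)=\alpha/(\beta(e^{\alpha t}-1))$ is bounded below uniformly for $t<T$, and invoke Lemma~\ref{lem:LaplacePositive} to legalize the continuation to positive $\theta$. You merely make explicit what the paper leaves as ``easy to check'' (the precise location of the pole and its monotonicity in $t$), and you correctly note that Lemma~\ref{lem:LaplacePositive} only needs nonnegativity of $|X_t|$, not strict positivity.
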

\begin{proof}
We denote $w(\theta,t):=\ev{e^{\theta|X_{t}|}}$, which so far is
well-defined, for $\theta<0$. Using \eqref{eq:totalMassSolution}
it is easy to check that for any $T>0$ there exists $\epsilon>0$
the functions above are analytic on $(-\epsilon,\epsilon)$ (one has
to ensure that the denominators are bounded away from $0$). Now the
conclusion holds by Lemma \ref{lem:LaplacePositive}.
\end{proof}

\section*{Acknowledgments}

The parts of this paper were written while the author enjoyed a kind
hospitality of the Probability Laboratory at Bath. The author wishes
to thank Simon Harris and Andreas Kyprianou for stimulating discussions. 

\bibliographystyle{plain}
\bibliography{/Users/piotrmilos/Dropbox/Sync/Library/branching}

\end{document}